\numberwithin{equation}{section}
\newcommand{\jap}[1]{\langle #1 \rangle}
\def\a{\alpha}
\def\b{\beta}
\def\c{\gamma}
\def\d{\delta}
\def\e{\varepsilon}
\def\f{\varphi}
\def\g{\psi}
\def\i{\mbox{\raisebox{.5ex}{$\chi$}}}
\def\l{\lambda}
\def\m{\mu}
\def\n{\nu}
\def\x{\xi}
\def\y{\eta}
\def\z{\zeta}
\newcommand{\G}{\Psi}
\renewcommand{\L}{\Lambda}
\renewcommand{\O}{\Omega}
\newcommand{\Op}{\mathrm{Op}}
\def\re{\mathbb{R}}
\def\co{\mathbb{C}}
\def\ze{\mathbb{Z}}
\def\pa{\partial}
\def\coloneqq{\mathrel{\mathop:}=}%
\renewcommand{\Im}{\text{{\rm Im}\;}}
\newcommand{\supp}{\text{{\rm supp}\;}}
\newcommand{\Ker}{\text{{\rm Ker}\;}}
\newcommand{\oi}{\overline\i}
\newcommand{\norm}[1]{\| #1 \|}
\newcommand{\bignorm}[1]{\bigl\| #1 \bigr\|}
\newcommand{\bigpare}[1]{\bigl(#1\bigr)}
\newcommand{\biggpare}[1]{\biggl(#1\biggr)}
\newcommand{\bigbra}[1]{\bigl\{#1\bigr\}}
\newcommand{\bigset}[2]{\bigl\{#1\bigm|#2\bigr\}}
\newcommand{\bigjap}[1]{\bigl\langle #1 \bigr\rangle}
\newcommand{\biggjap}[1]{\biggl\langle #1 \biggr\rangle}
\newcommand{\bigabs}[1]{\bigl| #1 \bigr|}
\newcommand{\biggabs}[1]{\biggl| #1 \biggr|}
\newtheorem{thm}{Theorem}[section]
\newtheorem{lem}[thm]{Lemma}
\newtheorem{prop}[thm]{Proposition}
\newtheorem{cor}[thm]{Corollary}
\theoremstyle{definition}
\newtheorem{ass}{Assumption}
\theoremstyle{remark}
\newtheorem{rem}[thm]{Remark}
\title[Essential self-adjointness of real principal type operators]{Essential self-adjointness of \\ real principal type operators}
\author{Shu Nakamura}
\address{Department of Mathematics, Faculty of Sciences, Gakushuin University, 1-5-1, Mejiro, Toshima, Tokyo, Japan 171-8588}
\email{shu.nakamura@gakushuin.ac.jp}
\author{Kouichi Taira}
\address{Graduate School of Mathematical Sciences, the University of Tokyo, 3-8-1 Komaba, Meguro, Tokyo, Japan 153-8914}
\email{taira@ms.u-tokyo.ac.jp}
\begin{document}
\maketitle

\begin{abstract}
We study the essential self-adjointness for real principal type differential operators. Unlike the elliptic case, 
we need geometric conditions even for operators on the Euclidean space with asymptotically constant 
coefficients, and we prove the essential self-adjointness under the null non-trapping condition.
\end{abstract}


\section{Introduction}
In this paper, we consider formally self-adjoint real principal type operator $P=\Op(p)$ on the Euclidean space $\re^n$ with $n\geq 1$, where $\Op(\cdot)$ denotes the Weyl quantization. 
A typical example is the Klein-Gordon operator with variable coefficients (see Remark~\ref{basicexample}), and 
the propagation of singularities plays an essential role in the proof of the essential self-adjointness. 

We suppose the symbol $p(x,\x)$ is real principal type with asymptotically constant coefficients in the following sense: 

\begin{ass}\label{assa}
Let $m\geq 2$, $p, p_m\in C^{\infty}(\re^{2n})$ and $p_0\in C^{\infty}(\re^n)$ be real-valued functions of the form
\[
p(x,\x)=\sum_{|\a|\leq m}a_{\a}(x)\x^{\a},\,\, p_m(x,\x)=\sum_{|\a|=m}a_{\a}(x)\x^{\a},\,\, p_0(\x)=\sum_{|\a|=m}b_{\a}\x^{\a}
\]
where $b_{\a}\in \re$ and $a_{\a}\in C^{\infty}(\re^n)$ such that for any multi-index $\a\in\ze_+^n$, 
\[
|\pa_{x}^{\b}(a_{\a}(x)-b_{\a})|\leq C_{\b}\jap{x}^{-\m-|\b|}, \quad x\in\re^n
\]
with some $\m>0$, where we set $b_{\a}=0$ for $|\a|\leq m-1$. Moreover, there exists $C>0$ such that
\[
C^{-1}|\x|^{m-1}\leq |\pa_{\x}p_0(\x)|\leq C|\x|^{m-1},\,\, C^{-1}|\x|^{m-1}\leq |\pa_{\x}p_m(x,\x)|\leq C|\x|^{m-1}
\]
for $(x,\x)\in \re^{2n}$.
\end{ass}

Let $(y(t), \y(t))=(y(t, x_0, \x_0),\y(t, x_0, \x_0))\in C^1(\re\times\re^{2n};\re^{2n})$ be the solution
to the Hamilton equation:
\[
\frac{d}{dt} y(t) =\frac{\pa p_m}{\pa \x}(y(t),\y(t)), \quad
\frac{d}{dt} \y(t) =-\frac{\pa p_m}{\pa x}(y(t),\y(t)), \quad
t\in\re, 
\]
with the initial condition: $(y(0),\y(0))=(x_0,\x_0)\in\re^{2n}$. We suppose the following null non-trapping condition: 

\begin{ass}\label{assnull}
For any $(x_0, \x_0)\in p_m^{-1}(0)$ with $\x_0\neq 0$, 
$| y(t, x_0, \x_0) | \to \infty$ as $|t|\to \infty$. 
\end{ass}

Our main theorem is the following: 

\begin{thm}\label{nullthm}
Suppose  Assumption \ref{assa} and \ref{assnull}. Then $P=\Op(p)$ is essentially self-adjoint on $C_c^\infty(\re^n)$. 
\end{thm}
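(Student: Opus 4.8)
The standard strategy: To prove essential self-adjointness of $P$ on $C_c^\infty(\re^n)$, it suffices to show that $\Ker(P^* \pm i) = \{0\}$, i.e., that any $u \in L^2(\re^n)$ satisfying $(P \mp i)u = 0$ in the distributional sense must vanish. Since $u \in L^2$ and $(P\mp i)u = 0$ with $P$ a differential operator of order $m$ with smooth coefficients, elliptic regularity away from the characteristic set gives $u \in C^\infty$ microlocally off $p_m^{-1}(0)$, and in fact the real part of the equation means $Pu = \pm i u \in L^2$, so $u \in H^m_{\mathrm{loc}}$... but the key difficulty is the behavior at spatial infinity, where the scattering/second-microlocal structure matters.

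So the plan is:

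First, I would set up a scattering (Melrose) microlocal framework on the radial compactification of $\re^n$, using the scattering wavefront set $\SWF$ defined via the calculus of scattering pseudodifferential operators $\mathrm{Op}(\cdot)$; the elliptic set $\Ell$ and the notion of the scattering characteristic variety should already be in play given the macros. The equation $(P \mp i)u = 0$ with $u \in L^2 = H^0$ forces, by scattering elliptic regularity, that $\SWF(u)$ is contained in the scattering characteristic set of $p$ — and because of the $\mp i$, actually in the intersection with the set where the "base" symbol $p$ (not just $p_m$) and the relevant boundary symbols vanish appropriately. The real principal type / non-degeneracy hypotheses in Assumption \ref{assa} (the lower bounds on $|\partial_\xi p_0|$ and $|\partial_\xi p_m|$) guarantee that $p$ is of real principal type both in the interior and at the scattering boundary face, so the only obstruction to propagation is along the null bicharacteristics.

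Second, I would invoke propagation of singularities for real principal type operators in the scattering calculus (Hörmander's theorem, in its scattering/Melrose incarnation à la Vasy): $\SWF(u)$ is invariant under the bicharacteristic flow of the principal symbol. Here Assumption \ref{assnull} enters crucially: since every null bicharacteristic of $p_m$ with $\xi_0 \neq 0$ escapes to spatial infinity ($|y(t)| \to \infty$), every bicharacteristic of the full scattering-rescaled symbol either reaches the boundary at infinity in finite (rescaled) time or, after compactification, limits onto a sink/source structure at the "corner" (fiber infinity over the boundary sphere). At these radial points one has radial point estimates (Melrose/Vasy): a threshold regularity dichotomy forces $\SWF(u)$ to be empty near the radial sets provided $u$ has enough a priori decay/regularity — and $u \in L^2$ together with the sign of the imaginary part $\mp i$ picks out the correct side of the threshold (this is the classical Vasy argument: the sign of $\Im$ determines whether one propagates regularity into the source or out of the sink, and for one choice of sign one gets a contradiction unless $u = 0$).

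Third, combining the propagation estimate with the radial point estimates along all bicharacteristics: any point of $\SWF(u)$ would have to propagate backward and forward to the radial sets at infinity; the non-trapping condition ensures the whole null bicharacteristic flow connects to those radial sets, and the threshold/sign condition then forces $\SWF(u) = \emptyset$ near fiber-and-base infinity, hence $u \in H^\infty$ with all scattering derivatives — in particular $u$ together with its decay, plus interior regularity, can be upgraded. Then a final step (a unique continuation / Carleman-type argument, or more simply an integration-by-parts energy identity exploiting $\mathrm{Im}\langle (P\mp i)u, u\rangle = \mp\|u\|^2 = 0$ once $u$ is smooth and decaying enough to justify it) yields $u = 0$. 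Running the argument for both signs $\mp i$ gives $\Ker(P^* - i) = \Ker(P^* + i) = \{0\}$, which is exactly essential self-adjointness.

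The main obstacle: The hard part is the analysis at spatial infinity — establishing the correct radial point estimates at the scattering "corner" (the boundary of fiber infinity over the sphere at spatial infinity) and checking that the non-trapping condition on the \emph{spatial} component $y(t)$ of the bicharacteristics, as stated in Assumption \ref{assnull}, is precisely what is needed to rule out trapped scattering bicharacteristics in the compactified phase space. One must verify that bicharacteristics do not accumulate on a trapped set \emph{inside} the scattering phase space (away from the radial sets) — this is where the geometry genuinely matters, unlike the elliptic case, and where the counterexamples alluded to in the abstract live. A careful reduction showing that spatial non-trapping $\Rightarrow$ scattering non-trapping, together with uniform propagation estimates, is the technical heart.
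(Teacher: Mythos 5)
Your plan is a correct route to the theorem, but it is essentially Vasy's proof (\cite{Va1}), which the paper explicitly contrasts with its own: the authors state their argument is ``considerably different from \cite{Va1}, relatively self-contained, and hopefully simpler.'' Where you invoke the Melrose scattering calculus, scattering wavefront set, propagation of singularities in the compactified phase space, and radial point estimates at the scattering corner, the paper replaces this package with two explicit constructions. First, local smoothness of a solution to $(P-i)u=0$ is obtained by a semiclassical commutator argument with a time-global escaping family $F(h,t)$ built directly from the Hamilton flow of $p_m$ (Subsection~\ref{local} and Appendices~\ref{appa}, \ref{appb}, in the spirit of \cite{N} and \cite{KPRV}); this is where Assumption~\ref{assnull} is used and it plays the role of your propagation-of-singularities step, with the translation ``spatial non-trapping $\Rightarrow$ escape of the full bicharacteristic'' handled concretely by Lemma~\ref{energy} and Corollary~\ref{asymmom}. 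Second, weighted Sobolev regularity near spatial infinity is obtained by constructing the explicit weight functions $b^\d$ and proving a positive-commutator inequality (Lemmas~\ref{radcl}--\ref{radlem}); this is the paper's elementary substitute for your radial-point estimate and deliberately avoids the operator-theoretic Mourre framework (which is unavailable since self-adjointness is not yet known) as well as the full scattering calculus. Your final ``energy identity'' step is the paper's cutoff commutator computation with $\Op(\g_R)$, which requires precisely the threshold $u\in H^{(m-1)/2,-1/2}$ that the two regularity steps supply. So the two approaches target the same phenomena (elliptic regularity, propagation along null bicharacteristics, threshold behavior at infinity governed by the sign of $\Im z$), but the paper trades the black-boxed scattering microlocal machinery for self-contained explicit constructions; the tradeoff is that the paper's Theorem~\ref{rad} gains only half a derivative per iteration and must be bootstrapped, whereas the Vasy/scattering route gives $u\in H^\infty_{\mathrm{sc}}$ in one sweep once the radial-point machinery is set up.
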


\begin{rem}\label{basicexample}
(Klein-Gordon operators on asymptotically Minkowski spaces) 
Let $g_0$ be the Minkowski metric on $\re^n$: $g_0=dx_1^2-dx_2^2-...-dx_n^2$ and 
$g^{-1}=(g^{ij}_0)_{i,j=1}^n$ be its dual metric. 
A Lorentzian metric $g$ on $\re^n$ is called asymptotically Minkowski if $g^{-1}(x)=(g^{ij}(x))_{i,j=1}^n$ satisfies, 
for any $\a\in\ze_+^n$ there is $C_\a>0$ such that 
\[
|\pa_{x}^{\a}(g^{ij}(x)- g_0^{ij})|\leq C_{\a}\jap{x}^{-\m-|\a|},\quad x\in\re^n, 
\]
with some $\m>0$. Suppose  $V(x), A_j(x)\in C^\infty(\re^n;\re)$, $j=1,\dots, n$,  such that 
\[
|\pa_{x}^{\a}V(x)|\leq C_{\a}\jap{x}^{-\m-|\a|}, \quad
|\pa_{x}^{\a}A_j(x)|\leq C_{\a}\jap{x}^{-\m-|\a|}, 
\quad x\in\re^n, 
\]
for any $\a\in\ze_+^n$ . 
Then the symbol 
\[
p(x,\x)=\sum_{j,k=1}^ng^{jk}(x)(\x_j-A_j(x))(\x_k-A_k(x)) +V(x)
\]
satisfies Assumption \ref{assa}. 
The essential self-adjointness for this model is studied by Vasy~\cite{Va1}. 
\end{rem}

\begin{rem}
In this paper, we only deal with operators with order greater than $1$.  
The essential self-adjointness of first order operators on $C_c^{\infty}(\re^n)$ can be proved by
 Nelson's commutator theorem with its conjugate operator $N=-\Delta+|x|^2+1$ (\cite[Theorem X.36]{RS}). 
We also note that if  $P$ commutes with the complex conjugation: $P\overline{u}=\overline{Pu}$, then, it is enough to assume 
the forward null non-trapping condition only instead of null non-trapping condition (cf.\ \cite[Theorem X.3]{RS}). 
\end{rem}


The study of essential self-adjointness has a long history but mostly on operators of elliptic type 
(see \cite{RS} Chapter~X and reference therein). 
For the construction of solutions to evolution equation with real principal type operators, we refer the classical paper \cite{DH} by Duistermaat and H\"ormander, and the textbook by H\"ormander \cite{Ho}. 
Chihara \cite{C} studies the well-posedness and the local smoothing effects of the Schr\"odinger-type equations :
$\pa_t u(t,x)=-iPu(t,x)$ under the globally non-trapping condition. The well-posedness  implies essential self-adjointness of $P$ if the operator $P$ is symmetric. We assume the non-trapping condition only for null trajectories, since the microlocally 
elliptic region should not be relevant. 

Recently, the scattering theory for Klein-Gordon operators on Lorenzian manifolds has been studied by several authors 
(see, e.g., \cite{BV,GNV,Va1} and references therein). We also mention related work on Strichartz estimates for Lorenzian manifolds (\cite{GT, MT, Taira}),  nonlinear Schr\"odinger-type equations with Minkowski metric (\cite{GS, S, W}), 
and quantum field theory on Minkowski spaces (\cite{VW, GW}). 
In order to study spectral properties of such equations or operators, self-adjointness is fundamental. 
We note a sufficient condition for the essential self-adjointness is discussed in Taira \cite{Taira}. 
The essential self-adjointness for Klein-Gordon operators on scattering Lorentzian manifolds is proved by Vasy  \cite{Va1} under the same null non-trapping condition. We had independently found a proof of the essential self-adjointness 
using different method for compactly supported perturbations (we discuss the basic idea in Appendix \ref{appcpt}). 
Inspired by discussions with Vasy during 2017, we generalized the model to include long-range perturbations, 
and also to higher order real principal type operators. Our proof is considerably different from \cite{Va1}, 
relatively self-contained, and hopefully simpler even though our result is more general than \cite{Va1} 
for the $\re^n$ case. 

This paper is constructed as follows: In Section~\ref{pre}, we prepare several notations and basic lemmas. 
Our main result is proved in Section~\ref{section-proof}. In Subsection~\ref{local} we show that $(P-i)u=0$ implies 
$u$ is smooth. The basic idea of the proof is analogous to Nakamura \cite{N} on microlocal smoothing estimates, 
and relies on the construction of time-global escaping functions (see also Ito, Nakamura \cite{IN} for 
related results for scattering manifolds). 
The technical detail is given in Appendix~\ref{appb}. 
In Subsection~\ref{global}, we show the local smoothness implies an weighted Sobolev estimate, which is sufficient 
for the proof of the essential self-adjointness. The idea is analogous to the {\em radial point estimates}\/ of 
Melrose \cite{M}, and also related to the positive commutators method of Mourre. Here we construct weight functions 
explicitly to show necessary operator inequalities. The proof relies on the standard pseudodifferential operator calculus. 
In Appendix~\ref{appa}, we prove non-trapping estimates for the classical trajectories generated by $p_m(x,\x)$, 
which are necessary in Appendix~\ref{appb}.  
The main lemma (Lemma~\ref{energy}) is a generalization of a result by Kenig, Ponce, Rolvung and Vega \cite{KPRV}, 
though the proof is significantly simplified. In Appendix~\ref{appcpt}, we give a simplified proof of the essential self-adjintness 
for the compactly supported perturbation case. In this case the relatively involved argument of Subsection~\ref{global} 
is not necessarily. 

\medskip
\noindent
\textbf{Acknowledgment.}  
SN is partially supported by JSPS grant Kiban-B 15H03622. 
KT is supported by JSPS Research Fellowship for Young Scientists, KAKENHI Grant Number 17J04478 and the program FMSP at the Graduate School of Mathematics Sciences, the University of Tokyo.
We are grateful to Andra\'s Vasy for stimulating discussions during RIMS meeting at Kyoto in 2017.


\section{Preliminary}\label{pre}
We set $\jap{x}=(1+|x|^2)^{1/2}$ and $D_x=-i\pa/\pa x$. We denote the weighted Sobolev spaces by 
\[
H^{s,\ell} = H^{s,\ell}(\re^n) = \jap{x}^{-\ell}\jap{D_x}^{-s}[L^2(\re^n)], 
\]
for $s,\ell\in\re$, and their norms are given by 
\[
\norm{\f}_{H^{s,\ell}} = \bignorm{\jap{D_x}^s \jap{x}^\ell \f}_{L^2}. 
\]

We use the following notation of pseudo-differential operators. 
For any symbol $a\in C^{\infty}(\re^{2n})$, we define the Weyl quantization of $a$ 
(at least formally) by
\[
\Op(a) u(x) =(2\pi)^{-n} \iint e^{i(x-y)\cdot\x} a((x+y)/2,\x) u(y) \,dy\,d\x, \quad u\in\mathcal{S}(\re^n).
\]
We set the symbol classes $S$, $S^{k,\ell}$ and $S(m,g)$ by
\begin{align*}
&S \coloneqq \bigset{a\in C^{\infty}(\re^{2n})}{\forall\,\a, \b,\;\exists\, C_{\a\b} \, :\, |\pa_x^{\a}\pa_{\x}^{\b}a(x,\x)| \leq C_{\a\b}},\\
&S^{k,\ell} \coloneqq \bigset{a\in C^{\infty}(\re^{2n})}{\forall\,\a, \b,\;\exists\, C_{\a\b} \,:\, |\pa_x^{\a}\pa_{\x}^{\b}a(x,\x)| \leq C_{\a\b}\jap{x}^{k-|\a|}\jap{\x}^{\ell-|\b|}},\\
& S(m,g) \coloneqq \bigset{a\in C^{\infty}(\re^{2n})}{\text{for vector fields}: X_1,X_2,\dots ,X_k, 
\exists\, C \text{ such that}\\
& \qquad\quad 
 |X_1X_2...X_ka(x,\x)| \leq Cm(x,\x)g(X_1,X_1)^{\frac{1}{2}}g(X_2,X_2)^{\frac{1}{2}}\cdots g(X_k,X_k)^{\frac{1}{2}}},
 \end{align*}
where $g$ is a slowing varying metric and $m$ is a $g$-continuous function (see \cite[\S 18.4]{Ho}). 
We denote the Poisson bracket of symbols $a$ and $b$ by $\{a,b\} =\pa_x a\cdot\pa_\x b -\pa_\x a\cdot\pa_x b$.

The proofs of the following lemmas are standard, and we omit the proofs. 

\begin{lem}\label{propwf}
Let $(x_0, \x_0)\in \re^{2n}$ with $\x_0 \neq 0$.  Suppose that there exists $a\in S$ such that $a(x_0, \x_0)>0$ and 
$\| \Op(a_h)u \|_{L^2} = O(h^{k+\e})$ for some $\e>0$, where $a_h(x,\x)=a(x,h\x)$. Then $u\in H^k$ microlocally at $(x_0,\x_0)$.
\end{lem}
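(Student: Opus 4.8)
The plan is to obtain Lemma~\ref{propwf} as the standard semiclassical description of microlocal $H^k$-regularity, via a semiclassical elliptic estimate followed by Plancherel's theorem. First I would record the facts to be used: the substitution $\x\mapsto\x/h$ identifies $\Op(a_h)$ with the semiclassical Weyl quantization of the fixed symbol $a\in S$, so $\{\Op(b_h)\}_{0<h\leq1}$ is uniformly bounded on $L^2$ for every $b\in S$; one has $\Op(b_h)\Op(c_h)=\Op(d_h)$ with $d=bc+\tfrac{h}{2i}\{b,c\}+\cdots$ bounded in $S$ uniformly in $h$, the terms of positive $h$-order being supported in $\supp b\cap\supp c$; and $\Op(b_h)\Op(c_h)=O(h^\infty)$ in $\bounded(L^2)$ whenever $b$ has compact support disjoint from $\supp c$. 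Recall that ``$u\in H^k$ microlocally at $(x_0,\x_0)$'' means there are $\g\in C_c^\infty(\re^n)$ with $\g(x_0)\neq0$ and an open cone $W\subset\re^n\setminus\{0\}$ containing $\x_0$ with $\jap{\x}^k\widehat{\g u}\in L^2(W)$. I would assume $u\in L^2$, which holds in all applications of the lemma in this paper.

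Step~1 is a semiclassical elliptic estimate. Since $a$ is continuous and $a(x_0,\x_0)>0$, choose a bounded open set $U\ni(x_0,\x_0)$ and $c>0$ with $a\geq c$ on $U$, and $\i\in C_c^\infty(U)$ with $\i\equiv1$ on a neighborhood $V\subset U$ of $(x_0,\x_0)$. The usual finite-order parametrix iteration, starting from $\i/a\in C_c^\infty(U)$, produces for every $N$ symbols $q_0,\dots,q_{N-1}\in C_c^\infty(U)$ such that, with $Q_h=\Op(q^{(N)}_h)$, $q^{(N)}=\sum_{j<N}h^jq_j$, one has $Q_h\Op(a_h)=\Op(\i_h)+h^NR_{N,h}$ with $\norm{Q_h}_{\bounded(L^2)}+\norm{R_{N,h}}_{\bounded(L^2)}\leq C_N$ uniformly in $h$. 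Applying this to $u$,
\[
\norm{\Op(\i_h)u}_{L^2}\leq C_N\bigpare{\norm{\Op(a_h)u}_{L^2}+h^N\norm{u}_{L^2}},
\]
so, taking $N>k+\e$, the hypothesis gives $\norm{\Op(\i_h)u}_{L^2}=O(h^{k+\e})$.

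Step~2 passes from $\i$ to a tensor cutoff and then reassembles the frequencies. Choose $\g\in C_c^\infty(\re^n)$ with $\g\equiv1$ near $x_0$ and $\f\in C_c^\infty(\re^n)$ with $\f\equiv1$ near $\x_0$, with supports so small that $\supp\g\times\supp\f\subset V$. Since $\Op(\i_h)+\Op((1-\i)_h)=I$ and the symbol of $\f(hD)\g$ is essentially supported in $\supp\g\times\supp\f\subset V$, where $1-\i$ vanishes, the disjoint-support estimate gives $\f(hD)\g=\f(hD)\g\,\Op(\i_h)+O(h^\infty)$ in $\bounded(L^2)$; hence $\norm{\f(hD)\g u}_{L^2}\leq C\norm{\Op(\i_h)u}_{L^2}+O(h^\infty)\norm{u}_{L^2}=O(h^{k+\e})$ for $0<h\leq h_0$. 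By Plancherel, $\int|\f(h\x)|^2|\widehat{\g u}(\x)|^2\,d\x\leq Ch^{2(k+\e)}$. Multiplying by $h^{-2k-1}$ and integrating over $h\in(0,h_0]$ — the right-hand side then integrating to a finite constant, precisely because of the surplus $\e$ — Fubini turns the left-hand side into $\int I(\x)|\widehat{\g u}(\x)|^2\,d\x$, where
\[
I(\x)=\int_0^{h_0}|\f(h\x)|^2h^{-2k-1}\,dh=|\x|^{2k}\int_0^{h_0|\x|}\bigabs{\f(t\x/|\x|)}^2t^{-2k-1}\,dt .
\]
For $\x$ whose direction lies in a sufficiently narrow cone $W$ about $\x_0/|\x_0|$ and with $|\x|$ large, the segment $\bigset{t\x/|\x|}{\bigabs{t-|\x_0|}\leq\rho}$ lies in $\{\f\equiv1\}$ for some fixed $\rho>0$, so $I(\x)\geq c_1|\x|^{2k}$ with $c_1=\int_{|\x_0|-\rho}^{|\x_0|+\rho}t^{-2k-1}\,dt>0$. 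Hence $\jap{\x}^k\widehat{\g u}\in L^2(W\cap\{|\x|\geq R\})$, while $\{|\x|<R\}$ contributes finitely since $\g u\in L^2$; therefore $\jap{\x}^k\widehat{\g u}\in L^2(W)$, i.e.\ $u\in H^k$ microlocally at $(x_0,\x_0)$.

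I do not expect a genuinely difficult step; the content is really the setup of the semiclassical calculus. The two points I would treat carefully are: in Step~1, keeping the parametrix finite-order and with compactly supported symbols, so that $R_{N,h}$ is an honest $L^2$-bounded operator carrying an $h^N$ gain (this is also where assuming $u\in L^2$ is convenient, since otherwise one must first reduce to that case); and, in Step~2, matching the semiclassical parameter to the frequency scale, which is handled cleanly by the weight $h^{-2k-1}$ — integrable exactly because of the extra $\e$ in the hypothesis — together with the uniform lower bound $I(\x)\gtrsim|\x|^{2k}$ on the cone. The remaining ingredients (uniform $L^2$-boundedness, the composition formula, and the $O(h^\infty)$ gain on disjoint supports) are entirely standard.
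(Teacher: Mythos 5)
Your proof is correct. Note that the paper itself offers no proof to compare against: Lemma \ref{propwf} is one of the two lemmas explicitly stated with ``the proofs \dots are standard, and we omit the proofs'', and your argument is precisely the standard one — identify $\Op(a_h)$ with the semiclassical quantization of $a$, use a finite-order microlocal parametrix on the elliptic set to replace $a$ by a cutoff $\i$ equal to $1$ near $(x_0,\x_0)$, pass to the tensor cutoff $\f(hD)\g$ by the disjoint-support ($O(h^\infty)$) estimate, and then recover the cone estimate $\jap{\x}^k\widehat{\g u}\in L^2(W)$ by integrating $h^{-2k-1}\norm{\f(hD)\g u}^2$ in $h$ (equivalently a dyadic Littlewood--Paley summation), where the hypothesis' extra $\e$ is exactly what makes the reassembly converge. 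Your two points of care are the right ones, and your explicit assumption $u\in L^2$ (not stated in the lemma but needed to control the $h^N$ remainder and the low frequencies) is consistent with how the lemma is invoked in the proof of Proposition \ref{locre}, where $u\in L^2(\re^n)$.
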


\begin{lem}\label{convergence}
Let $k,\ell\in \re$. Assume $a_j\in S^{k,\ell}$ is a bounded sequence in $ S^{k,\ell}$ and $a_j\to 0$ in $S^{k+\d,\ell+\d}$ for some $\d>0$. Then, for each $s, t\in \re$ and $u\in H^{s,t}$
\[
\|\Op(a_j)u\|_{H^{s-k,t-\ell}}\to 0\quad\text{as } j\to \infty.
\]
\end{lem}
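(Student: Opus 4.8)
\textbf{Proof plan for Lemma~\ref{convergence}.}
The plan is to reduce everything to boundedness and convergence of operators between $L^2$ spaces, using the standard mapping properties of pseudodifferential operators in the classes $S^{k,\ell}$. First I would use the conjugation identity: for $u\in H^{s,t}$, write $\norm{\Op(a_j)u}_{H^{s-k,t-\ell}} = \norm{\jap{D_x}^{s-k}\jap{x}^{t-\ell}\Op(a_j)u}_{L^2}$, and set $v = \jap{D_x}^{s}\jap{x}^{t}u \in L^2$, so that $u = \Op(\jap{x}^{-t})\Op(\jap{\x}^{-s})$ applied to something $L^2$, modulo the usual identifications; more cleanly, I would introduce the operators $\Lambda_1 = \jap{D_x}^{s-k}\jap{x}^{t-\ell}$ and $\Lambda_2 = \jap{x}^{-t}\jap{D_x}^{-s}$ and consider the composite operator $b_j \coloneqq \Lambda_1 \Op(a_j)\Lambda_2$, which is a pseudodifferential operator with symbol in $S^{0,0}$ obtained from $a_j$ by the symbol calculus (composition with elliptic symbols of orders $(k-s,\ell-t)$ on the left and $(-s,-t)$ on the right). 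The claim $\norm{\Op(a_j)u}_{H^{s-k,t-\ell}}\to 0$ for every fixed $u\in H^{s,t}$ is then equivalent to $\norm{b_j w}_{L^2}\to 0$ for every fixed $w\in L^2$, i.e.\ strong convergence $b_j\to 0$ on $L^2$.

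Next I would verify the two hypotheses survive this reduction. Since $a_j$ is bounded in $S^{k,\ell}$, the symbol calculus (asymptotic expansions with uniform control of the seminorms, see \cite[\S18]{Ho}) shows that the symbol $\tilde b_j$ of $b_j$ is bounded in $S^{0,0}$; hence by the Calder\'on--Vaillancourt theorem the operators $b_j$ are uniformly bounded on $L^2$, say $\norm{b_j}_{L^2\to L^2}\leq M$. Similarly, since $a_j\to 0$ in $S^{k+\d,\ell+\d}$, the same calculus shows $\tilde b_j\to 0$ in $S^{\d,\d}$, and therefore $b_j = \Op(\tilde b_j)\to 0$ in operator norm from $H^{\d,\d}$ to $L^2$ (a symbol tending to $0$ in $S^{\d,\d}$ gives operators tending to $0$ in $\bounded(H^{\d,\d},L^2)$, by Calder\'on--Vaillancourt applied to $\Op(\jap{x}^{-\d}\jap{\x}^{-\d})^{-1}$-type conjugates). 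In particular $b_j\to 0$ in operator norm on the dense subspace $H^{\d,\d}\subset L^2$, in the sense that $\norm{b_j}_{\bounded(H^{\d,\d},L^2)}\to 0$.

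Finally I would assemble a standard uniform-boundedness-plus-dense-subspace argument to upgrade this to strong convergence on all of $L^2$. Fix $w\in L^2$ and $\e>0$. Choose $w'\in H^{\d,\d}$ (dense in $L^2$) with $\norm{w-w'}_{L^2}<\e/(2M)$; then
\[
\norm{b_j w}_{L^2} \leq \norm{b_j(w-w')}_{L^2} + \norm{b_j w'}_{L^2}
\leq M\norm{w-w'}_{L^2} + \norm{b_j}_{\bounded(H^{\d,\d},L^2)}\norm{w'}_{H^{\d,\d}}.
\]
The first term is $<\e/2$, and the second term is $<\e/2$ for $j$ large since $\norm{b_j}_{\bounded(H^{\d,\d},L^2)}\to 0$. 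Hence $\norm{b_j w}_{L^2}\to 0$, which unwinds to $\norm{\Op(a_j)u}_{H^{s-k,t-\ell}}\to 0$, as claimed.

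The only real subtlety — and the step I would be most careful about — is the bookkeeping in the symbol calculus: one must check that composing $a_j$ with the elliptic weights $\jap{x}^{r}\jap{D_x}^{\rho}$ preserves both the uniform boundedness in $S^{k,\ell}$ and the convergence to $0$ in the $\d$-improved class, with all constants uniform in $j$. This is routine given that the composition formula for Weyl symbols has an asymptotic expansion whose terms and remainders are controlled by finitely many seminorms of the factors; since the weight symbols are fixed, a bounded/convergent sequence of $a_j$'s produces a bounded/convergent sequence of composed symbols in the correspondingly shifted classes. Everything else is the classical $L^2$-boundedness theorem and the $3\e$-argument above.
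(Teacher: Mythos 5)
Your argument is correct, and it is essentially the standard one: the paper itself omits the proof of Lemma~\ref{convergence} ("the proofs of the following lemmas are standard"), and your reduction to $L^2$ by conjugation with $\Lambda_1=\jap{D_x}^{s-k}\jap{x}^{t-\ell}$ and $\Lambda_2=\jap{x}^{-t}\jap{D_x}^{-s}$, followed by uniform $L^2$-boundedness (Calder\'on--Vaillancourt with seminorm control), operator-norm convergence $\bounded(H^{\d,\d},L^2)\to 0$ from the convergence of the symbols in the $\d$-shifted class, and the $3\e$/density argument on $H^{\d,\d}\subset L^2$, is exactly the intended routine proof. Two small points. First, note that the paper's literal definition of $S^{k,\ell}$ puts the $\jap{x}$-order first, whereas the statement of the lemma (and the paper's own usage elsewhere, e.g.\ $q\in S^{m,-\m}$) only makes sense if the first index is read as the $\jap{\x}$-order; your choice of $\Lambda_1,\Lambda_2$ silently adopts the latter convention, which is the one that makes the conclusion $\Op(a_j):H^{s,t}\to H^{s-k,t-\ell}$ correct, so you did the right thing, but it would be worth saying so explicitly. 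Second, your parenthetical description of the left conjugating weight as having orders $(k-s,\ell-t)$ has the signs reversed relative to the operator $\Lambda_1$ you actually defined; the explicit operators are the correct ones, so this is only a slip in the prose, and with it fixed the composed symbols $\tilde b_j$ are indeed bounded in $S^{0,0}$ and tend to $0$ in $S^{\d,\d}$, with all constants depending on finitely many seminorms of $a_j$ uniformly in $j$, as you claim.
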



\section{Proof of Theorem \ref{nullthm}}\label{section-proof}

By the basic criterion for the essential self-adjointness (\cite[Theorem~VII.3]{RS}), it is sufficient to show
\[
\Ker(P^* \pm i) = \bigr\{0\bigr\}
\]
to prove Theorem \ref{nullthm}. 
Since $D(P) = C_c^ \infty(\re^n)$, we have $D(P^{*})=\{u\in L^2(\re^n)\,|\, Pu\in L^2(\re^n)\}$
where $P$ acts on $u$ in the distribution sense. We hence show: 
\[
(P \pm i)u = 0 \text{ in }\mathcal{D}'(\re^n) \text{ for } u\in L^2(\re^n)
\text{ implies } u=0. 
\]
We only consider ``$-$'' case. The ``$+$'' case is similarly handled.
Moreover, we note if $u$ satisfies $(P-i)u=0$ and $u\in H^{\frac{m-1}{2},-\frac{1}{2}}(\re^n)$, 
then $u=0$ follows from a simple argument in \cite{Va1}.
Namely, we take a real-valued function $\g\in C_c^{\infty}(\{t\in \re\,|\, t\leq 2\})$ such that $\g(t)=1$ for $t\leq1$ and set $\g_{R}(x,\x)=\g(\jap{x}/R)\g(\jap{\x}/R)$. Then we have
\[
-2i\|u\|_{L^2}^2=(Pu,u)_{L^2}-(u,Pu)_{L^2}=\lim_{R\to \infty}([\Op(\g_R),P]u,u)_{L^2}.
\] 
We note that $[\Op(\g_R),P]$ is uniformly bounded in $\Op S^{m-1,-1}$ and converges to $0$ in $\Op S^{m-1+\d,-1+\d}$ as $R\to \infty$ for any $\d>0$. We obtain $u=0$ by using Lemma \ref{convergence}.
Thus, in order to prove Theorem \ref{nullthm}, it suffices to prove
\begin{prop}\label{reg}
If $u\in L^2(\re^n)$ satisfies $(P-i)u=0$, then $u \in H^{\frac{m-1}{2},-\frac{1}{2}}$.
\end{prop}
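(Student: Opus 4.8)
The plan is to upgrade the a priori regularity of $u$ in two stages: first prove that $u$ is smooth (i.e.\ $u\in C^\infty(\re^n)$, or more precisely $u\in H^s_{\mathrm{loc}}$ for all $s$), and then extract from this the weighted estimate $u\in H^{\frac{m-1}{2},-\frac12}$. The first stage is a microlocal elliptic/propagation argument. Since $p$ is real principal type, away from the characteristic set $p^{-1}(0)$ the operator $P-i$ is microlocally elliptic (the symbol $p-i$ is nonvanishing there because $p$ is real), so elliptic regularity gives $WF(u)\subset \mathrm{Char}(P)=p_m^{-1}(0)\cap\{\x\neq0\}$ — actually, since we are solving $(P-i)u=0$ with $P$ real, one gets even more: a standard commutator computation shows that on the characteristic set the imaginary part $-1$ of the full symbol forces positivity of a commutator, so there is no wavefront set there either. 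The cleanest route, and the one indicated in the introduction, is to construct a time-global escaping (symbol) function along the Hamilton flow of $p_m$ using the null non-trapping condition (Assumption~\ref{assnull}), in the spirit of Nakamura's microlocal smoothing estimates \cite{N} and Ito–Nakamura \cite{IN}; propagating regularity along these escaping trajectories, together with the ellipticity at infinity in $\x$ coming from the lower bound $|\partial_\x p_m|\gtrsim|\x|^{m-1}$ in Assumption~\ref{assa}, yields $u\in\bigcap_s H^s_{\mathrm{loc}}$. The details of this escaping-function construction are deferred to Appendix~\ref{appb}, relying on the non-trapping estimates for the $p_m$-trajectories proved in Appendix~\ref{appa}.

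For the second stage, knowing $u$ is locally smooth, I would prove the global weighted bound $u\in H^{\frac{m-1}{2},-\frac12}$ by a positive-commutator (Mourre-type) argument with an explicitly constructed weight, analogous to Melrose's radial point estimates \cite{M}. Concretely, choose a symbol $b=b(x,\x)\in S^{m-1,-1}$ of the form $b = w(x)\,\chi(\x)$ (or more precisely built from $\jap{x}$ and a suitable function of $x\cdot\partial_\x p_m$-type quantities) so that the principal symbol of $i[P,\Op(b)]$ is, modulo lower order and modulo terms controlled by the local smoothness of $u$, bounded below by $c\,\jap{x}^{-1}\jap{\x}^{m-1}$ — i.e.\ a positive multiple of the symbol whose quantization controls the $H^{\frac{m-1}{2},-\frac12}$-norm. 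Pairing $i[P,\Op(b)]u$ with $u$: on one hand $(i[P,\Op(b)]u,u)=(i(P-i)^*\Op(b)u,u)-(i\,\Op(b)(P-i)u,u)+(\Op(b)u,u)-(u,\Op(b)u) = \cdots$, and since $(P-i)u=0$ this reduces to a finite, $L^2$-controlled quantity (the $\pm i$ terms contribute a bounded form in $u$); on the other hand the commutator is bounded below by $c\|u\|_{H^{\frac{m-1}{2},-\frac12}}^2$ minus error terms. The errors are of three types: genuinely lower-order terms (absorbed by an interpolation/bootstrap), terms supported in a compact set in $x$ (harmless because $u$ is locally smooth, hence locally in every $H^s$), and terms supported in $\{|\x|\leq R\}$ (again controlled since $\jap{D}^s u$ is locally $L^2$). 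Running the standard regularization (insert $\Op(\g_R)$ as in the displayed computation above, or a symbol-level cutoff $\jap{h\x}^{-N}$) makes all pairings rigorous and then lets $R\to\infty$ using Lemma~\ref{convergence}.

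The main obstacle is the construction, in stage two, of the weight $b$ making the commutator $i[P,\Op(b)]$ positive in the right sense \emph{globally} — this is where the null non-trapping geometry re-enters, since along trapped-in-$x$ directions one cannot hope for a positive commutator, and the function must be engineered (using the structure of the Hamilton flow of $p_m$ and the asymptotic constancy from Assumption~\ref{assa}) so that the "bad" set is swept into the elliptic region $|\x|$ large or into a compact $x$-region where local smoothness rescues us. Keeping track of the symbol classes so that all the pseudodifferential calculus (products, commutators, and the mapping properties in Lemma~\ref{convergence}) applies cleanly, and verifying the sign of the subprincipal contributions coming from the Weyl quantization and from the $-i$ in $P-i$, is the delicate bookkeeping; the rest is standard $\Op S^{k,\ell}$-calculus. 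I expect stage one (local smoothness via escaping functions) to be conceptually the heart of the matter but technically self-contained once Appendices~\ref{appa} and~\ref{appb} are in place, and stage two to be the more computational part carried out in Subsection~\ref{global}.
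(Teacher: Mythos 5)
Your two-stage plan matches the paper's proof exactly: Subsection~\ref{local} establishes $u\in C^\infty(\re^n)$ by constructing time-global escaping functions along the $p_m$-flow (Appendices~\ref{appa} and~\ref{appb}), and Subsection~\ref{global} proves a radial-point/positive-commutator estimate (Theorem~\ref{rad}) that upgrades local smoothness to the weighted bound, whence $u\in H^{m/2,-1/2}\subset H^{(m-1)/2,-1/2}$ upon taking $\gamma=1/2$ and $k=m-1$.

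One point to correct: you attribute the null non-trapping condition to stage two ("this is where the null non-trapping geometry re-enters"), but the paper uses Assumption~\ref{assnull} \emph{only} in stage one (this is stated explicitly before Proposition~\ref{locre}). The radial-point estimate near spatial infinity needs only the asymptotic constancy of the coefficients and the non-degeneracy $|\partial_\xi p_0|\gtrsim|\xi|^{m-1}$; the weight $b^\delta$ is built from $|x|^{\pm\gamma}$ and the outgoing/incoming variable $\eta=\hat x\cdot\hat v(\xi)$, and the sign of $\{p_0,b\}$ comes purely from the free-flow structure at large $|x|$, with $\{q,b\}$ absorbed as a small perturbation for $M$ large — no trapping hypothesis enters. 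Also, the bootstrap you mention in passing is in fact the backbone of stage two: Lemma~\ref{radlem} is iterated, gaining half a derivative per step, and the whole ladder $\tilde B_j\varphi=B_{j-1}\varphi\in H^{(j+1-m)/2}\cap H^{j/2,-1/2}$ is what finally reaches the target regularity; without that iterative structure one cannot start the commutator estimate from $u\in L^2$ alone.
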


The proof of Proposition \ref{reg} is divided into two parts. In Subsection \ref{local}, we prove the local smoothness of $u$. In Subsection \ref{global}, using the local smoothness of $u$, we prove weighted Sobolev properties of $u$.

\subsection{Local regularity}\label{local}

The main result of this subsection is the following proposition. 
We note that we need the null non-trapping condition only for this proposition. 
\begin{prop}\label{locre}
If $u\in L^2(\re^n)$ satisfies $(P-i)u=0$, then $u\in C^{\infty}(\re^n)$.
\end{prop}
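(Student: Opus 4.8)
\textbf{Proof plan for Proposition \ref{locre}.}
The plan is to prove $u \in C^\infty(\re^n)$ by showing that the wavefront set of $u$ is empty. Since $P = \Op(p)$ is a differential operator of real principal type and $(P-i)u = 0$, the characteristic set $\mathrm{Char}(P) = p_m^{-1}(0)\setminus\{\x=0\}$ is where singularities can live; off the characteristic set elliptic regularity applies directly. On $\mathrm{Char}(P)$, the usual propagation of singularities for real principal type operators says $\WF(u)$ is invariant under the Hamilton flow of $p_m$. The key extra input is the null non-trapping condition (Assumption \ref{assnull}): every null bicharacteristic escapes to spatial infinity as $|t|\to\infty$. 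The strategy is to combine this escape with the fact that $u \in L^2$ to show that no singularity can survive propagating out to infinity.

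First I would reduce to a microlocal statement: fix $(x_0,\x_0) \in p_m^{-1}(0)$ with $\x_0 \neq 0$ (the case $(x_0,\x_0)\notin p_m^{-1}(0)$ being handled by ellipticity via Lemma \ref{propwf} with a suitable symbol), and aim to show $u \in H^k$ microlocally at $(x_0,\x_0)$ for every $k$. Following the approach of Nakamura \cite{N} on microlocal smoothing, the idea is to construct a time-global escaping function (a symbol $a = a(t,x,\x)$ along the bicharacteristic, increasing under the flow and controlling the scaled quantization $a_h(x,\x) = a(x,h\x)$) using the non-trapping property from Appendix \ref{appa}; the technical details are deferred to Appendix \ref{appb}. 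Using this escaping function together with a Heisenberg-type evolution / commutator argument for the (time-dependent) operator generated by $P$, one obtains a propagation-of-smoothness estimate: the $L^2$-regularity at large $|x|$ along the trajectory (which is automatic since $u\in L^2$, so $u$ is trivially in $H^{0}$ everywhere and the relevant scaled norms are bounded) propagates back to give $\|\Op(a_h)u\|_{L^2} = O(h^{k+\e})$ at $(x_0,\x_0)$. Lemma \ref{propwf} then yields $u \in H^k$ microlocally at $(x_0,\x_0)$; since $k$ is arbitrary and $(x_0,\x_0)$ is an arbitrary characteristic point, $\WF(u) = \emptyset$, hence $u \in C^\infty(\re^n)$.

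The main obstacle, and the heart of the argument, is the construction of the time-global escaping function with the right quantitative properties: it must be a genuine symbol uniformly along the whole trajectory, must have a positive lower bound on its flow derivative (so that the commutator is sign-definite modulo controllable errors), and must be compatible with the semiclassical scaling $\x \mapsto h\x$ needed to apply Lemma \ref{propwf}. This requires the non-trapping estimates for the Hamilton flow of $p_m$ proved in Appendix \ref{appa} — in particular control on how $|y(t)|$ grows and on the derivatives of the flow — which is exactly why those estimates (generalizing Kenig--Ponce--Rolvung--Vega \cite{KPRV}) are needed. A secondary technical point is the bookkeeping of error terms in the Heisenberg evolution, i.e.\ verifying that the symbol calculus remainders are of lower order and do not spoil the induction on $k$; this is routine pseudodifferential calculus but must be done carefully, and is relegated to Appendix \ref{appb}.
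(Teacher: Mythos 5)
Your overall outline --- escaping functions along the forward Hamilton flow, a positive-commutator / Heisenberg-evolution inequality, non-trapping estimates deferred to the appendices, and Lemma~\ref{propwf} to convert a quantitative bound into microlocal $H^k$ regularity --- matches the paper's strategy (which is indeed by contradiction, but that is cosmetic). However, the step where you actually gain regularity is misidentified, and as stated it would not close.

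You write that the mechanism is that ``the $L^2$-regularity at large $|x|$ along the trajectory\ldots{} propagates back'' to yield $\|\Op(a_h)u\|_{L^2}=O(h^{k+\e})$. Propagation alone cannot do this: a $u\in L^2$ control at large $|x|$, pushed back along the flow, would only reproduce $L^2$-type information at $(x_0,\x_0)$, not $H^k$ regularity for arbitrary $k$. Indeed, pure propagation of singularities says $\WF(u)$ is invariant under the null flow, and trajectories escaping to spatial infinity do not by themselves force $\WF(u)=\emptyset$ (singularities can just as well ``come in from infinity''). The essential ingredient you have left out is the imaginary spectral parameter. Because $(P-i)u=0$, the function $u(t,x):=e^{-t}u(x)$ solves the Schr\"odinger-type equation $i\pa_t u - Pu = 0$ with $\|u(t)\|\le e^{-t}\|u\|$, and this exponential decay in $t$ is what defeats the semiclassical scaling. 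Concretely, the paper runs the monotonicity identity
\[
(u,F_\ell(0)u)=(u(t),F_\ell(t)u(t))-\int_0^t\Bigpare{u(s),\Bigpare{\tfrac{dF_\ell}{ds}(s)+i[P,F_\ell(s)]}u(s)}\,ds
\]
up to time $t=h_\ell^{-1}$; the operator bound $\|F_\ell(t)\|\lesssim \jap{t}\,h_\ell^{(-m+2)/(m-1)}$ combined with $\|u(t)\|\le e^{-t}\|u\|$ makes the first term $O(h_\ell^\infty)$ because $e^{-2/h_\ell}$ beats every power of $h_\ell$, while the commutator term is $O(h_\ell^\infty)$ by construction of $F$. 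Since the left-hand side is bounded below by a fixed power $Ch_\ell^{2k/(m-1)+2}$ whenever $u\notin H^k$ microlocally at $(x_0,\x_0)$, one gets a contradiction. Without identifying this $e^{-t}$ factor and the choice $t\sim h^{-1}$, your plan has no source for the gain of derivatives, so this is a genuine gap rather than a cosmetic omission.

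A secondary remark: the paper's Lemma~\ref{propwf} is applied with the scaling $a_{h,m}(x,\x)=a(x,h^{1/(m-1)}\x)$, not $a(x,h\x)$, to make the scaling compatible with an $m$-th order operator; you should track this carefully when matching the $O(h^{k+\e})$ bound to the exponent $h^{2k/(m-1)+2}$ appearing in the contradiction. The rest of your sketch --- deferring the escaping-function construction and symbol-calculus bookkeeping to Appendices~\ref{appa} and~\ref{appb}, and handling points off $p_m^{-1}(0)$ by ellipticity --- is in line with the paper.
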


\begin{proof}
It suffices to prove $u\in H^k_\mathrm{loc}(\re^n)$ for any $k>0$. We use the contradiction argument. Suppose $u\notin H_\mathrm{loc}^{k}(\re^n)$ with some $k$.  By Lemma \ref{propwf}, there exist $(x_0, \x_0)\in \re^n \times \re^n$ with $\x_0 \ne 0$, $C>0$, and a sequence $\{h_\ell\}\subset (0, 1]$ such that for any $a\in C_0^\infty(\re^n)$ with $a(x_0, \x_0) = 1 $, 
\[
h_{\ell} \to 0  \text{ as }  \ell \to \infty,  \text{ and }\| \Op(a_{h_{\ell},m})u \| \geq Ch_{\ell}^{\frac{k}{m-1}+1},
\]
where $a_{h,m}(x,\x)=a(x,h^{\frac{1}{m-1}}\x)$. 
We may assume $(x_0,\x_0) \in p_m^{-1}(0)$ since $u$ is smooth microlocally in $\re^{2n}\setminus p_m^{-1}(\{0\})$. Now we use the following proposition.

\begin{prop}\label{singprop}

There exists a family of bounded operators $\{F(h,t)\}_{0<h\leq 1, t\geq 0}$ on $L^2(\re^n)$ such that
\begin{enumerate}
\item $F(h,0)=\Op(\g_{h})^2=\Op(\g_h)^*\Op(\g_h)$, where $\g_h$ satisfies $\g_h(x_0,\x_0)\geq 1$ 
and for any $\a,\b\in\ze_+^n$, 
\[
|\pa_{x}^{\a}\pa_{\x}^{\b}\g_{h}(x,\x)|\leq C_{\a\b}h^{\frac{|\b|}{m-1}}\jap{x}^{-|\a|}.
\]
\item There exists $C>0$ such that for $0<h\leq 1$, 
\[
\|F(h,t)\|_{B(L^2)}\leq C\jap{t}h^{(-m+2)/(m-1)}, \quad t\geq 0. 
\]
\item There exists $R(h,t)\in B(L^2(\re^n))$ such that
\begin{align*}
&\frac{d}{dt}F(h,t)+i[P,F(h,t)]\geq -R(h,t),\quad t\geq 0, \\
& \sup_{t\geq 0}\jap{t}^{-1}\|R(h,t)\|_{B(L^2)}=O(h^{\infty})
\quad\text{as }h\to 0. 
\end{align*}
\end{enumerate}
\end{prop}

Proposition \ref{singprop} can be proved similarly as \cite[Lemma 9]{N}. 
For the completeness, we give a proof of Proposition \ref{singprop} in the Appendix \ref{appb}. 
Now we set $u(t, x) \coloneqq e^{-t}u(x) $. Then $u(t,x)$ satisfies
\[
i\pa_tu(t,x) - Pu(t,x) = 0,\,\, \|u(t) \|_{L^2(\re^n)} \leq e^{-t}\|u\|_{L^2(\re^n)},
\]
where the first equality is in the distributional sense. 
We set $F_{\ell}(t) = F(h_{\ell},t)$. Then, we have
\begin{align*}
Ch_{\ell}^{\frac{2k}{m-1}+2} \leq & \|\Op(\g_{h_{\ell}})u\|^2=(u, F_{\ell}(0)u)\\
= & (u(t), F_{\ell}(t)u(t))- \int_0^t \frac{d}{ds}\bigpare{u(s), F_{\ell}(s)u(s)}ds\\
= &  (u(t), F_{\ell}(t)u(t)) - \int_0^t \biggpare{u(s), \biggpare{\frac{dF_{\ell}}{ds}(s) + i[P, F_{\ell}(s)]}u(s)} ds\\
\leq & Ch_{\ell}^{\frac{-m+2}{m-1}}\jap{t}e^{-2t}\|u\|^2 
+ O(h_{\ell}^{\infty})\cdot \|u\|^2 \int_0^te^{-2s}\jap{s}ds,
\end{align*}
where all the inner products and norms here are in $L^2(\re^n)$, and $O(h_{\ell}^{\infty})$ is uniformly in $t$. Now, we take $t = h_{\ell}^{-1}$ then we conclude a contradiction. 
Thus, we obtain $u\in H^k_\mathrm{loc}(\re^n)$ for any $k>0$. This completes the proof of Proposition \ref{locre} 
\end{proof}

\subsection{Uniform regularity outside a compact set}\label{global}

In this subsection, we prove a priori sub-elliptic estimates near infinity. 
The following estimates are based on the radial points estimates in \cite{M}, where the radial points estimates 
are used for scattering theory on scattering manifolds. 
By the classical propagation of singularities, the singularities of a solution to $Pu=0$ 
(provided $P$ is real-valued real principal type) propagate along the Hamilton flow associated with $p$. 
At points where the Hamilton vector filed vanishes, we may use the so-called radial points,
which implies $u$ is rapidly decaying at a radial source if $u$ has a threshold regularity at the radial source. 

In our case, the radial points estimates are analogous to the  Mourre estimate microlocally near outgoing or 
incoming regions, which is used commonly in scattering theory. We give a self-contained proof 
of the radial point estimate based on an explicit construction of escaping functions. 
We note the operator theoretical framework of the Mourre theory is not applicable here since 
we do not have the self-adjointness of $P$ at this point. 

We set 
\[
P=P_0+Q, \quad P_0=p_0(D_x), \quad Q=\Op(q),
\]
where
\[
q(x,\x)=p(x,\x)-p_0(\x)\in S^{m,-\m},\,\, V(x,\x)=p(x,\x)-p_m(x,\x)\in S^{m-1,-\m}. 
\]
We use the following smooth cut-off functions: Let $\i\in C^\infty(\re)$ be such that 
\[
\i(t)=\begin{cases} 1 \ &\text{if }t\leq 1, \\ 0 \ &\text{if }t\geq 2, \end{cases}
\quad 0\leq \i(t)\leq 1, \quad \i'(t)\leq 0\  \text{ for }t\in\re,
\]
and $\supp\chi'\Subset(1,2)$. 
We write $\oi(t)=1-\i(t)$, and 
\[
\i_M(x) =\i(|x|/M), \quad \oi_M(x)=\oi(|x|/M), 
\quad x\in\re^n, 
\]
with $M>0$. 
A main result of this subsection is the following theorem.

\begin{thm}\label{rad}
Let $\c>0$ and $z\in\co\setminus\re$. There is $M>0$ such that if $\f\in L^2(\re^n)$, 
$(P-z)\f\in \mathcal{S}(\re^n)$ and $\i_M(x)\f\in C^\infty(\re^n)$, 
then $\f\in H^{k+1-m/2,-\c}\cap H^{k+1/2,-\c-1/2}$ for any $k\in \re$.
\end{thm}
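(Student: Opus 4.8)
The plan is to prove Theorem~\ref{rad} by a bootstrap-type argument combined with a positive commutator (radial point) estimate microlocally near the ``incoming'' and ``outgoing'' regions of phase space. The starting observation is that, since $\i_M(x)\f\in C^\infty$, the only place where regularity can fail is near spatial infinity; and near infinity, since $p_m(x,\x)$ is asymptotically $p_0(\x)$, the Hamilton flow is a small perturbation of the free flow generated by $p_0$. On the characteristic set $p_m^{-1}(0)$ near infinity, the natural ``radial variable'' is $\sigma = x\cdot\nabla_\x p_m(x,\x)/(|x||\x|^{m-1})$ (the normalized radial velocity), which measures whether a trajectory is outgoing ($\sigma>0$) or incoming ($\sigma<0$). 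Using the non-trapping nature of the flow (built into the geometry via Assumption~\ref{assnull}, though here only the behavior near infinity matters), trajectories near infinity leave every compact set; the sets $\{\sigma\geq -1+\delta\}$ and $\{\sigma\leq 1-\delta\}$ are respectively forward- and backward-trapped in the ``radial compactification'' sense, so one can run the two one-sided propagation estimates.

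First I would set up the weight/escape function: for parameters to be chosen, construct a symbol of the form $b = \jap{x}^{-2\c}\jap{\x}^{2k+1-m}\, \chi(\text{cutoffs})$ times a factor depending monotonically on $\sigma$, designed so that the Poisson bracket $\{p_m, b\}$ (equivalently $\frac{1}{i}[P,\Op(b)]$ modulo lower order) has a definite sign — negative definite as a symbol, modulo terms supported where $\f$ is already known to be regular (the region $\i_M(x)\f$ controls, and the elliptic region off $p_m^{-1}(0)$) plus genuinely lower-order/compact errors. Concretely, the $x$-derivative falling on $\jap{x}^{-2\c}$ produces a factor $\sim -\c\sigma \jap{x}^{-2\c}\jap{\x}^{m-1}$, which has the good (negative) sign precisely in the outgoing region $\sigma>0$; combining with a $\sigma$-dependent factor whose derivative along the flow is negative handles a full neighborhood of one radial set. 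Then the Gårding inequality turns the symbol inequality into an operator inequality, and pairing $\frac{1}{i}[P,\Op(b)]\f$ against $\f$ — using $(P-z)\f\in\mathcal S$ to handle the $[P-z,\cdot]$ expansion and the $\Im z\neq 0$ term, which actually has a helpful sign — yields an a priori bound $\norm{\Op(b^{1/2})\f}^2 \lesssim \norm{\f}_{\text{lower order}}^2 + \norm{\text{controlled terms}}$. This upgrades the regularity/decay of $\f$ by the amount encoded in $b$, microlocally near the outgoing region; the incoming region is symmetric (reverse the sign of $t$, i.e.\ of $\sigma$), and together they cover all of $p_m^{-1}(0)$ near infinity.

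The bootstrap then runs as follows: start from $\f\in L^2 = H^{0,0}$, hence trivially $\f\in H^{s,\ell}$ for $s,\ell$ slightly negative; apply the estimate to gain, say, order $1/2$ in the Sobolev index and some decay at each step; iterate. One has to be a little careful that the regularization (using $\jap{\x}^{-N}$ cutoffs and letting $N\to 0$, or a standard symbol regularization so that all manipulations are a priori legitimate) commutes with the argument — this is routine via Lemma~\ref{convergence}-type statements. The target spaces $H^{k+1-m/2,-\c}$ and $H^{k+1/2,-\c-1/2}$ reflect exactly the two balances achievable: the first is the ``elliptic-type'' gain away from $\sigma=\pm1$, where $p_m$ itself is elliptic (order $m$ gives $m/2$ on each side of a commutator, so net gain $1-m/2$ relative to $k+1$... more precisely this is the scattering-elliptic estimate away from the radial set), and the second, $H^{k+1/2,-\c-1/2}$, is the genuine radial-point estimate at $\sigma=\pm1$ where one trades half a derivative against half a power of $\jap{x}^{-1}$, exactly as in Melrose's scattering calculus.

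The main obstacle I expect is the precise construction of the escape/weight function $b$ so that $\{p_m,b\}$ is sign-definite \emph{uniformly up to infinity} while still being a symbol in the correct (scattering) class $S(m,g)$ for the appropriate metric $g$ — in particular, controlling the region near $\sigma = \pm 1$, where the radial sets live and where the Hamilton vector field of $p_m$ degenerates in the relevant direction, and matching the behavior of the $\sigma$-dependent factor to the $\jap{x}^{-2\c}$ factor so their brackets reinforce rather than cancel. A secondary technical point is making sure the perturbation $Q = \Op(q)$ with $q\in S^{m,-\m}$, and especially the subprincipal part $V\in S^{m-1,-\m}$, contributes only lower-order errors controlled by the decay $\jap{x}^{-\m}$; this is where $\m>0$ is used, though one only needs it to beat the errors, not quantitatively. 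Handling the case $k$ large requires iterating finitely many times (the gain per step being bounded), but since $k$ is fixed and finite this is not a real difficulty.
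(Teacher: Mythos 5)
Your proposal follows essentially the same route as the paper: a radial-point/positive-commutator argument near spatial infinity using a weight built from the normalized radial variable $\eta=\hat x\cdot\hat v(\xi)$ (your $\sigma$), with $|x|^{\pm\c}$ factors and a monotone $\eta$-dependent factor arranged so that $\{p,\cdot\}$ has a definite sign modulo controlled errors, followed by Gårding, exploitation of the sign of $\Im z$, a symbol regularization to make the pairings legitimate, and a finite bootstrap in the Sobolev/decay indices. You also correctly identify that the non-trapping hypothesis is not needed for this theorem and that the crux is matching the $\eta$-cutoffs to the $|x|^{\pm\c}$ weights so the bracket terms reinforce — exactly the content of Lemma~\ref{radcl} and the escape-function construction $b^\d$ in the paper.
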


Now we show Proposition \ref{reg} follows from Theorem \ref{rad}.

\begin{proof}[Proof of Proposition \ref{reg}]
Suppose that $u\in L^2(\re^n)$ satisfies $(P-i)u=0$. By Proposition \ref{locre}, we have $u\in C^{\infty}(\re^n)$. In particular, we have $\i_M(x)\f\in C^\infty(\re^n)$ for any $M\geq 1$. Taking $\c=1/2$ and $k=m-1$, we obtain $\f\in H^{m/2,-1/2}\subset H^{(m-1)/2,-1/2}$. This completes the proof of Proposition \ref{reg}.
\end{proof}

Thus it remains to prove Theorem \ref{rad}.
In the following, we assume $\Im z>0$ without loss of generality. 
We may also assume $0<\c<\min(1/4,\m/2)$. 

\subsection*{Weight functions} 
We choose $\rho(t)\in C^\infty(\re)$ such that 
\[
\rho(t)=\begin{cases} 0 \ &\text{if }t\leq 0, \\ 1 \ &\text{if }t\geq 1/8, \end{cases}
\quad 0\leq \rho(t)\leq 1, \quad \rho'(t)\geq 0 \ \text{ for }t\in\re.
\]
For $\d\in (1/2,7/8)$, we set
\[
\rho_+^\d(t)= \rho(t-\d), 
\quad \rho_-^\d(t)=1-\rho(t+1-\d), 
\quad \rho_0^\d(t)= 1-\rho_+^\d(t)-\rho_-^\d(t),
\]
for $t\in\re$. 
We use the following notation:
\[
\hat x=\frac{x}{|x|}, \quad v(\x)=\pa_\x p_0(\x), \quad \hat v(\x)=\frac{v(\x)}{|v(\x)|}, \quad
\y=\y(x,\x)= \hat x\cdot\hat v(\x).
\]
Then we set 
\[
b^\d(x,\x)= \bigpare{\rho_-^\d(\y)|x|^\c +\rho_0^\d(\y)+\rho_+^\d(\y)|x|^{-\c}} e^{-\c\y},
\]
which is defined for $x, \x\in \re^n\setminus\{0\}$. We introduce cut-off functions and set 
\[
b_{M,\n}^\d(x,\x) = b^\d(x,\x)\oi_M(x)\oi_\n(\x), \quad x,\x\in\re^n. 
\]
with $M,\n>0$. We also write 
\begin{align*}
&\O_1(M,\n)=\bigset{(x,\x)}{M\leq |x|\leq 2M, |\x|\geq \n},\\
&\O_2(M,\n)= \bigset{(x,\x)}{|x|\geq M, \n\leq |\x|\leq 2\n}. 
\end{align*}
The next lemma is a key of the proof of Theorem \ref{rad}. 

\begin{lem}\label{radlem}
Let $1/2<\d<\tilde\d<7/8$, $k\in\re$, $0<\tilde M< M$, $0<\tilde\n<\n$, and write 
\[
B=\Op(b^\d_{M,\n}), \quad \tilde B =\Op(b^{\tilde\d}_{\tilde M,\tilde\n}). 
\]
If $\tilde M$ is sufficiently large, then: 
There are pseudodifferential operators $S=\Op(f_1)$, $T=\Op(f_2)$ such that
$f_1, f_2\in S(1,g)$ and $\supp[f_1]\subset \O_1(M,\n)$, $\supp[f_2]\subset \O_2(M,\n)$; 
If  $\f\in \mathcal{S}'$, $\tilde B\f\in H^{k-1+m/2,-1/2}$, $B(P-z)\f\in H^{k-(m-1)/2,1/2}$, 
$S\f\in H^{k+(m-1)/2}$ and $T\f\in L^2$  then 
\begin{align*}
B\f\in H^k\cap H^{k+(m-1)/2,-1/2}.
\end{align*}
Moreover, For any $N>0$ and $k\geq 0$ there is $C>0$ such that 
\begin{align}
&\norm{B\f}_{H^{k+(m-1)/2,-1/2}}^2 +(\Im z)\norm{B\f}_{H^k}^2 \nonumber \\
&\quad \leq C\bigpare{\norm{B(P-z)\f}_{H^{k-(m-1)/2,1/2}}^2  
+\norm{\tilde B\f}_{H^{k-1+m/2,-1}}^2 \nonumber \\
&\quad \quad \quad +\norm{S\f}_{H^{k+(m-1)/2}}^2 +\norm{T\f}_{L^2}^2
+\norm{\f}_{H^{-N,-N}}^2}. \label{eq-1}
\end{align}
\end{lem}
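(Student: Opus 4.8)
The plan is to run a positive commutator (Mourre-type) argument with the weight $b^\d$ as the symbol of the conjugating operator. The essential structure of $b^\d$ is $e^{-\c\y}$ times a radial factor $|x|^{\pm\c}$ (or $1$) depending on the sign region of $\y=\hat x\cdot\hat v(\x)$; the exponential $e^{-\c\y}$ is a monotone quantity along the Hamilton flow of $p_m$ (since $\frac{d}{dt}(\hat x\cdot\hat v)$ is essentially $|v(\x)|/|x|$ times $1-\y^2\geq 0$ at leading order), while the power $|x|^{\pm\c}$ is a gain/loss factor that is favorable in the incoming region ($\y\approx -1$, power $+\c$) and must be controlled by the a priori hypotheses in the outgoing region. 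Concretely, I would first compute the Poisson bracket $\{p_m, (b^\d)^2\}$ and $\{p_0,(b^\d)^2\}$ and verify that, modulo terms supported in $\O_1(M,\n)\cup\O_2(M,\n)$ (absorbed into $S\f$, $T\f$) and terms of lower order (absorbed into $\tilde B\f$ and $\f\in H^{-N,-N}$), it has the form
\[
\{p_m,(b^\d)^2\} = -c(x,\x)(b^\d)^2 + (\text{lower order}) + (\text{$\O_1\cup\O_2$-supported}),
\]
with $c\geq \c_0\,|x|^{-1}|\x|^{m-1}$ for some $\c_0>0$ on the support of $b^\d$ away from the cutoffs. The $e^{-\c\y}$ factor is precisely what produces the definite sign; this is the heart of the construction of the escaping function.

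Next I would pass from symbols to operators. Write $A = \Op((b^\d)^2)$ (or better, form $B^*\langle D_x\rangle^k\langle x\rangle^{-1}B$-type weighted versions to capture the two Sobolev indices $H^k$ and $H^{k+(m-1)/2,-1/2}$ simultaneously), compute
\[
\frac{d}{d\text{(nothing)}}\ \longrightarrow\ \Im\,(A(P-z)\f,\f) = \tfrac{1}{2i}([P,A]\f,\f) - (\Im z)(A\f,\f) + (\text{from }\Im P\text{-part, i.e. the skew part of }Q),
\]
and note that $\Im z>0$ gives the $(\Im z)\norm{B\f}_{H^k}^2$ term with the correct sign on the left. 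The principal symbol of $\frac{1}{2i}[P,A]$ is $\tfrac12\{p,(b^\d)^2\}$, which by the symbol computation above is $\leq -\tfrac{\c_0}{2}\langle x\rangle^{-1}|\x|^{m-1}(b^\d)^2$ plus the controllable error terms; the long-range perturbation $q\in S^{m,-\m}$ and $V\in S^{m-1,-\m}$ contribute $\{q,(b^\d)^2\}$ which, because of the extra $\langle x\rangle^{-\m}$ decay and $0<\c<\m/2$, is itself absorbed into the main negative term for $M$ large (this is exactly where "$\tilde M$ sufficiently large" is used). Sharp Gårding applied to the resulting nonnegative symbol turns the symbol inequality into the operator inequality; the Cauchy–Schwarz bound $|(A(P-z)\f,\f)|\leq \tfrac14\norm{B\f}_{H^{k+(m-1)/2,-1/2}}^2 + C\norm{B(P-z)\f}_{H^{k-(m-1)/2,1/2}}^2$ lets us move the source term to the right, and the remaining terms supported in $\O_1$, $\O_2$ are estimated by $\norm{S\f}_{H^{k+(m-1)/2}}^2+\norm{T\f}_{L^2}^2$ using the support conditions on $f_1,f_2$. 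This yields \eqref{eq-1}, and the membership statement $B\f\in H^k\cap H^{k+(m-1)/2,-1/2}$ follows by the usual regularization argument (insert $\langle \e D_x\rangle^{-s}\langle\e x\rangle^{-s}$, derive a uniform-in-$\e$ estimate, let $\e\to0$; here the hypotheses $\tilde B\f\in H^{k-1+m/2,-1/2}$, $S\f\in H^{k+(m-1)/2}$, $T\f\in L^2$ are what keep the right-hand side finite).

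The main obstacle is making the Gårding/commutator step genuinely rigorous rather than formal: the weight $b^\d$ is only $S(1,g)$ with a metric $g$ adapted to the two-microlocal scales $|x|\to\infty$ and $|\x|\to\infty$, it is singular at $x=0$ and $\x=0$ (handled by the cutoffs $\oi_M(x)\oi_\n(\x)$, whose commutator with $P$ is exactly what generates the $\O_1,\O_2$-supported remainders), and one must track how the two weighted norms $H^k$ and $H^{k+(m-1)/2,-1/2}$ interact under the flow — the gain is $(m-1)/2$ derivatives at the cost of one power of $\langle x\rangle$, reflecting that the commutator gains $|\x|^{m-1}$ and loses $\langle x\rangle^{-1}$. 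Care is also needed with the regularization because the regularizer does not commute with $\langle x\rangle$-weights and with $P$, producing additional error terms that must be shown to be uniformly bounded and then to vanish; this is routine but is where most of the technical length lives. I would also need to check that $f_1,f_2$ can genuinely be taken in $S(1,g)$ with the stated supports — they arise from $\i'(\cdot/M)$, $\oi'(\cdot/\n)$ composed with the flow derivative, so this is a matter of bookkeeping with the symbol calculus.
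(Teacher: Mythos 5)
Your overall strategy matches the paper's (the chain Lemma~\ref{radcl} $\to$ Lemma~\ref{radcom} $\to$ Lemma~\ref{radres}, followed by the $X_L$- and $\langle\e D_x\rangle^{-1}$-regularization): a positive commutator argument with escape function $b^\d$, sharp G\aa rding to pass to an operator inequality, Cauchy--Schwarz to peel off the $B(P-z)\f$ source term, absorption of the cutoff-commutator errors into $\O_1$- and $\O_2$-supported symbols giving $S$ and $T$, absorption of the long-range perturbation $q$ for $M$ large (this is where $\c<\m/2$ is used), and a regularization step to go from $\mathcal S(\re^n)$ to general $\f$ under the a priori hypotheses.

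There is, however, a conceptual error in what you claim produces the positivity of the commutator, and it is the very heart of the radial point mechanism. You write that $e^{-\c\y}$ ``is precisely what produces the definite sign'' and that the weight $|x|^{-\c}$ in the outgoing region ``must be controlled by the a priori hypotheses.'' Neither is right. Differentiating $e^{-\c\y}$ along $H_{p_0}$ produces a factor $(1-\y^2)$, which \emph{vanishes} at the radial sets $\y=\pm1$, so the exponential gives no positivity there whatsoever. On $\supp\rho_{\pm}$ the favorable sign comes entirely from the radial weights $|x|^{\pm\c}$: since $v\cdot\pa_x|x|=|v|\,\y$, one has $v\cdot\pa_x|x|^{\pm\c}=\pm\c\,\y\,|v|\,|x|^{\pm\c-1}$, which is strictly negative on $\supp\rho_-$ (where $\y<0$, exponent $+\c$) and equally strictly negative on $\supp\rho_+$ (where $\y>0$, exponent $-\c$). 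The exponential handles only the transition zone $\supp\rho_0$, where $1-\y^2$ is bounded away from zero. Thus the outgoing region is not a ``bad'' region to be compensated by hypotheses; if it were, the symbol inequality of Lemma~\ref{radcl} would fail pointwise and no a priori regularity could rescue the argument. The hypotheses on $\tilde B\f$, $S\f$, $T\f$ serve instead to seed and close the bootstrap of Theorem~\ref{rad} and to bound the cutoff-supported remainders, not to counterweight a wrong sign. One more small slip: with $\Im z>0$ and $A=A^*$, the identity reads $2\Im(A(P-z)\f,\f)=\tfrac{1}{i}(\f,[P,A]\f)+2(\Im z)(\f,A\f)$, so the $\Im z$ term carries a plus sign and lands on the favorable side of the estimate, as in the paper.
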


\begin{rem}
The constant $C$ in the lemma is independent of $\f$ and $z\in\co\setminus\re$. 
We note we assume $\tilde B\f\in H^{k+(m-1)/2,-1/2}$ for technical reasons, though only the norm 
of $\tilde B\f$ in $H^{k,-1}$ appears in the RHS of \eqref{eq-1}. 
\end{rem}

Theorem \ref{rad} follows from Lemma \ref{radlem}. 

\begin{proof}[Proof of Theorem \ref{rad}]
For $j=0,1,2,\dots$, we choose $\n_{j}$ and $\tilde\n_{j}$ so that 
\[
0<\tilde\n_0<\n_0=\tilde\n_{1} <\n_{1}=\tilde\n_2<\n_2=\cdots <\d_0<\infty
\]
with an arbitrarily fixed $\d_0>0$. We then choose $M_{j}$ and $\tilde M_{j}$ so that 
the claim of Lemma \ref{radlem} holds with $k=j/2$, $M=M_{j}$, $\tilde M=\tilde M_{j}$ and 
\[
0<\tilde M_0<M_0=\tilde M_{1}< M_{1}=\tilde M_2<M_2=\cdots.
\]
We also set $\d_j =(1+2^{-j})/4$ and $\tilde \d_j = \d_{j-1} =(1+2\cdot2^{-j})/4$
for $j=0, 1,2,\dots$. 
We write $B_j=\Op(b^{\d_j}_{M_{j},\n_{j}})$, 
$\tilde B_j=\Op(b^{\tilde\d_j}_{\tilde M_{j},\tilde\n_{j}})=B_{j-1}$. 

Suppose $\f\in L^2$ and $(P-z)\f\in \mathcal{S}(\re^n)$. Then we note 
\[
B_j(P-z)\f\in \mathcal{S}(\re^n).
\]
At first, we have $\tilde B_0\f\in H^{0,-\c}\subset H^{0,-1/2}$. 
By Lemma \ref{radlem} with $k=1-m/2$, we learn $\tilde B_{1}\f=B_0\f \in H^{1-m/2}\cap H^{1/2,-1/2}$, 
provided $S\f\in H^{1/2}$ and $T\f\in L^2$, which are satisfied under the assumptions of Theorem \ref{rad}
(with $M_0\leq M$). 
Then we use Lemma \ref{radlem} again with $k=(3-m)/2$ to learn $\tilde B_2\f=B_1\f\in H^{(3-m)/2}\cap H^{1,-1/2}$. 
Iterating this procedure $2k$-times, we arrive at 
\[
B_{2k}\f\in H^{k+1-m/2}\cap H^{k+1/2,-1/2}. 
\]
Note that conditions $S\f\in H^{k/2+1/2}$ and $T\f\in L^2$ are satisfied since $\chi_M(x)\f\in C^{\infty}(\re^n)$. 
Now we use the first inclusion $B_{2k}\f\in H^{k+1-m/2}$. 
We recall, by the assumption, $\i_{M}\f\in H^{k+1-m/2}$, and this implies 
\[
B_{2k}\f+\i_M(x)\f \in H^{k+1-m/2}. 
\]
Since  
\[
b_{M,\n}+\i_{M}(x) \geq c_0\jap{x}^{-\c}, \quad |\x|\geq 2\n, 
\]
by the elliptic estimates (or the sharp G\aa rding inequality), we have $\f\in H^{k+1-m/2,-\c}$. 
$\f\in H^{k+1/2,-\c-1/2}$ follows from $B_{2k}\f\in H^{k+1/2,-1/2}$ by the same argument. 
\end{proof}

For the proof of Lemma \ref{radlem}, we compute the commutator of $B$ and $P$, and then 
use a commutator inequality. 
We write  $b=b^\d_{M,\n}$, $\tilde b = b^{\tilde\d}_{\tilde M,\tilde\n}$, 
$\rho^\d_*=\rho_*$ and $\tilde\rho_*=\rho^{\tilde\d}_*$, where $*=+,-$, or $0$. 
The following lower bound for the Poisson bracket is crucial in the proof of Lemma \ref{radlem}. 

\begin{lem}\label{radcl}
Let $k, M$ and $\n$ be as in Lemma \ref{radlem}. If $M$ is sufficiently large, there are symbols 
$f_1, f_2\in S(1,g)$ such that $\supp[f_1]\subset\O_1(M,\n)$, 
$\supp[f_2]\subset \O_2(M,\n)$, $f_1,f_2\geq 0$, $f_2\leq C\jap{x}^{-(1+\m-2\c)/2}b$, 
and $\d_4>0$ such that 
\[
\{p,\jap{\x}^{2k}b^2\} \geq \d_4 \frac{|v|}{|x|} \jap{\x}^{2k}b^2 
-\jap{\x}^{2k+m-1}f_1^2 -f_2^2. 
\]
\end{lem}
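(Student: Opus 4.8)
\textbf{Proof proposal for Lemma \ref{radcl}.}

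The plan is to expand $\{p, \jap{\x}^{2k}b^2\}$ and show that the leading term has a definite sign, with the error terms absorbed into $f_1$, $f_2$ and the main positive term. First I would split $p = p_m + V$ and write $\jap{\x}^{2k}b^2 = \jap{\x}^{2k} e^{-2\c\y}\bigpare{\rho_-^\d |x|^\c + \rho_0^\d + \rho_+^\d |x|^{-\c}}^2$. Since $p_m$ is homogeneous of degree $m$ in $\x$, Euler's identity gives $\x\cdot\pa_\x p_m = m p_m$, and on (a neighborhood of) the characteristic set $p_m = 0$ this kills the contribution that would otherwise be dangerous; more precisely the term $\{p_m, \jap{\x}^{2k}\} b^2$ is proportional to $p_m$ times something, hence can be controlled by $B(P-z)\f$-type quantities or localized to $\O_1, \O_2$ using that $b$ is supported in $|x|\geq M$, $|\x|\geq \n$. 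The heart of the computation is $\{p_m, b^2\}$. Here I would use that $b$ depends on $x$ only through $|x|$ and $\y = \hat x\cdot\hat v(\x)$, so $\pa_x b$ and $\pa_\x b$ can be written explicitly in terms of $\rho_*$, $\rho_*'$, $|x|^{\pm\c}$, $e^{-\c\y}$, and the derivatives of $\y$. The key geometric fact is $\pa_\x p_m \cdot \pa_x(\text{anything depending on }|x|,\y)$: since $v(\x) = \pa_\x p_0(\x)$ and $\pa_\x p_m = v(\x) + O(\jap{x}^{-\m})\jap{\x}^{m-1}$ by Assumption \ref{assa}, the flow direction is essentially $\hat v$, and $\hat x\cdot \pa_\x p_m / |\pa_\x p_m| \approx \y$.

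The crucial sign computation is as follows. Modulo lower-order and long-range error, $\{p_m, f(|x|,\y)\} \approx \pa_\x p_m\cdot\pa_x f = |v|\bigpare{\pa_{|x|} f \cdot \y + \pa_\y f \cdot \frac{1 - \y^2}{|x|}}$ (using $\hat v\cdot\pa_x|x| = \y$ and $\hat v\cdot\pa_x\y = (1-\y^2)/|x|$, since $\pa_x\y = (\hat v - \y\hat x)/|x|$), plus a term $-\pa_x p_m\cdot\pa_\x f$ which is $O(\jap{x}^{-1-\m})$ because $\pa_x p_m$ decays. Applying this with $f = b^2 = e^{-2\c\y} g(|x|,\y)^2$ where $g = \rho_-^\d|x|^\c + \rho_0^\d + \rho_+^\d|x|^{-\c}$: the $e^{-2\c\y}$ factor contributes $\pa_\y(e^{-2\c\y}) = -2\c e^{-2\c\y}$, and since $\y\in[\d-1,\d]\subset(-1/2,7/8)$ on $\supp b$ one has $1-\y^2 \geq c > 0$, giving the dominant positive term $\d_4 \frac{|v|}{|x|}\jap{\x}^{2k}b^2$ from $2\c\frac{1-\y^2}{|x|}\jap{\x}^{2k}b^2$. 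The radial derivative term $\pa_{|x|}b^2 \cdot \y$: on $\supp\rho_-^\d$ (so $\y < \d$, which is negative when $\d < $ ... no, $\d>1/2$, but $\rho_-^\d(\y) = 1 - \rho(\y+1-\d)$ is supported where $\y + 1 - \d \leq 1/8$, i.e.\ $\y \leq \d - 7/8 < 0$) we get $\y < 0$ and $\pa_{|x|}|x|^\c > 0$, so $\y\,\pa_{|x|}b^2 \leq 0$ — wait, that has the wrong sign, so I must be more careful: actually on $\rho_-^\d$ one has $|x|^\c$ growing, $\y<0$, product $\y\cdot\c|x|^{\c-1}<0$, which is \emph{bad}. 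The resolution is that on this region we are "incoming" and the $e^{-\c\y}$ weight together with the $|x|^\c$ growth and the sign of $\rho'$ must be arranged so that, after also using the $\rho_{\pm}'$ terms (supported in $\O\cap\{\d-7/8<\y<\d-3/4\}$ etc.), the dangerous contributions are either sign-definite in the right direction or supported where we introduce $f_1$ (the region $M\leq|x|\leq 2M$, from $\pa_x\oi_M$) and $f_2$ (the region $\n\leq|\x|\leq 2\n$, from $\pa_\x\oi_\n$, which is why $f_2\leq C\jap{x}^{-(1+\m-2\c)/2}b$).

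So concretely the steps are: (1) compute $\{p,\jap{\x}^{2k}b_{M,\n}^2\}$ exactly, separating the $\jap{\x}^{2k}$ derivative (Euler/characteristic-set term), the cutoff derivatives $\oi_M', \oi_\n'$ (which generate $f_1^2$ and $f_2^2$ respectively — note $\pa_x\oi_M$ forces $|x|\sim M$ giving $\supp f_1\subset\O_1$, and $\pa_\x\oi_\n$ forces $|\x|\sim\n$ giving $\supp f_2\subset\O_2$), the long-range remainder from $\pa_x p_m$ and from $\pa_\x p_m - v(\x)$ (both $O(\jap{x}^{-\m})$, absorbed into $\d_4\frac{|v|}{|x|}b^2$ for $M$ large since $\m > 2\c$), and the main term $\{p_0(\x), b^2\}$; (2) for the main term use the explicit $(|x|,\y)$-dependence and the bound $1-\y^2\geq c$ on $\supp b$ to extract $2\c\frac{|v|}{|x|}(1-\y^2)b^2$, checking that the radial-derivative terms involving $\rho_\pm'$ and $\pa_{|x|}|x|^{\pm\c}$ have the correct sign or are dominated — this is where choosing $\rho$ with $\rho'\geq 0$ and the specific structure $\rho_-^\d|x|^\c + \rho_0^\d + \rho_+^\d|x|^{-\c}$ pays off, since moving from the incoming region ($|x|^\c$) through $\rho_0$ to the outgoing region ($|x|^{-\c}$) as $\y$ increases is monotone in the right way; (3) also bound the contribution of $V\in S^{m-1,-\m}$ to $\{p,\cdot\}$, which is lower order in $\x$ (gains one power) and decaying in $x$, hence absorbable. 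The main obstacle will be step (2): carefully tracking the signs of all terms coming from differentiating $b^2$, in particular verifying that wherever the naive sign is wrong (e.g.\ the radial term on $\supp\rho_-^\d$, or the $\rho_\pm'$ transition terms) the factor $e^{-\c\y}$, the relative size of $|x|^{\pm\c}$, and the largeness of $M$ combine to make the total $\geq \d_4\frac{|v|}{|x|}\jap{\x}^{2k}b^2$ after subtracting $\jap{\x}^{2k+m-1}f_1^2 + f_2^2$. The bound $f_2\leq C\jap{x}^{-(1+\m-2\c)/2}b$ is needed precisely so that $f_2^2$ on $\O_2$ is itself of the form $\jap{x}^{-1-\m+2\c}b^2$, i.e.\ a genuine perturbation of the main term, controllable because $\m>2\c$; I would verify it falls out directly from $f_2^2 \sim |\pa_\x\oi_\n|^2|\pa_\x p_0|^2 b^2 / (\text{stuff})$ with the $\jap{x}$-weight coming from the one factor of $|x|^{-1}$ in $\{p_0,b^2\}/b^2$ combined with the $\jap{x}^{-\m}$ available to spare.
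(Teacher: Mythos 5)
Your proposal captures the skeleton of the argument --- the identity $\hat v\cdot\pa_x\y=(1-\y^2)/|x|$ and $\hat v\cdot\pa_x|x|=\y$, and the recognition that $\pa_x\oi_M$ produces $f_1$ supported in $\O_1$ while $\pa_\x\oi_\n$ produces $f_2$ supported in $\O_2$. However, several of the key claims are incorrect and the part you yourself flag as ``the main obstacle'' is left unresolved in a way that would stall a detailed write-up.

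First, the decomposition. The paper writes $p=p_0+q$ with the \emph{constant-coefficient} top-order part $p_0(\x)$, so that $\{p_0,\jap{\x}^{2k}\}\equiv 0$ and $\{p_0,b\}$ consists of a single $\pa_\x p_0\cdot\pa_x b$ contribution --- the $\pa_x p_0$-term vanishes identically. Your choice $p=p_m+V$ brings in $\pa_x p_m\neq 0$, which is workable but loses this simplification and is the source of your confusion with Euler's identity: $\{p_m,\jap{\x}^{2k}\}=\pm\pa_x p_m\cdot\pa_\x\jap{\x}^{2k}$ (the other term dies because $\pa_x\jap{\x}^{2k}=0$), which is \emph{not} proportional to $p_m$ and has nothing to do with the characteristic set. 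It is controlled by the $\jap{x}^{-1-\m}$ decay of $\pa_x p_m$, exactly like the $q$-contribution, not by $p_m=0$ or by $B(P-z)\f$.

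Second, and more seriously, two of your sign/support claims are false. (a) It is not true that $\y\in[\d-1,\d]$ on $\supp b$: since $\rho_-+\rho_0+\rho_+\equiv 1$ and (for $|x|\geq 1$) the factor $\rho_-|x|^\c+\rho_0+\rho_+|x|^{-\c}\geq|x|^{-\c}>0$, the symbol $b$ is nonvanishing on the whole cone $\{|x|\geq M,|\x|\geq\n\}$, i.e.\ for all $\y\in[-1,1]$. Consequently (b) $1-\y^2$ does \emph{not} stay bounded away from $0$ on $\supp b$, and the claim that the dominant positivity comes from $\pa_\y(e^{-2\c\y})\cdot\frac{(1-\y^2)}{|x|}$ cannot hold near $\y=\pm 1$. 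This is precisely why the paper keeps the radial terms $\pm\y\rho_\pm|x|^{\pm\c}$, the very terms you label as having the ``wrong sign'': on $\supp\rho_-$ one has $\y\leq\d-7/8<0$, so $-\y\rho_-|x|^\c\geq(7/8-\d)\rho_-|x|^\c>0$, and on $\supp\rho_+$ one has $\y\geq\d>0$, so $\y\rho_+|x|^{-\c}\geq\d\rho_+|x|^{-\c}>0$; together with $\rho_-'(|x|^\c-1)\leq 0$ and $\rho_+'(|x|^{-\c}-1)\leq 0$ and the $\rho_0$-region bound $1-\y^2\geq\min(1-(\d-1)^2,1-(\d+1/8)^2)>0$, these are what produce $\d_3>0$. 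In other words, where $1-\y^2$ degenerates, the radial monotonicity terms save the estimate --- you dismissed the rescue crew. Finally, the paper's $f_1,f_2$ arise from $r_0,r_1$ (supported in $\O_1$) and $r_2$ (supported in $\O_2$), with the bound on $f_2$ coming from the explicit form of $r_2=-b^\d\oi_M(\pa_xq)\cdot(\pa_\x\oi_\n)\in S(\jap{x}^{\c-1-\m},g)$, matching your heuristic; that part is fine.
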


\begin{proof} We first note 
\[
v\cdot \pa_x\y = v\cdot \frac{\pa \hat x}{\pa x} \hat v 
= |v|\biggjap{\hat v, \biggpare{\frac{E}{|x|}-\frac{x\otimes x}{|x|^3}} \hat v}
=\frac{|v|}{|x|}(1-\y^2),
\]
where $E$ denotes the identity matrix. We also note 
\[
\rho_0' = -\rho_+'-\rho_-', \quad \pa_x |x| =\hat x, \quad 
v\cdot (\pa_x|x|) =|v|\hat v\cdot \hat x =|v|\y.
\]
Using these, we compute: 
\begin{align*}
& \{p_0,b\} = v\cdot\pa_x b \\
&= (v\cdot\pa_x\y)\bigbra{\rho'_- |x|^\c +\rho_0' +\rho_+'|x|^{-\c} 
-\c \bigpare{\rho_-|x|^\c +\rho_0 +\rho_+|x|^{-\c}}}\times \\
&\qquad \times \oi_{M}(x)\oi_{\n}(\x) e^{-\c\y}\\
&\quad + (v\cdot \pa_x |x|)\bigpare{\c\rho_-|x|^{\c-1} -\c \rho_+|x|^{-\c-1}}
\oi_{M}(x)\oi_{\n}(\x) e^{-\c\y}\\
&\quad + (v\cdot\pa_x|x|)\bigpare{\rho_-|x|^\c +\rho_0+\rho_+|x|^{-\c}}
M^{-1}\oi'(|x|/M)\oi_{\n}(\x)e^{-\c\y}\\
&= \frac{|v|}{|x|}(1-\y^2)\bigbra{\rho_-'(|x|^\c-1)+\rho'_+(|x|^{-\c}-1)-\c(\rho_-|x|^\c+\rho_0+\rho_+|x|^{-\c})}
\times \\
&\qquad \times \oi_{M}(x)\oi_{\n}(\x) e^{-\c\y}
+ \c\frac{|v|}{|x|}\bigpare{\y\rho_-|x|^\c-\y\rho_+|x|^{-\c}}\oi_{M}(x)\oi_{\n}(\x)e^{-\c\y}+r_0,
\end{align*}
where 
\[
r_0(x,\x)= |v(\x)|\y(x,\x) b^\d(x,\x)
M^{-1}\oi'(|x|/M)\oi_{\n}(\x), 
\]
which is supported in $\O_1(M,\n)$. 
We may suppose $M\geq 1$, and then 
\[
\rho_-'(|x|^\c-1)\leq 0, \quad \rho_+'(|x|^{-\c}-1)\leq 0
\]
on the support of $b$. We also note
\[
\y\rho_-(\y)\leq (-7/8+\d)\rho_-(\y), \  -\y\rho_+(\y)\leq -\d\rho_+(\y), 
\]
and
\[
\ (1-\y^2)\rho_0(\y)\geq \min(1-(\d-1)^2,1-(\d+1/8)^2)\rho_0(\y). 
\]
We set 
\[
\d_3= \min(7/8-\d, \d, 1-(\d-1)^2,1-(\d+1/8)^2)>0. 
\]
We substitute these inequality to the above formula on $\{p_0,b\}$ to learn 
\begin{align*}
\{p_0,b\} &\leq -\c\d_3 \frac{|v|}{|x|}\bigbra{ \rho_0+\rho_-|x|^{\c} +\rho_+|x|^{-\c}}
\oi_{M}(x)\oi_{\n}(\x)e^{-\c\y} +r_0\\
&\leq -\d_3\c \frac{|v|}{|x|}b (x,\x) +r_0(x,\x). 
\end{align*}
Then we have 
\[
-\{p_0,b^2\} =-2b\{p_0,b\} \geq 2\d_3\c\frac{|v|}{|x|} b^2 +2br_0.
\]
This also implies
\begin{equation}\label{eq-2}
-\{p_0,\jap{\x}^{2k}b^2\} =-2b\{p_0,b\}\jap{\x}^{2k} 
\geq 2\d_3\c\frac{|v|}{|x|} \jap{\x}^{2k}b^2 +2\jap{\x}^{2k}br_0.
\end{equation}

On the other hand, we have $\{q,\jap{\x}^{2k}b^2\}\in S(\jap{x}^{-\m+2\c-1}\jap{\x}^{2k+m-1}, g)$. 
We consider this function in more detail.  
We note, for any $\a,\b\in\ze_+^n$, 
\begin{equation}\label{eq-3}
\bigabs{\pa_x^\a\pa_\x^\b b^\d(x,\x)}\leq C_{\a\b}|x|^{\c-|\a|}|\x|^{-|\b|},
\quad x, \x\neq 0,
\end{equation}
with some $C_{\a\b}>0$. 
We also note 
\begin{align*}
\{q, b\} 
&= \{q,b^\d\}\oi_{M}(x)\oi_{\n}(\x) 
+ b^\d\{q,\oi_{M}(x)\oi_{\n}(\x)\} \\
&= \{q,b^\d\}\oi_{M}(x)\oi_{\n}(\x) +r_1+r_2,
\end{align*}
where 
\[
r_1=b^\d(\pa_\x q)\cdot(\pa_x\oi_{M})\oi_{\n}(\x), \quad
r_2= -b^\d\oi_{M}(x)(\pa_x q)\cdot(\pa_\x\oi_{\n}).
\]
We observe that $r_1$ is supported in $\O_1(M,\n)$, 
and $r_1\in S(\jap{\x}^{m-1},g)$; $r_2$ is supported in  $\O_2(M,\n)$
and $r_2\in S(\jap{x}^{-1-\m+\c},g)$. 
Using \eqref{eq-3}, we have 
\begin{align*}
\bigabs{\{q,b^\d\}\oi_M(x)\oi_{\n}(\x) }
&\leq C\jap{x}^{-\m+\c-1}\jap{\x}^{m-1}\oi_M(x)\oi_\n(\x)\\
&\leq C'M^{-(\m-2\c)} \frac{|v(\x)|}{|x|} b(x,\x)
\end{align*}
with some $C,C'>0$. Hence we learn 
\[
\{q,\jap{\x}^{2k}b^2\}\geq -2C'M^{-(\m-2\c)} \frac{|v(\x)|}{|x|} \jap{\x}^{2k}b^2
+2\jap{\x}^{2k}br_1+2\jap{\x}^{2k}br_2, 
\]
uniformly in $M\geq 1$. Combining this with \eqref{eq-2}, we learn 
\[
\{p,\jap{\x}^{2k}b^2\} \geq (2\d_3\c-2C'M^{-(\m-2\c)})\jap{\x}^{2k}\frac{|v|}{|x|} b^2 
+2\jap{\x}^{2k}b (r_0+r_1+r_2).
\]
We recall $\c<\m/2$. We now choose $M$ so large that $2C'M^{-(\m-2\c)}\leq \d_3\c$, and we obtain 
\[
\{p,\jap{\x}^{2k}b^2\} \geq \d_3\c \frac{|v|}{|x|} \jap{\x}^{2k}b^2 +2\jap{\x}^{2k}b (r_0+r_1+r_2).
\]
We note $\supp[r_0+r_1]\Subset\O_1(M,\n)$ and $r_0+r_1\in S(\jap{\x}^{m-1},g)$, hence we can find 
$f_1\in S(1,g)$, $f_1\geq 0$, $\supp[f_1]\subset\O_1(M,\n)$ such that 
\[
2\jap{\x}^{2k}b(r_0+r_1)\geq -\jap{\x}^{2k+m-1}f_1^2.
\]
Similarly, since $\supp[r_2]\Subset\O_2(M,\n)$, $r_2\in S(\jap{x}^{\c-\m-1},g)$, we can find 
$f_2\in S(1,g)$, $f_2\geq 0$, $\supp[f_2]\subset\O_2(M,\n)$ such that 
\[
2\jap{\x}^{2k} b r_2\geq - f_2^2
\quad\text{and}\quad 
0\leq f_2\leq C \jap{x}^{-(1+\m-2\c)/2} b.
\]
By setting $\d_4=\d_3\c$, we arrive at the conclusion of the lemma. 
\end{proof}

We write 
\[
B=\Op(b), \quad \tilde B =\Op(\tilde b), \quad \L=\jap{D_x}^{(m-1)/2}\jap{x}^{-1/2}. 
\]

\begin{lem}\label{radcom} Under the above assumptions, there are pseudodifferential operators 
$S ,T,U, V$ and a constant $\d_4>0$ such that 
\[
-i[P, B \jap{D_x}^{2k} B] \geq \d_4 B\jap{D_x}^k |\L|^2\jap{D_x}^k B -S^*\jap{D_x}^{2k+m-1}S-T^*T-U -V, 
\]
where 
\begin{enumerate}
\item $S\in \Op S(1,g)$ and the symbol is supported in $\O_1(M,\n)$;
\item $T\in \Op S(1, g)$ and the symbol is supported in $\O_2(M,\n)$;
\item $U=\Op(u)$ with $u\in S(\jap{x}^{2\c-2}\jap{\x}^{2k+m-2},g)$ and or any $\a,\b\in\ze_+^n$, 
\[
|\pa_{x}^{\a}\pa_{\x}^{\b}u(x,\x)|\leq C\jap{x}^{-2-|\a|}\jap{\x}^{2k+m-2-|\b|} \tilde b(x,\x)^2; 
\]
\item $V\in \Op  S(\jap{x}^{-\infty}\jap{\x}^{-\infty},g)$.
\end{enumerate}
\end{lem}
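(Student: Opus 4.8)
The plan is to pass from the symbol inequality in Lemma~\ref{radcl} to the operator inequality via the sharp G\aa rding inequality, while controlling all the error terms produced by the Weyl calculus. First I would compute the commutator $-i[P,B\jap{D_x}^{2k}B]$ at the symbol level. Since $P=\Op(p)$ with $p\in S(\jap{\x}^m,g)$ (after separating the principal part $p_m$ and the lower-order parts $q,V$), and $B\jap{D_x}^{2k}B=\Op(c)$ with $c$ having principal symbol $\jap{\x}^{2k}b^2$ (modulo $S(\jap{x}^{2\c-2}\jap{\x}^{2k+m-2},g)$ by the composition formula, since $b\in S(\jap{x}^\c,g)$ after the cutoffs), the Weyl symbol of $i[P,B\jap{D_x}^{2k}B]$ is $\{p,\jap{\x}^{2k}b^2\}$ plus a remainder in $S(\jap{x}^{2\c-3}\jap{\x}^{2k+m-3},g)$ — note the Weyl calculus gains one extra order of decay in each variable because the even-order terms in the Moyal expansion cancel in the commutator. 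The term $\{p,\jap{\x}^{2k}b^2\}$ is exactly what Lemma~\ref{radcl} bounds below.

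Next I would insert the lower bound from Lemma~\ref{radcl}. Write the main term $\d_4\frac{|v|}{|x|}\jap{\x}^{2k}b^2$; since $|v(\x)|\sim|\x|^{m-1}$ on $\supp b$ (by Assumption~\ref{assa}) and $|x|^{-1}\sim\jap{x}^{-1}$ there, this is comparable to $\jap{\x}^{2k+m-1}\jap{x}^{-1}b^2$, which up to a symbol remainder of one lower order is the Weyl symbol of $B\jap{D_x}^k|\L|^2\jap{D_x}^kB$ (recall $|\L|^2=\jap{x}^{-1/2}\jap{D_x}^{m-1}\jap{x}^{-1/2}$). The negative terms $-\jap{\x}^{2k+m-1}f_1^2-f_2^2$ become, after applying the sharp G\aa rding inequality to $f_1,f_2\ge0$, operators $-S^*\jap{D_x}^{2k+m-1}S-T^*T$ modulo lower-order remainders, with $S=\Op(f_1)$, $T=\Op(f_2)$ supported in $\O_1(M,\n)$, $\O_2(M,\n)$ respectively; here I would absorb the G\aa rding remainders and the one-order-lower pieces of the main-term comparison into $U$ and $V$. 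All the symbol remainders accumulated so far — from the composition formula for $B\jap{D_x}^{2k}B$, from the Moyal expansion of the commutator beyond the Poisson bracket, from the $|v|/|x|$ versus $\jap{\x}^{m-1}\jap{x}^{-1}$ comparison, and from sharp G\aa rding — are supported where $b$ (hence $\tilde b$) is non-negligible and have the decay stated for $u$; the genuinely negligible (Schwartz-symbol) pieces coming from the cutoff $\oi_\n(\x)$ regions where $\jap{\x}$ is bounded, or from the $h^\infty$-type tails, go into $V\in\Op S(\jap{x}^{-\infty}\jap{\x}^{-\infty},g)$.

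The main obstacle I anticipate is bookkeeping: making sure every remainder symbol genuinely lands in the claimed class $S(\jap{x}^{2\c-2}\jap{\x}^{2k+m-2},g)$ with the pointwise bound $|\pa_x^\a\pa_\x^\b u|\le C\jap{x}^{-2-|\a|}\jap{\x}^{2k+m-2-|\b|}\tilde b^2$ — in particular that the remainders inherit a factor $\tilde b^2$ (equivalently, are supported, up to Schwartz errors, inside $\supp\tilde b$), which forces careful tracking of supports through each composition and through the sharp G\aa rding correction, and requires that $\tilde M<M$, $\tilde\n<\n$ so that $b$ and all its derivatives are supported where $\tilde b\ge c_0>0$. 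A secondary technical point is that the ``gain of one order'' in the Weyl-commutator expansion and the extra decay $\jap{x}^{-1}$ relative to $\jap{x}^\c b\cdot\jap{x}^{\c-1}$ must be combined correctly to reach the exponent $2\c-2$ in the $x$-weight of $u$; this is routine once the calculus is set up but is where an off-by-one error would hide. Everything else — the positivity of the main term, the signs, the identification of $B\jap{D_x}^k|\L|^2\jap{D_x}^kB$ — follows directly from Lemma~\ref{radcl} and Assumption~\ref{assa}.
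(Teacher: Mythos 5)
Your proposal takes essentially the same route as the paper: express $-i[P,B\jap{D_x}^{2k}B]$ via the Weyl calculus as the quantization of $\{p,\jap{\x}^{2k}b^2\}$ plus a lower-order remainder, invoke the pointwise lower bound of Lemma~\ref{radcl}, convert it to an operator inequality by the sharp G{\aa}rding inequality, and push all remainders into $U$ and $V$. That is the paper's argument.

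One detail you gloss over is worth spelling out, because it is where the support/decay bookkeeping for $U$ is actually enforced. The paper does not apply sharp G{\aa}rding to the raw excess
$\z = \{p,\jap{\x}^{2k}b^2\}-\d_4\tfrac{|v|}{|x|}\jap{\x}^{2k}b^2+\jap{\x}^{2k+m-1}f_1^2+f_2^2\ge 0$
directly, but to the normalized symbol $\z\,(b')^{-2}\in S(\jap{x}^{-1}\jap{\x}^{2k+m-1},g)$, where $b'=b^{\d'}_{M',\n'}$ is chosen with $\tilde M<M'<M$, $\tilde\n<\n'<\n$, $\d<\d'<\tilde\d$, so that $b'$ is elliptic on $\supp b$ and supported inside $\supp\tilde b$. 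Sharp G{\aa}rding then gives $\Op(\z(b')^{-2})\ge -C\jap{D_x}^{k-1+m/2}\jap{x}^{-2}\jap{D_x}^{k-1+m/2}$, and undoing the normalization produces an error of the form $-CB'\jap{D_x}^{k-1+m/2}\jap{x}^{-2}\jap{D_x}^{k-1+m/2}B'$ plus an expansion remainder in $S(\jap{x}^{-3}\jap{\x}^{2k+m-3},g)$ supported in $\supp b'$. This is precisely the mechanism that hands the G{\aa}rding error the factor $\tilde b^2$ and the exponent $\jap{x}^{-2}\jap{\x}^{2k+m-2}$ required in item (iii). Your proposal says ``absorb the G{\aa}rding remainders into $U$'' and notes that the remainders ``inherit a factor $\tilde b^2$,'' but applying sharp G{\aa}rding to $\z$ itself would only give a remainder with the right order of decay, not automatically the $\tilde b^2$ factor with the right constants; the division-by-$(b')^2$ trick is what makes this clean. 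Also, $S$ and $T$ must be built from slightly adjusted symbols $\tilde f_1, \tilde f_2$ (so that $\Op(\jap{\x}^{2k+m-1}f_1^2)\le S^*\jap{D_x}^{2k+m-1}S+R_1$ holds with a smoothing $R_1$), not taken literally as $\Op(f_1),\Op(f_2)$, but that is the same standard-calculus adjustment you allude to and it does not change the support classes. With these points made explicit, your outline matches the paper's proof.
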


In the proof of Lemma~\ref{radcom}, we use the following estimate: 

\begin{lem}\label{radell}
Suppose $a$ be a symbol such that $\supp[a]\subset\O'$, 
where $\O'=\bigset{(x,\x)}{|x|\geq M',|\x|\geq \n'}$ with 
$M'>\tilde M$, $\n'>\tilde\n$, and for any $\a,\b\in\ze_+^n$, 
\[
\bigabs{\pa_x^\a\pa_\x^\b a(x,\x)}\leq C_{\a\b}\jap{x}^{2\ell-|\a|}\jap{\x}^{2s-|\b|}\tilde b(x,\x)^2, 
\]
where $s,\ell\in\re$. Then for any $N$, there is $C,C_N>0$ such that 
\[
|\jap{\f,\Op(a)\f}|\leq C \norm{\tilde B\f}^2_{H^{s,\ell}}+C_N\norm{\f}^2_{H^{-N,-N}}, 
\quad \f\in\mathcal{S}(\re^n).
\]
\end{lem}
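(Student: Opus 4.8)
The plan is to prove Lemma~\ref{radell} by localizing the elliptic parameter region where $\tilde b$ is comparable to a positive function, and then reducing the bilinear form $\jap{\f, \Op(a)\f}$ to an expression controlled by $\tilde B \f = \Op(\tilde b)\f$ in the appropriate weighted Sobolev norm. The key observation is that on $\O'$ with $M' > \tilde M$ and $\n' > \tilde\n$, the symbol $\tilde b = b^{\tilde\d} \oi_{\tilde M}(x)\oi_{\tilde \n}(\x)$ has $\oi_{\tilde M}(x) = 1$ and $\oi_{\tilde\n}(\x) = 1$, so $\tilde b(x,\x) = b^{\tilde\d}(x,\x)$ there, and by the lower bound built into the definition of $b^{\tilde\d}$ (the summand $\rho_0^{\tilde\d}(\y) + \rho_\pm^{\tilde\d}(\y)|x|^{\mp\c}$ times $e^{-\c\y}$), one has $\tilde b(x,\x) \geq c_0 \jap{x}^{-\c}$ on $\O'$ for some $c_0 > 0$. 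Hence $\tilde b$ is elliptic on $\O'$ (as a symbol in $S(\jap{x}^\c, g)$, or more precisely bounded below by a fixed negative power of $\jap{x}$), uniformly down to $\jap{\x} \sim 1$.

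First I would write $a = a \cdot (\tilde b^2)^{-1} \cdot \tilde b^2$ as a product, using a smooth cutoff $\theta(x,\x)$ supported in $\O'$ and equal to $1$ on $\supp[a]$: set $e(x,\x) = a(x,\x)\, \theta(x,\x)\, \tilde b(x,\x)^{-2} \jap{x}^{-2\ell}\jap{\x}^{-2s}$, so that by the hypothesis $\bigabs{\pa_x^\a\pa_\x^\b a}\leq C_{\a\b}\jap{x}^{2\ell-|\a|}\jap{\x}^{2s-|\b|}\tilde b^2$ and the ellipticity of $\tilde b$ on $\O'$, the function $e$ lies in $S(1,g)$. Then $a = e \cdot \jap{x}^{2\ell}\jap{\x}^{2s}\tilde b^2$ exactly on $\supp[a]$, and by pseudodifferential calculus $\Op(a) = \Op(\tilde b)^* \Op(\jap{\x}^s \jap{x}^\ell) \Op(e) \Op(\jap{\x}^s \jap{x}^\ell)\Op(\tilde b) + \Op(r)$ up to reordering of factors, where the remainder $r \in S(\jap{x}^{-\infty}\jap{\x}^{-\infty}, g)$ collects the lower-order calculus errors and the commutators needed to move the weights past $\Op(\tilde b)$; such a remainder contributes $O(\norm{\f}_{H^{-N,-N}}^2)$ for any $N$. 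I would then estimate
\begin{align*}
\bigabs{\jap{\f, \Op(a)\f}} &\leq \bigabs{\jap{\Op(\jap{\x}^s\jap{x}^\ell)\Op(\tilde b)\f,\ \Op(e)\,\Op(\jap{\x}^s\jap{x}^\ell)\Op(\tilde b)\f}} + C_N\norm{\f}_{H^{-N,-N}}^2 \\
&\leq C\norm{\Op(e)}_{B(L^2)}\, \bignorm{\Op(\jap{\x}^s\jap{x}^\ell)\Op(\tilde b)\f}_{L^2}^2 + C_N\norm{\f}_{H^{-N,-N}}^2,
\end{align*}
and since $\Op(e)$ is bounded on $L^2$ (as $e \in S(1,g)$, by Calderón--Vaillancourt) and $\Op(\jap{\x}^s\jap{x}^\ell)\Op(\tilde b)\f$ differs from $\jap{D_x}^s\jap{x}^\ell \tilde B\f$ by an operator mapping into $H^{-N,-N}$-controlled errors, this is bounded by $C\norm{\tilde B\f}_{H^{s,\ell}}^2 + C_N\norm{\f}_{H^{-N,-N}}^2$.

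The main obstacle I anticipate is bookkeeping the weight-exchange remainders cleanly: moving $\jap{D_x}^s\jap{x}^\ell$ past $\Op(\tilde b)$ and recombining the two halves of $a$ into $\Op(\tilde b)^* (\cdots)\Op(\tilde b)$ generates commutators whose symbols are supported in $\O'$ but decay by one order in $x$ and $\x$ each time — I need to iterate the calculus enough times (or use an asymptotic expansion argument with the Beals-type characterization of $S(\jap{x}^{-\infty}\jap{\x}^{-\infty}, g)$) to absorb all of them into a genuinely negligible operator, and then argue that such an operator is bounded from $H^{-N,-N}$ to $H^{N,N}$ for every $N$, so that its quadratic form is dominated by $C_N\norm{\f}_{H^{-N,-N}}^2$. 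The ellipticity claim $\tilde b \geq c_0\jap{x}^{-\c}$ on $\O'$ (uniformly in $\jap{\x}\geq \n'$) is the only place the precise form of $b^{\tilde\d}$ enters, and it should be checked from the definition; everything else is routine symbol calculus in the Hörmander class $S(m,g)$.
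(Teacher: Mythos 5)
Your overall strategy is the same as the paper's: write $a = e\,\jap{x}^{2\ell}\jap{\x}^{2s}\tilde b^2$ with $e$ bounded via the ellipticity of $\tilde b$ on $\O'$, reorganize $\Op(a)$ into the sandwiched form $\tilde B\,\jap{x}^\ell\jap{D_x}^s\,\Op(e)\,\jap{D_x}^s\jap{x}^\ell\,\tilde B$ modulo a smoothing remainder, and invoke $L^2$-boundedness of $\Op(e)$. However, there is a technical gap in your claim that $e \in S(1,g)$ for the standard scattering metric $g=\jap{x}^{-2}dx^2+\jap{\x}^{-2}d\x^2$: the symbol $\tilde b$ is not in $S(\jap{x}^\c,g)$. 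Because of the factors $|x|^{\pm\c}$ and $e^{-\c\y}$ in $b^{\tilde\d}$, each derivative of $\tilde b$ costs an extra $\jap{x}^{2\c}$: on $\O'$ one has
\[
\bigabs{\pa_x^\a\pa_\x^\b \tilde b(x,\x)}\leq C_{\a\b}\jap{x}^{-|\a|+2\c(|\a|+|\b|)}\jap{\x}^{-|\b|}\,\tilde b(x,\x),
\]
and consequently $e=a\,\theta\,\tilde b^{-2}\jap{x}^{-2\ell}\jap{\x}^{-2s}$ lives only in $S(1,\tilde g)$ with the degraded metric
\[
\tilde g= \jap{x}^{-2+4\c}\,dx^2+\jap{x}^{4\c}\jap{\x}^{-2}\,d\x^2,
\]
not in $S(1,g)$. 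The paper introduces $\tilde g$ explicitly for exactly this reason. The argument still goes through because $0<\c<1/4$ guarantees $\tilde g$ is a valid H\"ormander metric (slowly varying, temperate, uncertainty principle satisfied), so the parametrix construction and the Calder\'on--Vaillancourt boundedness of $\Op(e)$ remain available, and the remainder sits in $S(\jap{x}^{-\infty}\jap{\x}^{-\infty},\tilde g)$. You should state the derivative bound on $\tilde b$ and work with $\tilde g$ throughout; without that, the claim ``$e\in S(1,g)$'' is simply false, and the subsequent calculus steps are not justified in the class you cite.
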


\begin{proof}
We note, for any $\a,\b\in\ze_+^n$, 
\[
\bigabs{\pa_x^\a\pa_\x^\b \tilde b(x,\x)}\leq C'_{\a\b}\jap{x}^{-|\a|+2\c(|\a+\b|)}\jap{\x}^{-|\b|}\tilde b(x,\x), 
\quad(x,\x)\in\O'. 
\]
We write $\tilde g= \jap{x}^{-2+4\c}dx^2+\jap{x}^{4\c}\jap{\x}^{-2}d\x^2$. 
Using the above estimate and the assumption on $a$, and following the construction of parametrices for elliptic operators, 
we can construct a symbol $h(x,\x)\in S(1,\tilde g)$ such that 
\[
\Op(a) = \tilde B \jap{x}^\ell \jap{D_x}^s \Op(h) \jap{D_x}^s \jap{x}^\ell \tilde B +R,
\]
where $R\in S(\jap{x}^{-\infty}\jap{\x}^{-\infty},\tilde g)$. The assertion follows from this since $\Op(h)$
is bounded in $L^2(\re^n)$. 
\end{proof}

\begin{proof}[Proof of Lemma \ref{radcom}] 
By the standard pseudodifferential operator calculus, we can find $\tilde f_1$, $\tilde f_2$ 
such that $\tilde f_j\in S(1,g)$, $\supp[\tilde f_j]\subset \O_j(M,\n)$, $j=1,2$, and 
\begin{align*}
&\Op(\jap{\x}^{2k+m-1}f_1^2)\leq \Op(\tilde f_1)^*\jap{D_x}^{2k+m-1}\Op(\tilde f_1) +R_1, \\
&\Op(\jap{x}^{2\c-1-\m}f_2^2)\leq \Op(\tilde f_2)^*\Op(\tilde f_2) +R_2,
\end{align*}
where $R_j$ are smoothing operators. We set $S=\Op(\tilde f_1)$ and $T=\Op(\tilde f_2)$. 
If we write
\[
\z(x,\x)= \{p,\jap{\x}^{2k}b^2\} - \d_4 \frac{|v|}{|x|} \jap{\x}^{2k}b^2 
+\jap{\x}^{2k+m-1}f_1^2 +f_2^2\geq 0. 
\]
We note, by the construction, $\z(x,\x) b'(x,\x)^{-2}\in S(\jap{x}^{-1}\jap{\x}^{2k+m-1},g)$,
where $b'=b_{M',\n'}$ with $\tilde M<M'<M$, $\tilde\n<\n'<\n$. 
Hence by the sharp G\aa rding inequality, we have 
\[
\Op(\z(b')^{-2})\geq -C\jap{D_x}^{k-1+m/2}\jap{x}^{-2}\jap{D_x}^{k-1+m/2} 
\]
with some $C>0$. Then by the asymptotic expansion, we learn 
\[
\Op(\z) \geq -CB' \jap{D_x}^{k-1+m/2}\jap{x}^{-2}\jap{D_x}^{k-1+m/2} B' -R_3, 
\]
where $R_3\in S(\jap{x}^{-3}\jap{\x}^{2k+m-3},g)$, and the symbol is supported in $\supp[b']$
modulo $\mathcal{S}(\re^{2d})$. 
Using Lemma \ref{radcl}, we can estimate $R_3$ and other error terms from below by 
$-C\tilde B \jap{D_x}^{k-1+m/2}\jap{x}^{-2}\jap{D_x}^{k-1+m/2}\tilde B$, 
modulo smoothing operators, and these will be included in $U$ to complete the proof. 
\end{proof}

\begin{lem}\label{radres}
For $\f\in \mathcal{S}(\re^n)$, the inequality \eqref{eq-1} holds, 
where $S=\Op(f_1)$, $T=\Op(f_2)$, $f_1, f_2\in S(1,g)$, and $\supp[f_1]\subset \O_1(M,\n)$, 
$\supp[f_2]\subset \O_2(M,\n)$. 
\end{lem}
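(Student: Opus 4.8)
The plan is to derive the energy inequality \eqref{eq-1} from the commutator lower bound in Lemma~\ref{radcom} by a standard positive-commutator argument, carefully tracking where the sign of $\Im z$ enters. First I would set $A = B\jap{D_x}^{2k}B$, which is a symmetric operator, and compute
\[
\frac{d}{dt}\,0 = \Im\jap{\f,(P-z)\f} \quad\text{is not quite the right starting point;}
\]
rather, since $\f\in\mathcal{S}$ one has exactly
\[
2\,\mathrm{Im}\,\jap{A\f,(P-z)\f} = \jap{\f,-i[P,A]\f} - 2(\Im z)\jap{\f,A\f}.
\]
Here $\jap{\f,A\f} = \norm{\jap{D_x}^k B\f}^2_{L^2} = \norm{B\f}^2_{H^k}$ (after commuting $\jap{D_x}^k$ through $B$ up to lower-order smoothing errors absorbed into the $\norm{\f}_{H^{-N,-N}}$ term), so the $(\Im z)$-term produces exactly the $+(\Im z)\norm{B\f}^2_{H^k}$ on the left of \eqref{eq-1}, using $\Im z>0$. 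The left side is bounded using Cauchy--Schwarz: $|\jap{A\f,(P-z)\f}| \le \norm{B(P-z)\f}_{H^{k-(m-1)/2,1/2}}\norm{B\f}_{H^{k+(m-1)/2,-1/2}} + (\text{smoothing})$, after inserting $\L^{-1}\L$ and moving the weight $\jap{x}^{-1/2}\jap{D_x}^{(m-1)/2}$ appropriately onto the two factors; the cross term is then absorbed into the main term on the left by Young's inequality.

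Next I would insert the lower bound from Lemma~\ref{radcom}:
\[
\jap{\f,-i[P,A]\f} \ge \d_4\norm{|\L|\jap{D_x}^k B\f}^2_{L^2} - \norm{\jap{D_x}^{k+(m-1)/2}S\f}^2 - \norm{T\f}^2 - \jap{\f,U\f} - \jap{\f,V\f}.
\]
The first term is $\d_4\norm{B\f}^2_{H^{k+(m-1)/2,-1/2}}$ up to commutator errors. The $S,T$ terms are exactly the $\norm{S\f}^2_{H^{k+(m-1)/2}}$ and $\norm{T\f}^2_{L^2}$ on the right of \eqref{eq-1}. The term $\jap{\f,V\f}$ is controlled by $\norm{\f}^2_{H^{-N,-N}}$ since $V$ is smoothing. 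The key remaining point is $\jap{\f,U\f}$: by Lemma~\ref{radcom}(iii), $u\in S(\jap{x}^{2\c-2}\jap{\x}^{2k+m-2},g)$ with the pointwise bound by $\jap{x}^{-2-|\a|}\jap{\x}^{2k+m-2-|\b|}\tilde b^2$, which is precisely the hypothesis of Lemma~\ref{radell} with $\ell = -1$, $s = k-1+m/2$; hence $|\jap{\f,U\f}| \le C\norm{\tilde B\f}^2_{H^{k-1+m/2,-1}} + C_N\norm{\f}^2_{H^{-N,-N}}$, which supplies exactly the $\norm{\tilde B\f}^2_{H^{k-1+m/2,-1}}$ term in \eqref{eq-1}. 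Assembling all pieces and moving the absorbable part of the Cauchy--Schwarz cross term to the left yields \eqref{eq-1}.

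The main obstacle I anticipate is the bookkeeping of the pseudodifferential commutator errors when moving factors of $\jap{D_x}^k$, $\jap{x}^{\pm1/2}$ and $\L^{\pm1}$ through $B$, $S$, $T$, and $P$. One must check that every such error lies in a symbol class whose associated operator is either (a) of the form handled by Lemma~\ref{radell} with support in $\supp[\tilde b]$, so it contributes to the $\norm{\tilde B\f}^2$ term, or (b) genuinely smoothing, so it contributes to $\norm{\f}^2_{H^{-N,-N}}$, or (c) supported in $\O_1(M,\n)$ or $\O_2(M,\n)$ with the right order, so it is absorbed into the $S$ or $T$ terms. The subtlety is that $B$ has a symbol growing like $\jap{x}^{\c}$ and the weight in $\L$ is $\jap{x}^{-1/2}$ with $\c<1/4$, so the commutators do gain decay, but one must verify the gain is enough each time; this is the routine-but-delicate part that the lemma's careful choice of exponents ($0<\c<\min(1/4,\m/2)$) is designed to make work. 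I would organize this by first establishing, as a preliminary observation, that $[\jap{D_x}^s\jap{x}^\ell, B]$ maps into $\Op S(\jap{x}^{\c-1+\cdots}\cdots,g)$ with a symbol supported near $\supp[b]$, and then apply it mechanically.
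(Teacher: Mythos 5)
Your route coincides with the paper's: set $A=B\jap{D_x}^{2k}B$, evaluate $\jap{\f,-i[P,A]\f}$ both exactly and via the operator lower bound of Lemma~\ref{radcom}, insert $\L^{-1}\L$ before applying Cauchy--Schwarz and Young's inequality, and handle the $U$-term through Lemma~\ref{radell} with $\ell=-1$, $s=k-1+m/2$ (you read off the correct exponents). The bookkeeping worry you flag is real, but it is exactly what the paper discharges inside Lemma~\ref{radcom} (the sharp G\aa rding step and the $\tilde B$-sandwich) together with Lemma~\ref{radell}.

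There is, however, a sign error in your opening identity which, as written, would break the argument. With the inner product anti-linear in its first slot (the paper's convention), a direct computation gives
\[
\jap{\f,-i[P,A]\f}=-2\,\Im\jap{A\f,(P-z)\f}-2(\Im z)\jap{\f,A\f},
\]
so $\jap{\f,-i[P,A]\f}$ must carry a \emph{minus} sign relative to your version. This is the sign the whole estimate hinges on: substituting the lower bound from Lemma~\ref{radcom} into the correct identity yields
\[
\d_4\norm{\L\jap{D_x}^kB\f}^2+2(\Im z)\norm{\jap{D_x}^kB\f}^2\le 2\bigabs{\jap{A\f,(P-z)\f}}+\jap{\f,(S^*\jap{D_x}^{2k+m-1}S+T^*T+U+V)\f},
\]
so that $\Im z>0$ contributes a coercive term on the left; your written identity would instead place $-2(\Im z)\norm{\jap{D_x}^kB\f}^2$ there and throw away the help from $\Im z$. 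Your subsequent remark that the $(\Im z)$-term ``produces exactly the $+(\Im z)\norm{B\f}^2_{H^k}$ on the left'' shows you know the intended outcome, so this is likely a transcription slip, but as stated the identity does not lead to \eqref{eq-1}.
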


\begin{proof}
We compute the commutator to obtain quadratic inequalities. 
For $\f\in\mathcal{S}(\re^n)$, we have 
\begin{align*}
&\bigjap{\f,-i[P,B\jap{D_x}^{2k}B]\f} = \bigjap{\f,-i[(P-z),B\jap{D_x}^{2k}B]\f} \\
&= -i \bigpare{\bigjap{\jap{D_x}^k B(P-\bar z)\f,\jap{D_x}^k B\f}
-\bigjap{\jap{D_x}^k B\f, \jap{D_x}^k B (P-z)\f}} \\
&= -i \bigpare{\bigjap{(\L^{-1})^*\jap{D_x}^k B(P-z)\f,\L\jap{D_x}^k B\f}\\
&\qquad\quad  -\bigjap{\L\jap{D_x}^k B\f, (\L^{-1})^*\jap{D_x}^k B (P-z)\f}} 
-2(\Im z)\bignorm{\jap{D_x}^k B\f}^2\\
&\leq 2\bignorm{(\L^{-1})^* \jap{D_x}^k B (P-z)\f}\cdot\bignorm{\L \jap{D_x}^k B\f}
-2(\Im z)\bignorm{\jap{D_x}^k B\f}^2. 
\end{align*}
Combining this with Lemma \ref{radcom}, we have 
\begin{align*}
&\d_4\bignorm{\L\jap{D_x}^k B\f}^2 +2(\Im z)\bignorm{\jap{D_x}^k B\f}^2\\
&\qquad - \jap{\f,(S^*\jap{D_x}^{2k+m-1}S+T^*T+U+V)\f} \\
&\quad \leq 2\bignorm{(\L^{-1})^* \jap{D_x}^k B (P-z)\f}\cdot\bignorm{\L \jap{D_x}^k B\f}\\
&\quad \leq \frac{\d_4}{2}\bignorm{\L \jap{D_x}^k B\f}^2
+\frac{4}{\d_4} \bignorm{(\L^{-1})^* \jap{D_x}^k B (P-z)\f}^2. 
\end{align*}
Thus we have 
\begin{align*}
&\frac{\d_4}{2}\bignorm{\L \jap{D_x}^k B\f}^2 + 2(\Im z)\bignorm{\jap{D_x}^k B\f}^2\\
& \leq \frac{4}{\d_4} \bignorm{(\L^{-1})^* \jap{D_x}^k B (P-z)\f}^2\\
&\qquad +\jap{\f,(S^*\jap{D_x}^{2k+m-1}S+T^*T+U+V)\f}.
\end{align*}
Now we note, by Lemma \ref{radell}, 
\[
\jap{\f,U\f} \leq C\norm{\tilde B\f}_{H^{k-1+m/2,-1}}^2 +C\norm{\f}_{H^{-N,-N}}^2
\]
with any $N$. These imply \eqref{eq-1} for $\f\in\mathcal{S}(\re^n)$. 
\end{proof}

We now extend Lemma \ref{radres} to more general $\f$ to prove Lemma \ref{radlem}. 
We choose $M'$ and $\n'$ so that $\tilde M<M'<M$, $\tilde \n<\n'<\n$, 
$\d<\d'<\tilde\d$, and set 
\[
b'(x,\x)=b^{\d'}_{M',\n'}(x,\x), \quad B'=\Op(b'). 
\]
We write 
\[
A_\e= \jap{\e D_x}^{-1} B, \quad \tilde A_\e= \jap{\e D_x}^{-1} \tilde B, \quad
A'_\e= \jap{\e D_x}^{-1} B'
\]
and we denote their symbols by $a_\e$, $\tilde a_\e$ and $a'_\e$, respectively. 

By the same computation as in the proof of Lemma \ref{radcl}, we have 
\[
\{p,\jap{\x}^{2k}|a_\e|^2\} \geq \d_4 \frac{|v|}{|x|} \jap{\x}^{2k}|a_\e|^2 
-\jap{\x}^{2k+m-1}f_{1}^2 -\jap{x}^{2\c-1-\m}f_{2}^2, 
\]
modulo $\mathcal{S}(\re^n)$-terms, 
where constants are independent of $\e$, and $f_1$ and $f_2$ are independent of $\e$. 
Then, as well as Lemma \ref{radcom}, we have 
\begin{align*}
&-i[P, A_\e^* \jap{D_x}^{2k} A_\e] \\
&\qquad \geq \d_4 A_\e^* \jap{D_x}^k \L^2\jap{D_x}^k A_\e 
-S^*\jap{D_x}^{2k+m-1}S-T^*T-U_\e -V_\e, 
\end{align*}
where the symbol of $U_\e$ has the property:
\begin{equation}\label{eq-4}
|u_\e(x,\x)|\leq C\jap{x}^{-2}\jap{\x}^{2k+m-2}|a'_\e(x,\x)|^2, 
\end{equation}
and symbols of $U_\e$ and $V_\e$ are bounded in the respective symbol classes. 
It follows that 
\[
|\jap{\f,U_\e\f}|\leq C \norm{A'_\e\f}^2_{H^{k-1+m/2,-1}} +C\norm{\f}^2_{H^{-N,-N}}, \quad \f\in\mathcal{S}(\re^n), 
\]
where the constant is independent of $\e$. 
Thus we have, as well as Lemma \ref{radres}, for $\f\in\mathcal{S}(\re^n)$, 
\begin{align}
&\norm{A_\e\f}_{H^{k+(m-1)/2,-1/2}}^2 +(\Im z)\norm{A_\e\f}_{H^k}^2 \nonumber \\
&\quad \leq C\bigpare{\norm{A_\e(P-z)\f}_{H^{k-(m-1)/2,1/2}}^2  
+\norm{A'_\e\f}_{H^{k-1+m/2,-1}}^2 \nonumber \\
&\quad \quad \quad +\norm{S\f}_{H^{k+(m-1)/2}}^2 +\norm{T\f}_{L^2}^2}
+C_N \norm{\f}_{H^{-N,-N}}^2, \label{eq-5}
\end{align}
with any $N$, where $C$ and $C_N$ are independent of $\e\in(0,1]$. 

\begin{lem}
Suppose that $\f\in\mathcal{S}'(\re^n)$ satisfies $\tilde B\f\in H^{k-1+m/2,-1/2}$, 
\begin{align*}
A_\e(P-z)\f\in H^{k-(m-1)/2,1/2},\, S\f\in H^{k+(m-1)/2}\,\, \text{and}\,\, T\f\in L^2. 
\end{align*}
Then $A_\e\f\in H^{k+(m-1)/2,-1/2}\cap H^m$ and \eqref{eq-5} holds. 
\end{lem}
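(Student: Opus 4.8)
The statement is a regularization/density argument: Lemma \ref{radres} gives the a priori inequality \eqref{eq-5} for Schwartz $\f$, and now we must upgrade it to the stated class of tempered distributions $\f$ by a mollifier argument in the spatial variable. The natural parameter to use is a second regularization $\f_\s = \jap{\s x}^{-N_0}\f$ (or a smooth cutoff $\i(|x|/R)\f$) composed with the already-present Friedrichs mollifier $\jap{\e D_x}^{-1}$; since the operators $A_\e, A'_\e, \tilde A_\e, S, T$ all have symbols in $S(1,g)$ (or nicely decaying ones), they map $\mathcal{S}'$ to $\mathcal{S}'$ and are bounded on every weighted Sobolev space, so $A_\e \f_\s \in \mathcal{S}$ and \eqref{eq-5} holds verbatim for each $\f_\s$. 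The plan is then to let $\s \to 0$ and use the assumed membership hypotheses on $\tilde B\f$, $A_\e(P-z)\f$, $S\f$, $T\f$ to pass to the limit on the right-hand side, obtaining a uniform bound on $\norm{A_\e \f_\s}_{H^{k+(m-1)/2,-1/2}}$, hence on $\norm{A_\e \f_\s}_{H^m}$ (note $A_\e$ gains an extra $\jap{D_x}$ worth of regularity because of the $\jap{\e D_x}^{-1}$ factor, so the $H^m$ membership is not circular — it is quantitatively weaker than $H^{k+(m-1)/2,-1/2}$ for the relevant $k$, which is exactly why the lemma is phrased this way).

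The key steps, in order, are as follows. First I would fix $\e$ and introduce $\f_\s$, checking that $A_\e\f_\s \in H^\infty$ with seminorms possibly blowing up in $\s$ but finite for each $\s>0$, so that applying \eqref{eq-5} to $\f_\s$ is legitimate. Second, I would analyze the commutator $[\jap{\s x}^{-N_0}, \text{(each relevant operator)}]$ and show these commutators are lower order and converge to zero strongly on the relevant weighted Sobolev spaces as $\s\to 0$; here the decay of the symbol of $B$ in $x$ (estimate \eqref{eq-3}) and of $P-z$ is what makes the error terms controllable — in particular $A_\e(P-z)\f_\s \to A_\e(P-z)\f$ in $H^{k-(m-1)/2,1/2}$, and similarly for the other three terms, using the hypotheses. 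Third, with the right-hand side of \eqref{eq-5} uniformly bounded in $\s$, I would conclude $\sup_{\s>0}\norm{A_\e\f_\s}_{H^{k+(m-1)/2,-1/2}} < \infty$, and since $A_\e\f_\s \to A_\e\f$ in $\mathcal{S}'$, a weak-compactness/Banach-Alaoglu argument in the Hilbert space $H^{k+(m-1)/2,-1/2}$ identifies the weak limit as $A_\e\f$, giving $A_\e\f \in H^{k+(m-1)/2,-1/2}$. The same reasoning applied to the $\Im z$ term and to the $H^m$ bound (which follows from the $\jap{\e D_x}^{-1}$ smoothing built into $A_\e$, once we know $B\f_\s$ lies in a fixed weighted $L^2$-type space) gives $A_\e\f \in H^m$ as well; finally, lower semicontinuity of the norms under weak limits yields \eqref{eq-5} for $\f$ itself.

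The main obstacle is the second step: the commutator estimates must be done with enough care that the error terms produced by sliding $\jap{\s x}^{-N_0}$ (or the cutoff) past $B$, $\tilde B$, $S$, $T$ and especially past $P-z$ — which has order $m$ and only $\jap{x}^{-\m}$-decaying coefficients in its perturbative part $Q$ — are genuinely lower order and go to zero, rather than merely being bounded. The delicate point is that $P_0 = p_0(D_x)$ has no spatial decay, so $[\jap{\s x}^{-N_0}, P_0]$ is only one order lower in $\x$ but gains a factor $\jap{\s x}^{-N_0-1}\s \lesssim \s$ pointwise times something growing; one must check that after composing with the $\jap{\e D_x}^{-1}$ and the weights in \eqref{eq-5} this error is $o(1)$ as $\s\to 0$ uniformly on bounded sets — this is where most of the routine-but-careful symbolic calculus lives, and it is exactly the kind of computation the paper's "standard pseudodifferential operator calculus" remark is meant to cover, so I would state the needed mapping properties and convergences as a short auxiliary lemma and defer the bookkeeping.
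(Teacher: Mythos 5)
Your plan has a genuine gap at the very first step. You regularize only in the spatial variable — $\f_\s = \jap{\s x}^{-N_0}\f$ or a compactly supported spatial cutoff — and then assert that, thanks to the $\jap{\e D_x}^{-1}$ factor built into $A_\e$, the result $A_\e\f_\s$ lies in $\mathcal{S}$ (or at least $H^\infty$), so that \eqref{eq-5} can be applied. That assertion is false: multiplication by $\jap{\s x}^{-N_0}$ (or by a bump function of $x$) improves spatial decay but does nothing to Sobolev order, and $\jap{\e D_x}^{-1}$ gains only \emph{one} derivative. A general $\f \in \mathcal{S}'$ has $\f \in H^{-N,-N}$ for some finite $N$ and no better, so $A_\e\f_\s$ lands only in $H^{-N+1,\,*}$ for each $\s>0$ — far from Schwartz. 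Since Lemma \ref{radres} (and hence \eqref{eq-5}) was established only for $\f\in\mathcal{S}(\re^n)$, you are not entitled to apply it to $\f_\s$, and the whole chain that follows (commutator estimates, weak limits, lower semicontinuity) has nothing to hang on.

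The fix — and this is what the paper does — is a \emph{double} cutoff, $X_L = \i_L(x)\i_L(D_x)$, which truncates simultaneously in $x$ and in $\x$. Because $\i_L(D_x)$ has a compactly supported symbol, it is a smoothing operator: $\i_L(D_x)\f$ is $C^\infty$ for any $\f\in\mathcal{S}'$, and multiplying by $\i_L(x)$ then gives a compactly supported smooth, hence Schwartz, function $X_L\f$. One can then apply \eqref{eq-5} to $X_L\f$ and let $L\to\infty$, using the symbol estimate $|[X_L, A_\e]| \lesssim \jap{x}^{-1}\jap{\x}^{-1}a'_\e$ (uniformly in $L$, converging to $0$ locally) together with the equivalence $\g\in H^{s,\ell} \Leftrightarrow \lim_L\norm{X_L\g}_{H^{s,\ell}}<\infty$. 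Your remaining steps (commutator analysis, weak-$*$ compactness, lower semicontinuity of norms) are in the right spirit and would go through once the mollifier also regularizes in frequency; but as written, the absence of a $\x$-side truncation means the argument cannot get started.
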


\begin{proof}
We set, for $L\gg 0$, 
\[
X_L =\i_L(x)\i_L(D_x). 
\]
We first note $\norm{X_L\g-\g}_{H^{s,\ell}}\to 0$ as $L\to\infty$, provided $\g\in H^{s,\ell}$. 
We also note $\g\in H^{s,\ell}$ if and only if $\lim_{L\to\infty}\norm{X_L\g}_{H^{s,\ell}}<\infty$. 

We observe that the symbol of $[X_L,A_\e]$ is bounded by $C\jap{x}^{-1}\jap{\x}^{-1}a_\e'(x,\x)$, 
modulo $\mathcal{S}(\re^{2d})$-terms, uniformly in $L$, and also it converges to 0 locally 
uniformly as $L\to\infty$. These imply 
\begin{align*}
\lim_{L\to\infty} \norm{X_L A_\e\g}_{H^{s,\ell}}
&\leq \varliminf_{L\to\infty} ( \norm{A_\e X_L \g}_{H^{s,\ell}} + \norm{[X_L,A_\e]\g}_{H^{s,\ell}})\\
&\leq \varliminf_{L\to\infty} \norm{A_\e X_L \g}_{H^{s,\ell}} 
\end{align*}
with any $N$, provided $\tilde{B}\g\in H^{s-1,\ell-1}$. In particular, since we assume $\tilde{B}\f\in H^{k-1+m/2,-1/2}$, 
\begin{multline*}
\lim_{L\to\infty} (\norm{X_L A_\e\f}_{H^{k+(m-1)/2,-1/2}}^2 +\norm{X_L A_\e\f}_{H^k}^2)\\
\leq \varliminf_{L\to\infty}(\norm{A_\e X_L \f}_{H^{k+(m-1)/2,-1/2}}^2 
+\norm{A_\e X_L \f}_{H^k}^2).
\end{multline*}
By the same argument, using $\tilde{B}\f\in H^{k-1+m/2,-1/2}$, we learn 
\[
\varlimsup_{L\to\infty} \norm{A_\e (P-z) X_L\f}_{H^{k-(m-1)/2,1/2}}^2
\leq \norm{A_\e (P-z)\f}_{H^{k-(m-1)/2,1/2}}^2. 
\]
We have similar estimates for $\norm{S\f}_{H^{k+(m-1)/2}}$ and $\norm{T\f}_{L^2}$. 
Concerning the estimate for $\norm{A_\e'\f}_{H^{k-1+m/2,-1}}$, we use the fact that 
$\tilde B\f\in H^{k-1+m/2,-1/2}$ to obtain 
\[
\varlimsup_{L\to\infty} \norm{A_\e' X_L\f}_{H^{k-1+m/2,-1}}^2
\leq \norm{A'_\e\f}_{H^{k-1+m/2,-1}}^2. 
\]
Combining these with \eqref{eq-5} for $X_L\f$, we learn 
\begin{align*}
& \lim_{L\to\infty} (\norm{X_L A_\e\f}_{H^{k+(m-1)/2,-1/2}}^2 +\norm{X_L A_\e\f}_{H^k}^2)\\
&\quad \leq  \varlimsup_{L\to\infty} \bigpare{ C(\norm{A_\e (P-z) X_L\f}_{H^{k-(m-1)/2,1/2}}^2
+\norm{A_\e'X_L \f}_{H^{k-1+m/2,-1}}^2 \\
&\quad\quad\quad   + \norm{SX_L \f}_{H^{k+(m-1)/2}}^2 +\norm{TX_L \f}_{L^2}^2)
+C_N \norm{X_L \f}_{H^{-N,-N}}^2} \\
&\quad \leq C\bigpare{\norm{A_\e(P-z)\f}_{H^{k-(m-1)/2,1/2}}^2  
+\norm{A'_\e\f}_{H^{k-1+m/2,-1}}^2  \\
&\quad \quad\quad+\norm{S\f}_{H^{k+(m-1)/2}}^2 +\norm{T\f}_{L^2}^2}
+C'_N \norm{\f}_{H^{-N,-N}}^2,
\end{align*}
and this implies the assertion. 
\end{proof}

\begin{proof}[Proof of Lemma \ref{radlem}]
It remains to take the limit $\e\to 0$ in \eqref{eq-5}. 
We note
\begin{align*}
\norm{A_\e \f}_{H^{s,\ell}} &= 
\norm{\jap{D_x}^s\jap{x}^\ell \jap{\e D_x}^{-1}B\f}_{L^2} \\
&= \norm{\jap{\e D_x}^{-1}\jap{D_x}^s\jap{x}^\ell B\f+ 
\jap{D_x}^s [\jap{x}^\ell, \jap{\e D_x}^{-1}]B\f}_{L^2},
\end{align*}
and hence 
\[
\norm{\jap{\e D_x}^{-1}\jap{D_x}^s\jap{x}^\ell B\f}_{L^2}
\leq \norm{A_\e\f}_{H^{s,\ell}} + C\norm{B\f}_{H^{s-1,\ell-1}}.
\]
Thus we have 
\[
\norm{B\f}_{H^{s,\ell}}\leq \varliminf_{\e\to 0} \norm{A_\e\f}_{H^{s,\ell}}
+C\norm{B\f}_{H^{s-1,\ell-1}}.
\]
We note this holds without assuming $B\f\in H^{s,\ell}$, and if the right hand side 
is finite, we obtain $B\f\in H^{s,\ell}$. 

By the same argument, we also have 
\begin{align*}
&\varlimsup_{\e\to 0} \norm{A_\e(P-z)\f}_{H^{k-(m-1)/2,1/2} }\\
&\quad \leq \norm{B(P-z)\f}_{H^{k-(m-1)/2,1/2}} + C\norm{B(P-z)\f}_{H^{k-(m-3)/2,-1/2} }\\
&\quad \leq (1+C) \norm{B(P-z)\f}_{H^{k-(m-1)/2,1/2}}
\end{align*}
and similarly, 
\[
\varlimsup_{\e\to 0} \norm{A_\e'\f}_{H^{k-1+m/2,-1} }
\leq C' \norm{B'\f}_{H^{k-1+m/2,-1} }. 
\]
Substituting these to \eqref{eq-5}, we have 
\begin{align*}
&\norm{B\f}^2_{H^{k+(m-1)/2,-1/2}}+(\Im z)\norm{B\f}^2_{H^{k}} \\
&\quad \leq \varliminf_{\e\to 0} \bigpare{\norm{A_\e\f}^2_{H^{k+(m-1)/2,-1/2}}
+C\norm{B\f}^2_{H^{k+(m-3)/2,-3/2}}\\
&\quad\quad\quad   + (\Im z)(\norm{A_\e\f}^2_{H^{k}}+C\norm{B\f}^2_{H^{k-1,-1}})}\\
&\quad \leq \varlimsup_{\e\to 0}C \bigpare{ \norm{A_\e(P-z)\f}^2_{H^{k-(m-1)/2,1/2}}
+\norm{\tilde A_\e \f}^2_{H^{k,-1}}+\norm{B\f}^2_{H^{k-1/2,-1}}} \\
&\quad\quad\quad  +C(\norm{S\f}^2_{H^{k+(m-1)/2}}+\norm{T\f}_{L^2}^2)
+C_N \norm{\f}_{H^{-N,-N}}^2\\
&\quad \leq C'\bigpare{\norm{B(P-z)\f}_{H^{k-(m-1)/2,1/2}}^2  
+\norm{\tilde B\f}_{H^{k,-1}}^2 \nonumber \\
&\quad \quad \quad +\norm{S\f}_{H^{k+(m-1)/2}}^2 +\norm{T\f}_{L^2}^2}
+C_N \norm{\f}_{H^{-N,-N}}^2,
\end{align*}
and this completes the proof of Lemma \ref{radlem}. 
\end{proof}

\appendix
\section{Estimates for the classical trajectories}\label{appa}
In this section, we prove estimates on the classical trajectories which are used in the proof of Proposition \ref{singprop}.
First, we show a classical Mourre estimate which implies the peudo-convexity of $\re^n$ with respect to $P$. We note
\[
(y(t,x,\l\x),\y(t,x,\l\x))=(y(\l^{m-1}t,x,\x),\l\y(\l^{m-1}t,x,\x))\quad \text{for }\l>0, 
\]
since $p_m$ is homogeneous of degree $m$. 
\begin{lem}\label{mourre}
There exist $M>0$ and $R_0>1$ such that 
\[
H_{p_m}^2(|x|^2)\geq M|\x|^{2(m-1)}
\]
for any $(x,\x)\in \{(y,\y)\in T^*\re^{n}\,|\, |y|>R_0,\,|\y|\neq 0\}$. 
\end{lem}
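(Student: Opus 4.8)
The plan is to compute $H_{p_m}^2(|x|^2)$ explicitly and isolate its leading part. Differentiating $|y(t)|^2$ twice along the Hamilton flow (equivalently, a direct application of the chain rule) gives
\[
H_{p_m}^2(|x|^2) = 2\bigabs{\pa_\x p_m(x,\x)}^2 + 2\, x\cdot H_{p_m}\bigpare{\pa_\x p_m}(x,\x),
\]
where $H_{p_m}\bigpare{\pa_\x p_m}$ denotes the vector whose $j$-th component is $H_{p_m}(\pa_{\x_j}p_m)$. Since $p_m$ is a polynomial in $\x$ that is homogeneous of degree $m$, both terms on the right, and hence $H_{p_m}^2(|x|^2)$ itself, are homogeneous of degree $2(m-1)$ in $\x$, matching the homogeneity of $M|\x|^{2(m-1)}$. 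It therefore suffices to prove the inequality on $\bigset{(x,\x)}{|x|>R_0,\ |\x|=1}$ and then rescale in $\x$.

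For the first term, the non-degeneracy hypothesis in Assumption \ref{assa} gives directly $2\bigabs{\pa_\x p_m(x,\x)}^2\geq 2C^{-2}|\x|^{2(m-1)}$ for all $(x,\x)$, with no restriction on $|x|$. For the correction term I would use that $p_m$ has asymptotically constant coefficients: since $p_0$ does not depend on $x$, one has $\pa_x p_m=\pa_x(p_m-p_0)$ and $\pa_x\pa_{\x_j}p_m=\pa_x\pa_{\x_j}(p_m-p_0)$, and the bounds $|\pa_x^\b(a_\a-b_\a)|\leq C_\b\jap{x}^{-\m-|\b|}$ force these quantities to be $O(\jap{x}^{-\m-1})$ when $|\x|=1$. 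Combined with the obvious bounds $|\pa_\x p_m|\leq C$ and $|\pa_\x\pa_{\x_j}p_m|\leq C$ when $|\x|=1$, this yields $\bigabs{H_{p_m}(\pa_\x p_m)(x,\x)}\leq C'\jap{x}^{-\m-1}$, hence $\bigabs{2\, x\cdot H_{p_m}(\pa_\x p_m)(x,\x)}\leq C''\jap{x}^{-\m}$ when $|\x|=1$.

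Combining these, when $|\x|=1$ we obtain $H_{p_m}^2(|x|^2)\geq 2C^{-2}-C''\jap{x}^{-\m}$. Choosing $R_0>1$ so large that $C''\jap{x}^{-\m}\leq C^{-2}$ whenever $|x|>R_0$ makes this $\geq C^{-2}$, and homogeneity in $\x$ then extends it to $H_{p_m}^2(|x|^2)\geq C^{-2}|\x|^{2(m-1)}$ on $\bigset{(x,\x)}{|x|>R_0,\ \x\neq 0}$; thus $M=C^{-2}$ works.

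I do not anticipate a genuine obstacle, since the lemma is essentially a perturbative computation. The two points that need care are: keeping track of $\x$-homogeneity --- which is why it is cleanest to reduce to $|\x|=1$ before estimating, to sidestep the discrepancy between $|\x|^{m}\jap{\x}^{m-2}$ and $|\x|^{2(m-1)}$ near $\x=0$ when $m\geq3$ --- and checking that the correction term is genuinely of lower order, which is exactly where the asymptotically-constant-coefficient hypothesis enters, through $\pa_x p_m=\pa_x(p_m-p_0)$ rather than $\pa_x p_m$ itself.
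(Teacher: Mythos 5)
Your proof is correct and is essentially the paper's proof: both expand $H_{p_m}^2(|x|^2)=2|\pa_\x p_m|^2 + 2\,x\cdot H_{p_m}(\pa_\x p_m)$, use the lower bound on $|\pa_\x p_m|$ from Assumption~\ref{assa} for the main term, and estimate the correction via $\pa_x p_m=\pa_x(p_m-p_0)$, the decay $|\pa_x^\b(a_\a-b_\a)|\lesssim\jap{x}^{-\m-|\b|}$, and homogeneity in $\x$ to get an $O(\jap{x}^{-\m}|\x|^{2(m-1)})$ bound, then absorb it by taking $R_0$ large. The only difference is that you make the reduction to $|\x|=1$ and the subsequent rescaling explicit, whereas the paper keeps the $|\x|^{2(m-1)}$ factor throughout; this is a presentational choice, not a different argument.
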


\begin{proof}
We have 
\begin{align*}
&H_{p_m}^2(|x|^2)=2H_{p_m}(x\cdot \pa_{\x}p_m)\\
&\quad =2|\pa_{\x}p_m|^2+2\sum_{j,k=1}^{n}x_j(\pa_{x_k}\pa_{\x_j}p_m)\pa_{\x_k}p_m-
2\sum_{j,k=1}^nx_j(\pa_{\x_j}\pa_{\x_k}p_m)\pa_{x_k}p_m.
\end{align*}
On the other hand, by Assumption~\ref{assa}, there exists $C>0$ such that
\[
\biggabs{2\sum_{j,k=1}^{n}x_j(\pa_{x_k}\pa_{\x_j}p_m)\pa_{\x_k}p_m-
2\sum_{j,k=1}^nx_j(\pa_{\x_j}\pa_{\x_k}p_m)\pa_{x_k}p_m}
\leq C\jap{R_0}^{-\m}|\x|^{2(m-1)}.
\]
Combining this with the non-degeneracy condition of $\pa_\x p_0(\x)$ in Assumption~\ref{assa}, 
we conclude the assertion. 
\end{proof}

Next, we observe that an energy bound on classical trajectories holds, even if $p$ is not elliptic. 
We note an analogous result is proved in \cite{KPRV}, though our proof is simpler. 

\begin{lem}\label{energy}
Fix $(x_0,\x_0)\in T^{*}\re^n$ with $\x_0\neq 0$ and suppose that $(x_0,\x_0)$ is forward non-trapping in the sense that $|y(t,x_0,\x_0)|\to \infty$ as $t\to \infty$. Then, there exist $C_1, C_2>0$ such that
\[
C_1\leq |\y(t, x_0, \x_0)|^{m-1}\leq C_2,
\]
for $t\geq 0$.
\end{lem}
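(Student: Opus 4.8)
\textbf{Proof proposal for Lemma~\ref{energy}.}
The plan is to control $|\y(t)|$ along the trajectory by using two conserved or almost-conserved quantities: the principal symbol $p_m$ itself (which is exactly conserved along the $p_m$-flow), and the ``escape'' provided by the Mourre-type estimate of Lemma~\ref{mourre}. First I would record that $p_m(y(t),\y(t))=p_m(x_0,\x_0)$ for all $t$, so the trajectory stays on a fixed energy surface. The difficulty is that $p_m$ need not be elliptic, so a bound on $p_m$ alone does not bound $|\y(t)|$; we must rule out both $|\y(t)|\to 0$ and $|\y(t)|\to\infty$ using the dynamics.

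The key step is to combine the forward non-trapping hypothesis $|y(t)|\to\infty$ with Lemma~\ref{mourre}. Since $|y(t)|\to\infty$, for $t$ large enough we have $|y(t)|>R_0$, so along the tail of the trajectory $\frac{d^2}{dt^2}|y(t)|^2 = H_{p_m}^2(|x|^2)\big|_{(y(t),\y(t))}\geq M|\y(t)|^{2(m-1)}>0$; in particular $|y(t)|^2$ is eventually convex and $\frac{d}{dt}|y(t)|^2$ is eventually increasing, hence eventually positive (otherwise $|y(t)|$ could not tend to infinity). The first bound $|\y(t)|\leq C_2$ I would get by a Gr\"onwall-type argument on the quantity $|y(t)|+|\y(t)|$ or, more simply, by noting that on the energy surface $p_m = $ const together with the non-degeneracy $C^{-1}|\x|^{m-1}\leq|\pa_\x p_m|\leq C|\x|^{m-1}$, the flow has at most polynomial growth; since $\dot{\y}=-\pa_x p_m$ and $|\pa_x p_m(y,\y)|\leq C\jap{y}^{-1-\m}|\y|^{m}$ (using Assumption~\ref{assa}, as the $x$-derivative of $a_\a - b_\a$ decays), integrability of $\jap{y(t)}^{-1-\m}$ along a trajectory escaping to infinity (which follows once we know $|y(t)|$ grows at least linearly, itself a consequence of the eventual positivity of $\frac{d}{dt}|y(t)|^2$ and Lemma~\ref{mourre}) yields that $\log|\y(t)|$ has bounded variation, hence $|\y(t)|$ converges to a finite positive limit. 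That simultaneously gives the lower bound $|\y(t)|\geq C_1$ away from $0$.

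The main obstacle I anticipate is the bootstrap: to show $\jap{y(t)}^{-1-\m}$ is integrable in $t$ we need a lower linear bound on $|y(t)|$, but to get that lower bound cleanly we want to already know $|\y(t)|$ is bounded below (so that $H_{p_m}^2(|x|^2)\geq M|\y|^{2(m-1)}$ is bounded below by a positive constant, forcing $|y(t)|^2$ to grow at least quadratically). I would resolve this by running the two estimates together on a maximal interval: let $C_1 = \frac12\liminf|\y(t)|^{m-1}$ and argue that on the set where $|\y(t)|^{m-1}\geq C_1$ the quantity $|y(t)|$ grows super-linearly, making $\int \jap{y(s)}^{-1-\m}\,ds$ finite and therefore $|\log|\y(t)| - \log|\y(0)||$ uniformly small for large initial time — small enough that $|\y(t)|^{m-1}$ cannot actually drop to $C_1$, closing the loop by continuity. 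Then extend from the tail back to $t=0$ using continuity of the flow on the compact interval $[0,t_0]$, where both bounds hold trivially since $\y_0\neq 0$ and the flow is $C^1$. A cleaner packaging, which I would adopt if it goes through, is to differentiate $\frac{d}{dt}\log|\y(t)|^2 = -2\,\y(t)\cdot\pa_x p_m(y(t),\y(t))/|\y(t)|^2$ and bound the right side by $C\jap{y(t)}^{-1-\m}|\y(t)|^{m-1}$, then absorb the $|\y(t)|^{m-1}$ using homogeneity to reduce to the unit cosphere where it is bounded, leaving a Gr\"onwall inequality with integrable coefficient once linear escape of $|y(t)|$ is in hand.
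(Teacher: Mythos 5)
Your overall plan coincides with the paper's: combine Lemma~\ref{mourre} with forward non-trapping to get eventual (in fact quadratic) escape of $|y(t)|$, use $|\pa_x p_m(y,\y)|\lesssim\jap{y}^{-1-\m}|\y|^m$ to control the drift of $|\y(t)|$, and close the circularity you correctly diagnose by a continuation argument. However, the two concrete ways you propose to close it do not work as written.

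The definition $C_1=\tfrac12\liminf_{t\to\infty}|\y(t)|^{m-1}$ is circular: if $\liminf=0$, then $C_1=0$, the set $\{|\y(t)|^{m-1}\geq C_1\}$ is all of $[0,\infty)$, and the Mourre estimate $H_{p_m}^2(|x|^2)\geq M|\y|^{2(m-1)}\geq MC_1^{2}=0$ yields nothing, so the "loop" never closes. Establishing $\liminf>0$ is exactly what the lemma asserts, so you cannot feed it into the definition of your threshold. The paper instead picks $t_0$ so large that $|y(t)|\geq R_1$ and $\tfrac{d}{dt}|y(t)|^2\geq 0$ for $t\geq t_0$ (possible by non-trapping and Lemma~\ref{mourre}), fixes the known positive reference value $\y_0=|\y(t_0)|$, and runs the bootstrap on the maximal interval $[t_0,T]$ on which $|\y(t)|\geq\y_0/2$; the $R_1$-dependent smallness of $\int_{t_0}^{T}\jap{y(s)}^{-1-\m}\,ds$ then shows the inequality is strict at $t=T$ and hence $T=\infty$.

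Your "cleaner packaging" via $\tfrac{d}{dt}\log|\y(t)|^2$ is also not quite right: the bound you get is $\bigabs{\tfrac{d}{dt}\log|\y|^2}\leq C\jap{y}^{-1-\m}|\y|^{m-1}$, and the extra factor $|\y|^{m-1}$ cannot be "absorbed by homogeneity onto the unit cosphere" — rescaling $\x_0$ just relabels the trajectory and does not bound $|\y(t)|$ from above, which is what you would need to make the coefficient integrable. The paper's quiet but decisive trick is to differentiate $|\y(t)|^{-(m-1)}$ instead of $\log|\y(t)|$: then the chain-rule factor $|\y|^{-m}$ exactly cancels the $|\y|^m$ coming from the bound on $\pa_x p_m$, giving $\bigabs{\tfrac{d}{dt}|\y(t)|^{-(m-1)}}\leq (m-1)C_0\jap{y(t)}^{-1-\m}$ with \emph{no} $\y$-dependence on the right. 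Consequently a one-sided bootstrap (a lower bound $|\y|\geq\y_0/2$, used only to feed the Mourre estimate and get quadratic growth of $|y|$) suffices, and both the upper and lower bounds on $|\y(t)|$ fall out at once from the bounded variation of $|\y|^{-(m-1)}$. With these two corrections your argument becomes the paper's proof.
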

\begin{proof}
Let $R_0$ be as in Lemma~\ref{mourre}, and we let $R_1\geq R_0$ which is determined later.
We first note that by the forward non-trapping condition and Lemma \ref{mourre}, there exits $t_0\geq 0$ such that for $t\geq t_0$, we have
\begin{equation}\label{5.1}
|y(t,x_0,\x_0)|\geq R_1,\quad \frac{d}{dt}|y(t,x_0,\x_0)|^2\geq 0.
\end{equation}
By Lemma~\ref{mourre} and the non-trapping condition, it is easy to observe that there is $t_0>0$ such that 
$\frac{d^2}{dt^2}|y(t,x_0,\x_0)|^2>0$ for $t\geq s_0$, and $\frac{d}{dt}|y(t_0,x_0,\x_0)|^2> 0$. 
Then for all $t\geq t_0$, the condition \eqref{5.1} is satisfied. 

Let $C_0>0$ be a constant such that
\[
|\pa_{x}p_m(x,\x)|\leq C_0 |x|^{-1-\m} |\x|^{m},
\]
and we write $ \y_0=|\y(t_0,x_0,\x_0)|>0$. 
We set 
\[
T=\sup\bigset{s\geq t_0}{\y_0/2\leq |\y(t,x_0,\x_0)|\text{ for }t\in [t_0,s]}\in (t_0,\infty].
\]
By Lemma~\ref{mourre}, we have 
\[
|y(t,x_0,\x_0)|^2\geq R_1^2+\frac{M\y_0^{2(m-1)}}{2^{2m-1}}(t-t_0)^2, \quad t_0\leq t\leq T. 
\]
Now we note
\[
\left|\frac{d}{dt}|\y(t,x_0,\x_0)|\right|\leq C_0 |y(t,x_0,\x_0)|^{-1-\m}|\y(t,x_0,\x_0)|^m
\]
and hence
\[
\left|\frac{d}{dt}|\y(t,x_0,\x_0)|^{-(m-1)}\right|
\leq (m-1)C_0 \biggpare{R_1^2+\frac{M\y_0^{2(m-1)}}{2^{2m-1}}(t-t_0)^2}^{-(1+\m)/2}
\]
for $t_0\leq t\leq T$. Thus we have 
\begin{align*}
&\left| \y_0^{-(m-1)}-|\y(T,x_0,\x_0)|^{-(m-1)}\right| \\
&\quad \leq \int_{t_0}^T (m-1)C_0 \biggpare{R_1^2+\frac{M\y_0^{2(m-1)}}{2^{2m-1}}(t-t_0)^2}^{-(1+\m)/2}dt \\
&\quad \leq \frac{C_02^{(2m-1)/2}R_1^{-\m}}{(1+\m)\sqrt{M}}\y_0^{-(m-1)}.
\end{align*}
We now choose $R_1>0$ so large that 
\[
\frac{C_02^{(2m-1)/2}R_1^{-\m}}{(1+\m)\sqrt{M}}<1/2, 
\quad\text{i.e.,}\quad
R_1> \biggpare{\frac{C_0 2^{(2m+1)/2}}{(1+\m)\sqrt{M}}}^{1/\m}, 
\]
then 
\[
|\y(T,x_0,\x_0)|^{-(m-1)}<(3/2)\y_0^{-(m-1)}, 
\]
i.e., $|\y(T,x_0,\x_0)|>(2/3)^{1/(m-1)}\y_0>(1/2)\y_0$. 
If $T<\infty$, this is a contradiction, and hence $T=\infty$. Thus we also learn
\[
2^{-1}\y_0\leq |\y(t,x_0,\x_0)|\leq 2^{1/(m-1)}\y_0, \quad t\geq t_0.
\]
\end{proof}

\begin{cor}
Suppose the same assumptions as in Lemma \ref{energy} hold. Moreover, suppose $|\x_0|=1$. Then, we have 
\begin{align*}
C_1\l\leq |\y(t,x_0,\l\x_0)|\leq C_2\l
\end{align*}
for any $\l>0$ and $t\geq 0$.
\end{cor}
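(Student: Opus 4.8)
The plan is to deduce the corollary directly from Lemma~\ref{energy} together with the homogeneity scaling identity for the Hamilton flow of the principal symbol $p_m$.

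First I would recall the relation stated at the beginning of this appendix: since $p_m$ is homogeneous of degree $m$ in $\x$, for every $\l>0$ one has
\[
\bigpare{y(t,x_0,\l\x_0),\y(t,x_0,\l\x_0)}=\bigpare{y(\l^{m-1}t,x_0,\x_0),\,\l\,\y(\l^{m-1}t,x_0,\x_0)},
\]
and in particular $|\y(t,x_0,\l\x_0)|=\l\,|\y(\l^{m-1}t,x_0,\x_0)|$ for all $t\in\re$ and $\l>0$.

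Next, since $(x_0,\x_0)$ is forward non-trapping with $\x_0\neq0$, Lemma~\ref{energy} furnishes constants $C_1,C_2>0$, depending on $(x_0,\x_0)$ but on nothing else, with $C_1\leq|\y(s,x_0,\x_0)|^{m-1}\leq C_2$ for all $s\geq0$. Because $m\geq2$, raising to the power $1/(m-1)$ and relabelling the constants gives $C_1\leq|\y(s,x_0,\x_0)|\leq C_2$ for $s\geq0$.

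Finally, for $t\geq0$ and $\l>0$ the time $s\coloneqq\l^{m-1}t$ is nonnegative, so substituting the bound above into the scaling identity and multiplying by $\l$ yields $C_1\l\leq|\y(t,x_0,\l\x_0)|\leq C_2\l$, which is the assertion. The hypothesis $|\x_0|=1$ serves only to make $\l$ literally the length of the rescaled covector and is otherwise unused. There is no genuine obstacle here; the one point meriting a word of care is that the constants in Lemma~\ref{energy} are uniform in $t$, and it is precisely this uniformity that the homogeneity rescaling converts into uniformity in $\l$.
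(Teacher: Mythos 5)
Your proof is correct and is the intended argument: the paper states the corollary without proof, treating it as an immediate consequence of the homogeneity scaling identity $(y(t,x,\l\x),\y(t,x,\l\x))=(y(\l^{m-1}t,x,\x),\l\y(\l^{m-1}t,x,\x))$ recorded at the start of Appendix~\ref{appa} together with the $t$-uniform bound from Lemma~\ref{energy}. Your observations that $\l^{m-1}t\geq 0$ whenever $t\geq 0$, that the constants must be relabelled after taking the $(m-1)$-th root, and that the normalization $|\x_0|=1$ is cosmetic, are all accurate.
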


\begin{cor}
Under the same assumptions as in Lemma \ref{energy} with $|\x_0|=1$, there exist $C,C',K,K'>0$ such that
\begin{align*}
C\l t -K\leq |y(t,x_0,\x_0)|\leq C'\l t+K'
\end{align*}
for $\l>0$ and $t\geq 0$.
\end{cor}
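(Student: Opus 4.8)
The plan is to reduce to the case $\l=1$ by the homogeneity relation recorded above, namely $y(t,x_0,\l\x_0)=y(\l^{m-1}t,x_0,\x_0)$, and then to bound $t\mapsto|y(t,x_0,\x_0)|$ both from above and from below by affine functions of $t$; the general case then follows by substituting $\l^{m-1}t$ for $t$.

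For the upper bound I would simply use a velocity estimate: $\frac{d}{dt}|y(t,x_0,\x_0)|$ has absolute value at most $|\dot y(t,x_0,\x_0)|=|\pa_\x p_m(y(t,x_0,\x_0),\y(t,x_0,\x_0))|$, which by Assumption \ref{assa} is at most $C|\y(t,x_0,\x_0)|^{m-1}$, and by Lemma \ref{energy} the latter is bounded by $CC_2$ uniformly in $t\geq0$. Hence $t\mapsto|y(t,x_0,\x_0)|$ is Lipschitz with a constant independent of $t$, and integrating from $0$ gives $|y(t,x_0,\x_0)|\leq|x_0|+CC_2\,t$.

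The lower bound is the only point requiring care, and it reuses the mechanism from the proof of Lemma \ref{energy}. By that proof there is $t_0\geq0$ such that $|y(t,x_0,\x_0)|\geq R_0$ and $\frac{d}{dt}|y(t,x_0,\x_0)|^2\geq0$ for all $t\geq t_0$, where $R_0$ is as in Lemma \ref{mourre}. On this time range Lemma \ref{mourre} applies along the trajectory, so
\[
\frac{d^2}{dt^2}|y(t,x_0,\x_0)|^2 = H_{p_m}^2(|x|^2)\ \text{evaluated at}\ (y(t,x_0,\x_0),\y(t,x_0,\x_0)) \geq M|\y(t,x_0,\x_0)|^{2(m-1)}\geq MC_1^2>0,
\]
the last inequality being the lower energy bound of Lemma \ref{energy}. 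Integrating twice from $t_0$ and discarding the nonnegative term $\frac{d}{dt}|y(t_0,x_0,\x_0)|^2$ gives $|y(t,x_0,\x_0)|^2\geq\frac{MC_1^2}{2}(t-t_0)^2$, hence $|y(t,x_0,\x_0)|\geq c(t-t_0)$ with $c=(MC_1^2/2)^{1/2}$ for $t\geq t_0$; for $0\leq t\leq t_0$ the bound $|y(t,x_0,\x_0)|\geq0\geq c(t-t_0)$ is trivial, so $|y(t,x_0,\x_0)|\geq ct-ct_0$ for all $t\geq0$. Combining this with the upper bound and rescaling $t\mapsto\l^{m-1}t$ (and adjusting constants) yields the stated estimate.

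I do not anticipate a genuine obstacle here, since the content is entirely a recombination of Lemmas \ref{mourre} and \ref{energy}. The one delicate point is to extract the time $t_0$ from the proof of Lemma \ref{energy} — the instant after which the trajectory has entered $\{|x|>R_0\}$ and $|y(t,x_0,\x_0)|^2$ has become nondecreasing — so that the convexity estimate $\frac{d^2}{dt^2}|y(t,x_0,\x_0)|^2\geq MC_1^2$ holds throughout $[t_0,\infty)$ and may be integrated without further monitoring of the trajectory.
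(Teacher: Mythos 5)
The paper states this corollary without proof, so there is nothing to compare your argument against; the question is simply whether the argument is correct, and it essentially is. Your two ingredients are exactly the right ones: the uniform energy bound $C_1\leq|\y(t)|^{m-1}\leq C_2$ from Lemma~\ref{energy} turns the velocity estimate $|\dot y|\leq C|\y|^{m-1}$ into a Lipschitz bound (giving the upper line), and the virial inequality of Lemma~\ref{mourre} together with the escape time $t_0$ extracted from the proof of Lemma~\ref{energy} gives $\frac{d^2}{dt^2}|y(t)|^2\geq MC_1^2$ for $t\geq t_0$, which after two integrations and discarding nonnegative boundary terms yields the lower line. This is the natural argument.

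One point you gloss over: the homogeneity identity reads $y(t,x_0,\l\x_0)=y(\l^{m-1}t,x_0,\x_0)$, so after substitution your bounds become
\[
c\,\l^{m-1}t-ct_0\ \leq\ |y(t,x_0,\l\x_0)|\ \leq\ |x_0|+CC_2\,\l^{m-1}t,
\]
i.e.\ the coefficient of $t$ scales like $\l^{m-1}$, not $\l$. For $m>2$ these are genuinely different statements (the $\l^{m-1}$ version does \emph{not} imply the $\l$ version uniformly over $\l\in(0,\infty)$ in either direction), so writing that the rescaling ``yields the stated estimate'' papers over a discrepancy. As literally printed the corollary also lacks a $\l$ in the argument $y(t,x_0,\x_0)$, which cannot be right either, since then the inequality would fail for $\l$ tending to $0$ or $\infty$. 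The most plausible reading is that the paper intends $|y(t,x_0,\l\x_0)|$ bounded by affine functions with slope proportional to $\l^{m-1}$ (matching the scaling of the momentum corollary just above, where $|\y(t,x_0,\l\x_0)|\asymp\l$), and that is exactly what your computation proves. You should state this explicitly rather than claiming to reproduce the printed inequality verbatim; otherwise the final sentence of your proof is asserting something your estimates do not deliver.
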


Combining with the estimate $|\pa_{x}p_m(x,\x)|\leq C\jap{x}^{-1-\m}|\x|^{m-1}$, we obtain:
\begin{cor}\label{asymmom}
Suppose that $(x_0,\x_0)\in \re^n\times \re^n\setminus\{0\}$ is non-trapping. Then, 
\begin{align*}
\y_{+}&=\lim_{t\to\infty}\y(t,x_0,\x_0)\neq 0,\\
v_{+}&=\lim_{t\to\infty}\pa_{\x}p_m(y(t,x_0,\x_0),\y(t,x_0,\x_0))=\lim_{t\to\infty}\pa_{\x}p_0(\y(t,x_0,\x_0))\neq 0
\end{align*}
exist.
\end{cor}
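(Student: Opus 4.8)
The plan is to read off both claims from the Hamilton equation $\dot\y(t)=-\pa_x p_m(y(t),\y(t))$ together with the quantitative bounds already established; here and below we abbreviate $(y(t),\y(t))=(y(t,x_0,\x_0),\y(t,x_0,\x_0))$, and we may assume $|\x_0|=1$ since the general case reduces to it by the homogeneity relation recalled at the start of this section. First I would record the two inputs: Lemma~\ref{energy} gives $C_1\leq|\y(t)|^{m-1}\leq C_2$ for $t\geq0$, and the corollary above gives $|y(t)|\geq Ct-K$ for $t\geq0$ with some $C>0$. Inserting these, together with the bound $|\pa_x p_m(x,\x)|\leq C\jap{x}^{-1-\m}|\x|^{m-1}$, into the Hamilton equation yields
\[
\bigabs{\dot\y(t)}\leq C\jap{y(t)}^{-1-\m}|\y(t)|^{m-1}\leq C'(1+t)^{-1-\m},
\]
which is integrable on $[0,\infty)$ because $\m>0$. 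Hence $\y_+=\lim_{t\to\infty}\y(t)$ exists, and passing to the limit in $|\y(t)|^{m-1}\geq C_1$ gives $|\y_+|^{m-1}\geq C_1>0$, so $\y_+\neq0$.

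Next I would compare $\pa_\x p_m$ with $\pa_\x p_0$ along the trajectory. Writing $\pa_\x p_m(x,\x)-\pa_\x p_0(\x)=\sum_{|\a|=m}(a_\a(x)-b_\a)\,\pa_\x(\x^\a)$ and using the decay $|a_\a(x)-b_\a|\leq C\jap{x}^{-\m}$ from Assumption~\ref{assa}, one obtains
\[
\bigabs{\pa_\x p_m(y(t),\y(t))-\pa_\x p_0(\y(t))}\leq C\jap{y(t)}^{-\m}|\y(t)|^{m-1},
\]
and the right-hand side tends to $0$ as $t\to\infty$ since $|y(t)|\to\infty$ while $|\y(t)|$ stays bounded. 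Because $\pa_\x p_0$ is continuous, $\pa_\x p_0(\y(t))\to\pa_\x p_0(\y_+)$; hence both limits in the statement exist and equal $v_+=\pa_\x p_0(\y_+)$. Finally, the lower bound $|\pa_\x p_0(\x)|\geq C^{-1}|\x|^{m-1}$ in Assumption~\ref{assa}, evaluated at $\x=\y_+\neq0$, gives $|v_+|\geq C^{-1}|\y_+|^{m-1}>0$, so $v_+\neq0$.

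There is no real obstacle in this last step: the statement is essentially a direct consequence of the classical non-trapping estimates combined with the long-range decay of the coefficients. The only ingredient that uses the full strength of the earlier analysis is the linear lower bound $|y(t)|\gtrsim t$, which furnishes the integrability of $\dot\y$ and which itself rests on the classical Mourre estimate $H_{p_m}^2(|x|^2)\geq M|\x|^{2(m-1)}$ of Lemma~\ref{mourre}; everything else is continuity together with the elementary bounds of Assumption~\ref{assa}.
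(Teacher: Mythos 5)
Your proof is correct and is exactly the argument the paper intends: the paper simply states the corollary follows from the preceding two corollaries combined with the decay estimate on $\pa_x p_m$, and you have supplied precisely those details (integrability of $\dot\y$ from $|y(t)|\gtrsim t$ and the bounds on $|\y(t)|$, then comparison of $\pa_\x p_m$ and $\pa_\x p_0$ along the trajectory using the long-range decay of $a_\a-b_\a$). The only cosmetic slip, inherited from a typo in the paper's own text, is writing $|\pa_x p_m(x,\x)|\lesssim\jap{x}^{-1-\m}|\x|^{m-1}$ where the correct homogeneity gives $|\x|^m$; since $|\y(t)|$ is bounded this does not affect the argument.
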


\section{Construction of the conjugate operator}\label{appb}

Let $(x_0,\x_0)\in p_m^{-1}(0)\setminus\{\x\neq 0\}$. By Assumption \ref{assnull}, $(x_0,\x_0)$ is forward non-trapping.
We denote $y(t) = y(t, x_0, \x_0)$, $\y(t) = \y(t, x_0, \x_0)$. 
We note that
\[
\lim_{j\to\infty}\y(t,x_0,\x_0)=\y_+\neq 0,\,\, \lim_{t\to \infty} \pa_{\x}p_m(y(t),\y(t)) = v_+\neq 0,
\]
exist by Corollary \ref{asymmom}. Moreover, there exist $M_1, M_2>0$ such that
\begin{equation}\label{cl}
\begin{aligned}
&|y(t)/t-v_+|,\,\, |\y(t)-\y_+|=O(\jap{t}^{-\m})\quad\text{as } t\to\infty,\\
& M_1\leq|\y(t)|\leq M_2,\quad t\geq 0.
\end{aligned}
\end{equation}

We denote $B(r,s,z,\z) = \{(x,\x)\in \re^{2n}| |z-x| < r, |\z -\x| < s\}\subset \re^{2n}$. 
In order to prove Proposition \ref{singprop}, it suffices to prove the following theorem.
We set an $h$-dependent metric $g_h$ by
\[
g_h = dx^2/\jap{x}^2+h^{2/(m-1)}d\x^2.
\]

\begin{thm}\label{sing}
There exist $\g_{h}\in C_c^\infty(\re^{2n})$ and $\f_{h,t} \in C^{\infty}(\re_{\geq 0},C_c^{\infty}(\re^{2n}))$ such that $F(h,t) = \Op(\f_{h,t})$ and:
\begin{enumerate}
\item $F(h,0) =|\Op(\g_{h})|^2\;\; \text{with}\;\; \g_{h}(x_0,\x_0) \geq 1$.
\item $\f_{h,t}$ satisfies
\[
\supp \f_{h,t}\subset B(4h^{-1}t\d_1,4h^{-1/(m-1)}\d_2,h^{-1}tv_+,h^{-\frac{1}{m-1}}\y_+)
\]
modulo $S(h^{\infty},g_h)$ if $t/h$ is sufficiently large.
\item For any $\a,\b\in \mathbb{N}_{\geq0}^n$, there exists $C_{\a\b}>0$ such that
\[
|\pa_x^{\a}\pa_{\x}^{\b}\f_{h,t}(x,\x)|\leq C_{\a\b}\jap{t} h^{(|\b|+1)/(m-1)-1} \jap{x}^{-|\a|}.
\]
\item There exists a family of bounded operator $R(h,t)$ in $L^2(\re^n)$ such that
\[
\frac{\pa F}{\pa t} + i[P, F] \geq -R(h,t),
\]
where $\sup_{\geq 0} \jap{t}^{-1}\|R(h,t)\|_{L^2\rightarrow L^2} = O(h^{\infty})$.
\end{enumerate}
\end{thm}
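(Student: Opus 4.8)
The plan is to adapt the time-global escaping-function construction of Nakamura~\cite[Lemma~9]{N} to the present higher-order long-range setting, feeding in the classical non-trapping estimates of Appendix~\ref{appa}. First I would rescale: since $p_m$ is homogeneous of degree $m$ in $\x$ and the Hamilton flow obeys $(y(t,x,\l\x),\y(t,x,\l\x))=(y(\l^{m-1}t,x,\x),\l\y(\l^{m-1}t,x,\x))$, substituting $\x=h^{-1/(m-1)}\z$ reparametrises the flow time by $t\mapsto h^{-1}t$, turns $g_h$ into the fixed metric $dx^2/\jap{x}^2+d\z^2$, and presents $\Op(p)$, up to the scalar $h^{-m/(m-1)}$, as an operator with symbol $p_m(x,\z)+\hbar\tilde V_{m-1}(x,\z)+\cdots$, where $\hbar=h^{1/(m-1)}$ and $|\tilde V_{m-1}|\lesssim\jap{x}^{-\m}$ on the region $|\z|\sim 1$ of interest. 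It is then enough to construct a time-dependent symbol $\psi_{\hbar,t}$ which at $t=0$ equals the composed square $\gamma_h\#\gamma_h$ of the cut-off $\gamma_h$ normalised as in (1), which for $t>0$ is supported in a tube of the stated radii about the rescaled trajectory $(h^{-1}t\,v_+,\y_+)$, and which solves $\pa_t\psi+\{p,\psi\}=0$ modulo $O(\hbar^\infty)$; this last point is achieved by a WKB-type expansion $\psi_{\hbar,t}=\sum_j\hbar^j\psi_t^{(j)}$, with $\psi_t^{(0)}$ the flow-out of the leading cut-off and the $\psi_t^{(j)}$ obtained by Duhamel along the flow, followed by a Borel resummation.

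The geometric input is where Assumption~\ref{assnull} enters. The basepoint $(x_0,\x_0)\in p_m^{-1}(0)$, $\x_0\neq 0$, is forward non-trapping, and by Appendix~\ref{appa} together with \eqref{cl} the trajectory $(y(t),\y(t))$ escapes linearly with $y(t)/t\to v_+\neq 0$ and $\y(t)\to\y_+\neq 0$, with errors $O(\jap{t}^{-\m})$; moreover, linearising the flow and using the integrable decay $|\pa_x^2 p_m|\lesssim\jap{x}^{-2-\m}|\x|^{m-1}$ along the trajectory, the derivatives of the flow with respect to the initial data grow at most polynomially in $t$, uniformly for initial points in a fixed neighbourhood of $(x_0,\x_0)$. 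Consequently the flow-out of a small cut-off stays inside a tube of $x$-radius $\lesssim t$ and $\z$-radius $\lesssim 1$, which after undoing the rescaling gives the support statement (2); the same bounds keep $\psi_{\hbar,t}$ in a symbol class whose seminorms are $O(\jap{t})$ times a fixed power of $\hbar$, giving the symbol bounds (3), the factor $\jap{t}$ reflecting the linear spreading of the tube, while (1) is built in by construction.

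Finally, setting $F(h,t)=\Op(\psi_{\hbar,t})$ back in the original variables, the symbol calculus gives $\tfrac{\pa F}{\pa t}+i[P,F]=\Op\bigpare{\pa_t\psi+\{p,\psi\}}+(\text{remainder})$, where the first symbol is $O(\hbar^\infty)$ by construction; since on the tube the Planck function of the rescaled calculus is $\jap{x}^{-1}\sim h\,t^{-1}$ and the subprincipal part of the rescaled Hamiltonian carries explicit powers of $\hbar$, the full asymptotic expansion of the commutator sums up with an error that is $O(h^\infty)$ uniformly on $\jap{t}^{-1}$-weighted scales; the Calder\'on--Vaillancourt estimate, or the sharp G\aa rding inequality applied to this small remainder, then gives (4). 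The main obstacle is precisely this time-global step: keeping the flow-out and all the WKB corrections in a fixed symbol class with only polynomial-in-$t$ growth of seminorms for \emph{every} $t\geq 0$. This rests entirely on the quantitative non-trapping estimates of Appendix~\ref{appa}, and is the only place where Assumption~\ref{assnull} is used.
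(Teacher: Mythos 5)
Your overall framing (rescale $\x=h^{-1/(m-1)}\z$, work with tubes around the escaping null trajectory, iterate, Borel-resum) matches the shape of the argument, but the central construction you propose is a different one from the paper's, and I believe it breaks at the key step. You aim for a transport \emph{equation}: a WKB/Duhamel parametrix $\psi_{\hbar,t}$ with $\pa_t\psi+\{p,\psi\}=O(\hbar^\infty)$, starting from $\psi^{(0)}_t$ the flow-out of a fixed cut-off. The paper instead builds, in Lemma~\ref{sym}, an explicit \emph{escape function} $\g_0(t,x,\x)=\Psi\bigl(|x-y(t)|/(\d_1\jap{t})\bigr)\Psi\bigl(|\x-\y(t)|/\c(t)\bigr)$ that satisfies only the one-sided transport \emph{inequality} $\pa_t\g+\{p_m,\g\}\geq0$. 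Two design choices in $\g_0$ do the work and are absent in your flow-out: (a) the $x$-radius $\d_1\jap{t}$ grows with $t$ by fiat, so that $|\pa_x^\a\g_0|\lesssim\jap{t}^{-|\a|}\sim\jap{x}^{-|\a|}$; the flow-out $\g_0\circ\Phi^{-t}$ of a fixed-size cut-off does \emph{not} have this property (already for free motion $\pa_x(\g_0\circ\Phi^{-t})=(\pa_x\g_0)\circ\Phi^{-t}\sim 1$, not $\sim 1/t$), so $\psi^{(0)}_t$ fails the symbol bounds needed for (iii) and for the symbolic calculus to gain $\hbar$ per step; (b) the momentum radius $\c(t)=\d_2-C_1\jap{t}^{-\m}$ shrinks at a rate whose derivative $\sim t\jap{t}^{-2-\m}$ exactly dominates the long-range error $|\pa_x p_m(y,\y)-\pa_x p_m(x,\x)|\lesssim\jap{t}^{-1-\m}$; this is what makes $A_1\Psi\Psi'\geq0$ and hence $\pa_t\g_0+\{p_m,\g_0\}\geq0$ hold for \emph{all} $\m>0$. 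A Duhamel correction $\psi^{(1)}_t=-\int_0^t\{V,\psi^{(0)}_s\}\circ\Phi^{-(t-s)}\,ds$ trying to cancel the $O(\hbar)$ error instead requires that the error be time-integrable along the flow; since $\{V,\psi^{(0)}_s\}\sim\jap{x}^{-\m}\sim (s/h)^{-\m}$, this diverges (or grows like $t^{1-\m}$) for $0<\m\leq1$, i.e., precisely for long-range perturbations, and the successive terms $\psi^{(k)}$ grow like $t^{k(1-\m)}$, which eventually exceeds the $\jap{t}$ budget in (iii)--(iv).

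The paper's iteration is also structurally different from yours: rather than Duhamel, it adds $\f_{k,h,t}=h^{(k-m+1)/(m-1)}\,t\,C_k\,\g_{k,h,t}\#\g_{k,h,t}$ with \emph{enlarged} tube radii $\l_k\d_j$, $1=\l_0<\l_1<\cdots<2$, and uses the extra factor of $t$: since $\pa_t(t|\g_{k,h,t}|^2)=t\bigl(\pa_t|\g_{k,h,t}|^2+\{p_m,|\g_{k,h,t}|^2\}\bigr)+|\g_{k,h,t}|^2$ and the first bracket is $\geq0$ by Lemma~\ref{sym}(iv), the term $|\g_{k,h,t}|^2$ produces a manifestly nonnegative symbol that dominates the previous remainder $r_{k-1,h,t}$ on its support (inequality \eqref{gsupp}). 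Each step then loses only the sharp G\aa rding cost, gaining one power of $h^{1/(m-1)}$. This ``harvest positivity by enlarging the tube and multiplying by $t$'' mechanism is the real engine behind (iv); nothing in your proposal supplies an analogue. I would therefore recommend abandoning the exact-parametrix/flow-out plan and instead writing down $\g_0$ explicitly with the spreading $x$-radius and the shrinking $\x$-radius $\c(t)$, proving the transport \emph{inequality}, and then running the enlarged-tube/$t$-factor iteration as in Lemma~\ref{op} and the proof of Theorem~\ref{sing}.
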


The proof of Theorem \ref{sing} is based on the fact that any classical trajectory of $H_p$ behave as straight lines even if $p$ is not elliptic. 
We follow the argument in \cite{N}.

\begin{lem}\label{sym}
There exist constants $\d_1, \d_2 >0$ with $|\y_+|>4\d_1$ such that the following holds:

There exists a smooth function $\g \in C^{\infty}(\re_{\geq0}, C_c^{\infty}(\re^{2n}))$ such that 
\begin{enumerate}
\item $\g \geq 0$, and $\g(0, x_0, \x_0) \geq 1$.
\item $\supp \g(t, \cdot, \cdot) \subset B(2t\d_1,2\d_2,tv_+,\y_+)$ for $t\geq T_0$, where $T_0>0$ depends only on $(x_0,\x_0)$, $p_m$ and $\d_1$.
\item For any $\a,\b \in \mathbb{N}^n$, there exists $C_{\a\b}>0$ such that 
\[
|\pa_x^{\a}\pa_{\x}^{\b}\g(t,x,\x)|\leq C_{\a\b}\jap{x}^{-|\a|},\,\,|\pa_x^{\a}\pa_{\x}^{\b}\pa_t\g(t,x,\x)|\leq C_{\a\b}\jap{x}^{-1-|\a|}
\]
for $t\geq 0$ and $x,\x \in \re^n$.
\item $\g$ satisfies
\[
\biggpare{\frac{\pa \g}{\pa t} + \{p_m, \g \}}(t, x,\x) \geq 0
\]
for $t\geq 0$, $x,\x \in \re^n$.
\end{enumerate}
\end{lem}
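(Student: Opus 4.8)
The plan is to construct $\g$ explicitly as a function of a single "escaping" scalar observable built from the classical flow, in the spirit of Nakamura~\cite{N}. Since $(x_0,\x_0)\in p_m^{-1}(0)$ with $\x_0\neq 0$ is forward non-trapping, Corollary~\ref{asymmom} and \eqref{cl} give us that $y(t)/t\to v_+\neq 0$ and $\y(t)\to\y_+\neq 0$ with polynomial rate $O(\jap{t}^{-\m})$, and that $M_1\leq|\y(t)|\leq M_2$ on $t\geq 0$. The key idea is that near the trajectory the function $\phi(x,\x):=\hat x\cdot\hat v(\x)$ (or an analogous angular/position observable) is monotone along the flow once $|x|$ is large — this is exactly the content of the Mourre-type Lemma~\ref{mourre}, which gives $H_{p_m}^2(|x|^2)\geq M|\x|^{2(m-1)}>0$ for $|x|>R_0$. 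I would first fix $\d_1,\d_2>0$ small (with $|\y_+|>4\d_1$) so that the tube $B(2t\d_1,2\d_2,tv_+,\y_+)$ stays inside the region $|x|>R_0$, $|\x|\sim|\y_+|$ for $t\geq T_0$, and so that on this tube all the structural estimates of Assumption~\ref{assa} and Lemma~\ref{mourre} are uniformly valid.

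Next I would build the symbol. Choose cutoff functions in the "transverse" variables measuring the distance of $(x,\x)$ from the ray $t\mapsto(tv_+,\y_+)$: roughly $\psi\bigpare{(|x-tv_+|^2)/(\d_1 t)^2}$ type factors for the position and $\psi\bigpare{|\x-\y_+|^2/\d_2^2}$ for the momentum, using the cutoff $\i$ from Section~\ref{global}. One then composes with a factor depending on a propagated "radial" variable $\sigma(t,x,\x)$ — for instance a regularized version of $|x|-ct$ or of $\hat x\cdot\hat v(\x)$ — chosen so that $\pa_t\sigma+\{p_m,\sigma\}\geq c_0>0$ on the tube, which follows from Lemma~\ref{mourre} together with the bound $|\pa_x p_m|\leq C\jap{x}^{-1-\m}|\x|^{m-1}$ controlling the momentum drift. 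Composing these with a suitable nonnegative nonincreasing function of $\sigma$ yields the sign condition (iv): the $\pa_t\g$ term plus the part of $\{p_m,\g\}$ hitting the $\sigma$-factor has the good sign, and the part hitting the transverse cutoffs is controlled by making the transverse width grow fast enough in $t$ (this is why the support radius $2t\d_1$ grows linearly) so that the flow cannot exit the tube — here one uses that transverse spreading of nearby trajectories is at most linear, which is precisely where Appendix~\ref{appa}'s energy and straight-line estimates enter. The normalization $\g(0,x_0,\x_0)\geq1$ is imposed by hand at $t=0$ by an extra bump, after checking consistency with the support and sign conditions at small $t$ (for $0\leq t\leq T_0$ one only needs $\pa_t\g+\{p_m,\g\}\geq0$, achievable by slowing the growth there).

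The derivative bounds (iii) are then a routine consequence of the explicit construction: each $x$-derivative lands on a cutoff whose argument has $x$-gradient of size $O(\jap{x}^{-1})$ on the tube (since $|x|\sim t$ there), giving the $\jap{x}^{-|\a|}$ gain, and each $\pa_t$ gains an extra $\jap{x}^{-1}\sim\jap{t}^{-1}$ because the tube parameters vary slowly in $t$; the $\x$-derivatives are harmless since $\x$ ranges over a bounded set. The main obstacle is step (iv): one must simultaneously arrange that the chosen scalar $\sigma$ is genuinely increasing along the flow \emph{throughout} the tube (not merely on the central trajectory) and that the loss coming from differentiating the transverse cutoffs is dominated by the gain from the $\sigma$-factor. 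This is a balancing act between the width growth rate $\d_1$, the Mourre constant $M$, and the long-range exponent $\m$, and it is the technical heart of the construction; the homogeneity relation $(y(t,x,\l\x),\y(t,x,\l\x))=(y(\l^{m-1}t,x,\x),\l\y(\l^{m-1}t,x,\x))$ is used to reduce to $|\x|\sim1$ and keep all constants uniform.
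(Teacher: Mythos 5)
Your overall geometry is right---a cutoff supported in a tube of linearly growing $x$-width around the forward trajectory, with derivative bounds coming from $|x|\sim t$ on the support, and with the small-$t$ regime handled separately. But the mechanism you propose for obtaining the sign condition (iv) does not work as stated, and it is not what the paper does.

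You propose to take $\g$ of the form $\psi(\text{transverse})\cdot f(\sigma)$, with $\sigma$ chosen so that $\pa_t\sigma+\{p_m,\sigma\}\geq c_0>0$ and $f$ nonnegative and \emph{nonincreasing}, and you assert that the $\sigma$-factor then contributes "the good sign." But $\psi\,f'(\sigma)\bigpare{\pa_t\sigma+\{p_m,\sigma\}}\leq 0$ under those hypotheses, which is exactly the \emph{wrong} sign for (iv). You would need $f$ nondecreasing (so $\g$ turns on rather than off as $\sigma$ grows), and then you would still have to check that the transverse cutoff's contribution is itself $\geq 0$, not merely "controlled." So the source of positivity in your outline is misidentified.

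The paper does not use an auxiliary escaping scalar $\sigma$ at all. It sets $\g_0(t,x,\x)=\Psi\bigpare{|x-y(t)|/(\d_1\jap{t})}\,\Psi\bigpare{|\x-\y(t)|/\c(t)}$ with $\Psi'\leq 0$ and, crucially, a time-dependent momentum radius $\c(t)=\d_2-C_1\jap{t}^{-\m}$ that increases toward $\d_2$. The sign in (iv) then comes entirely from the fact that \emph{both} tube radii grow faster (in the rescaled arguments of $\Psi$) than trajectories drift: the $x$-argument of $\Psi$ decreases because $\d_1\jap{t}$ grows linearly while the transverse spread is controlled by the Lipschitz bound on $\pa_\x p_m$; the $\x$-argument decreases because $\c'(t)\sim C_1\jap{t}^{-1-\m}$ can be chosen to dominate the momentum drift $|\pa_x p_m(y(t),\y(t))-\pa_x p_m(x,\x)|=O(\jap{t}^{-1-\m})$. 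Your proposal keeps the momentum radius fixed at $\d_2$, which loses precisely this last competition in the long-range case $\m\leq 1$---the drift term in $A_1$ then has nothing to absorb it, so (iv) fails near the edge of the momentum cutoff. Two further differences: the paper centers the tube on the actual trajectory $(y(t),\y(t))$ rather than on the asymptotic line $(tv_+,\y_+)$ (the latter introduces an additional $O(\jap{t}^{1-\m})$ offset, manageable but a genuine extra error term), and the small-$t$ extension is done cleanly by solving a transport equation $\pa_t\g+\{p_m,\g\}=\rho(t)(\pa_t\g_0+\{p_m,\g_0\})$ backward from $t=T_{00}+1$, which automatically preserves (iv) and yields $\g(0,x_0,\x_0)\geq\g_0(0,x_0,\x_0)=1$; "slowing the growth" by hand does not obviously achieve both.
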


\begin{proof}
Let $\Psi \in C^{\infty}(\re)$ such that $0\leq \Psi \leq 1$, $\Psi'\leq0$, $\Psi =1$ for $r \leq \frac{1}{2}$, $\Psi =0$ for $r\geq1$, $\Psi(r)>0$ if $\frac{1}{2}<r<1$.
We define
\[
\g_0(t,x,\x)\coloneqq \Psi\biggpare{\frac{|x-y(t)|}{\d_1\jap{t}}}\Psi\biggpare{\frac{|\x-\y(t)|}{\c(t)}}
\]
where we set $\c(t)=\d_2-C_1\jap{t}^{-\m}$ and let $C_1>0$ be determined later. 
We set
\begin{align*}
&L(t,x,\x)=\pa_{\x}p_m(x,\x)-\pa_{\x}p_m(y(t),\y(t)),\\
&A_0(t,x,\x)=\frac{1}{\d_1\jap{t}}\biggpare{L(t,x,\x)\cdot \frac{x-y(t)}{|x-y(t)|} -\frac{t|x-y(t)|}{\jap{t}^2}},\\
&A_1(t,x,\x)=\frac{1}{\c(t)}\biggpare{-\frac{\c'(t)|\x-\y(t)|}{\c(t)}+\bigpare{\pa_xp(y(t),\y(t))-\pa_xp(x,\x)}\cdot \frac{\x-\y(t)}{|\x-\y(t)|}}.
\end{align*}
For $t>0$, we have
\begin{align}
\biggpare{\frac{\pa \g_0}{\pa t}+\{p_m, \g_0 \}}(t, x,\x)=&A_0(t,x,\x)\Psi'\biggpare{\frac{|x-y(t)|}{\d_1t}}\Psi\biggpare{\frac{|\x-\y(t)|}{\c(t)}}\label{Psi0} \\
&+A_1(t,x,\x) \Psi\biggpare{\frac{|x-y(t)|}{\d_1t}}\Psi'\biggpare{\frac{|\x-\y(t)|}{\c(t)}}.\nonumber
\end{align}
Using $|\pa_{\x}p(x,\x)-\pa_{\x}p(y(t),\y(t))|\leq C_0|\x-\y(t)|$ with a constant $C>0$, we have
\begin{align}\label{A_0}
\d_1\jap{t}A_0(t,x,\x)\leq -\frac{\d_1 t}{2\jap{t}}+C_0\c(t)\leq -\frac{\d_1 t}{2\jap{t}}+C_0\d_2-C_0C_1\jap{t}^{-\m}
\end{align}
for $(x,\x)\in \supp \Psi'(|x-y(t)|/\d_1\jap{t})\Psi(|\x-\y(t)|/\c(t))$. By Assumption \ref{assa} and $(\ref{cl})$, there exists $C,T_{00}>0$ such that for $(x,\x)\in \supp \g_0(t,x,\x)$, we have
\[
|\pa_{x}p_m(y(t),\y(t))-\pa_{x}p_m(x,\x)|\leq C\jap{t}^{-1-\m}
\]
for $t\geq T_{00}$. Here, we can choose $C, T_{00}>$ independently of $C_1$. We note that  and $\c(t)/2\leq |\x-\y(t)|$ holds on the support of $\G'(|\x-\y(t)|/\c(t))$. Using these observations, we learn 
\begin{align}
&A_1(t,x,\x)\leq -\frac{\c'(t)}{\c(t)^2}|\x-\y(t)|+\frac{C\jap{t}^{-1-\m}}{\c(t)}  \label{A_1}\\
&\quad =\frac{1}{\c(t)}\biggpare{-\frac{C_1\m t}{\jap{t}^{2+\m}}\cdot \frac{|\x-\y(t)|}{\c(t)}+\frac{C}{\jap{t}^{1+\m}}}
\leq-\frac{1}{\c(t)}\biggpare{\frac{C_1\m t}{2\jap{t}^{2+\m}}-\frac{C}{\jap{t}^{1+\m}}}
\nonumber
\end{align}
for $(x,\x)\in \supp \Psi(|x-y(t)|/\d_1\jap{t})\Psi'(|\x-\y(t)|/\c(t))$ with $t\geq T_{00}$. By (\ref{Psi0}), (\ref{A_0}) and (\ref{A_1}) with $\G'\leq 0$ and $\d_1>>\d_2$, we can select  $T_{00}>0$ and $C_1>0$ such that for $t\geq T_{00}$,
\begin{equation}\label{trans0}
\biggpare{\frac{\pa \g_0}{\pa t} + \{p_m, \g_0 \}}(t, x,\x)\geq 0.
\end{equation}

Now we define $\g(t,x,\x)$ by the solution to
\begin{align}
\biggpare{\frac{\pa \g}{\pa t} + \{p_m, \g \}}(t, x,\x)=& \rho(t)\biggpare{\frac{\pa \g_0}{\pa t} + \{p_m, \g_0 \}}(t, x,\x), \quad 0\leq t\leq T_{00}+1,\label{trans}\\
\g(T_{00}+1,x,\x)=& \g_0(T_{00}+1, x, \x),\nonumber
\end{align}
where $\rho \in C^{\infty}(\re, [0,1])$ such that $\rho(t)=1$ for $t\geq T_{00}+1$, $\rho(t) = 0$ for $t\leq T_{00}$. Then we can extend $\g$ smoothly to $t\geq T_{00} + 1$ by $ \g(t,x,\x) = \g_0(t, x, \x)$ for $t \geq T_{00}+1$. For $(x,\x)\in \re^{2n}$, by using $\rho(t)\leq 1$, we obtain
\begin{align*}
\frac{d\g}{dt}(t,y(t,x,\x),\y(t,x,\x))&\leq \frac{d\g_0}{dt}(t,y(t,x,\x),\y(t,x,\x)).
\end{align*}
Let $0\leq s\leq T_{00}+1$. Integrating this inequality over $[s,T_{00}+1]$ with $(x,\x)=(x_0,\x_0)$ and using $ \g(t,x,\x) = \g_0(t, x, \x)$ with $(t, x,\x)=(T_{00}+1, y(T_{00}+1), \y(T_{00}+1))$, we have
\begin{align*}
\g(s,y(s),\y(s)) &\geq \g_0(s,y(s),\y(s))\geq 0.
\end{align*}
Substituting this inequality with $s=0$, we have $\g(0,x_0,\x_0)\geq \g_0(0,x_0,\x_0)=1$.
This implies that $\g$ satisfies (i). We set $T_0=T_{00}+1$. Now (ii) follows from (\ref{cl}) and the relation $\g(t,x,\x)=\g_0(t,x,\x)$ for $t\geq T_0$. (iv) follows from (\ref{trans0}) and (\ref{trans}). Furthermore, (iii) follows from (\ref{cl}), (\ref{trans}), the relation $\g(t,x,\x)=\g_0(t,x,\x)$ for $t\geq T_0$ and the definition of $\g_0$.
\end{proof}

We set
\begin{equation}\label{gdef}
\g_{h,t}(x,\x)= \g(t/h,x,h^{\frac{1}{m-1}}\x),\quad \f_{0,h,t}(x,\x)=\g_{h,t}\#\g_{h,t}(x,\x),
\end{equation}
and $F_0(h,t) =\Op(\f_0(h,t,\cdot, \cdot))=|\Op(\g_{h,t})|^2$,
where $\#$ denotes the composition of the Weyl quantization (\cite[$(4.3.6)$]{Z} with $h=1$) and $|A|^2=A^*A$ for an operator $A$.
\begin{lem}\label{op}
\begin{enumerate}
\item $F_0(0) =|\Op(\g_{h,0})|^2\;\; \text{with}\;\; \g_{h,0}(x_0,\x_0) \geq 1$.
\item We have
\[
\supp \f_{0,h,t}\subset B\bigpare{2h^{-1}t\d_1,2h^{-\frac{1}{m-1}}\d_2,h^{-1}tv_+,h^{-\frac{1}{m-1}}\y_+}
\]
modulo $S(h^{\infty},g_h)$ if $t/h\geq T_1$.
\item For any $\a,\b \in \mathbb{N}_{\geq0}^n$, there exists $C_{\a\b}>0$ such that
\begin{align*}
&|\pa_x^{\a}\pa_{\x}^{\b}\f_{0,h,t}(x,\x)|\leq C_{\a\b} h^{\frac{|\b|}{m-1}} \jap{x}^{-|\a|},\\
&|\pa_x^{\a}\pa_{\x}^{\b}\pa_t \f_{0,h,t}(x,\x)|\leq C_{\a\b} h^{\frac{|\b|}{m-1}-1} \jap{x}^{-|\a|-1}.
\end{align*}
\item There exists $r_0(t,x,\x) \in C^{\infty} (\re_{\geq 0} \times \re^{2n})$ such that
\[
\frac{\pa}{\pa t} F_0(h,t) + i[P, F_0(h,t)] \geq -\Op(r_{0,h,t}),
\]
and $\supp r_{0,h,t}\subset \supp \f_{0,h,t}$ modulo $S\bigpare{h^{\infty}\jap{x}^{-\infty},g_h}$. Moreover, for any $\a,\b \in \mathbb{N}_{\geq0}^n$, there exists $C_{\a\b}>0$ such that
\[
|\pa_x^{\a}\pa_{\x}^{\b}r_{0,h,t}(x,\x)|\leq C_{\a\b} h^{\frac{|\b|-(m-2)}{m-1}} \jap{x}^{-|\a|-1-\m}.
\]
\end{enumerate}
\end{lem}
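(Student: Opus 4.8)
The four assertions all follow from Lemma~\ref{sym} via the parabolic rescaling built into $\g_{h,t}(x,\x)=\g(t/h,x,h^{1/(m-1)}\x)$; items (i)--(iii) are essentially bookkeeping and (iv) is the substantial point. For (i), $F_0(h,0)=\Op(\g_{h,0}\#\g_{h,0})=\Op(\g_{h,0})^*\Op(\g_{h,0})$ by definition, and $\g_{h,0}(x_0,\x_0)\geq1$ is Lemma~\ref{sym}(i). For (ii), Lemma~\ref{sym}(ii) confines $\supp\g(t/h,\cdot,\cdot)$ to $B(2(t/h)\d_1,2\d_2,(t/h)v_+,\y_+)$ once $t/h\geq T_0$; replacing the second slot by $h^{1/(m-1)}\x$ turns this into the ball stated in (ii) (with $T_1=T_0$), and $\f_{0,h,t}=\g_{h,t}\#\g_{h,t}$ is supported there modulo $S(h^\infty,g_h)$ by pseudolocality. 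For (iii), Lemma~\ref{sym}(iii) gives, uniformly in $t\geq0$, $\g_{h,t}\in S(1,g_h)$ and $\pa_t\g_{h,t}\in S(h^{-1}\jap{x}^{-1},g_h)$ (each $\pa_\x$ contributes $h^{1/(m-1)}$, the $\pa_t$ contributes $h^{-1}$, and $|\pa_x^\a\pa_\x^\b\pa_t\g|\leq C\jap{x}^{-1-|\a|}$ supplies the extra $\jap{x}^{-1}$); the bounds for $\f_{0,h,t}$ and $\pa_t\f_{0,h,t}$ then follow from the product rule for $\#$.

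For (iv), write $G=\Op(\g_{h,t})$. Since $P$ is formally self-adjoint on $\mathcal{S}(\re^n)$, one has the operator identity
\[
\pa_t(G^*G)+i[P,G^*G]=W^*G+G^*W,\qquad W\coloneqq\pa_t G+i[P,G],
\]
all terms being bounded and self-adjoint (the symbols involved have compact $\x$-support for each $h$). By the Weyl calculus the symbol of $W$ is $w=\pa_t\g_{h,t}+\{p,\g_{h,t}\}+\rho_1$, where $\rho_1$ is the subleading part of the commutator expansion of $i[\Op(p),\Op(\g_{h,t})]$. Decomposing $p=p_m+V$ with $V=p-p_m\in S^{m-1,-\m}$ and using that $p_m$ is homogeneous of degree $m$ in $\x$, one obtains the scaling identity
\[
\pa_t\g_{h,t}(x,\x)+\{p_m,\g_{h,t}\}(x,\x)
=h^{-1}\bigpare{\pa_t\g+\{p_m,\g\}}\bigpare{t/h,\,x,\,h^{1/(m-1)}\x},
\]
whose right-hand side, call it $\theta_{h,t}(x,\x)$, is $\geq0$ by Lemma~\ref{sym}(iv); thus $w=\theta_{h,t}+\{V,\g_{h,t}\}+\rho_1$. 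On $\supp\g_{h,t}$ the construction keeps $|\x|$ comparable to $h^{-1/(m-1)}$ uniformly in $t\geq0$ (since $|\y(t)|$ stays bounded above and below along the bicharacteristic), and counting orders in $g_h$, whose Planck function is $h_{g_h}=\jap{x}^{-1}h^{1/(m-1)}$, gives $\theta_{h,t}\in S(h^{-1}\jap{x}^{-1},g_h)$, $\{V,\g_{h,t}\}\in S(h^{-(m-2)/(m-1)}\jap{x}^{-1-\m},g_h)$ and $\rho_1\in S(h^{-(m-3)/(m-1)}\jap{x}^{-3},g_h)$, so $w\in S(h^{-1}\jap{x}^{-1},g_h)$.

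Now $W^*G+G^*W=\Op(w\#\g_{h,t}+\g_{h,t}\#w)$, and because the first-order term of the Weyl expansion is antisymmetric it cancels, leaving
\[
w\#\g_{h,t}+\g_{h,t}\#w=2\g_{h,t}\theta_{h,t}+2\g_{h,t}\bigpare{\{V,\g_{h,t}\}+\rho_1}+\rho_2
\]
with $\rho_2\in S(h^{-(m-3)/(m-1)}\jap{x}^{-3},g_h)$ (a two-derivative error, i.e.\ two powers of $h_{g_h}$). Since $\g_{h,t}\geq0$ and $\theta_{h,t}\geq0$, the symbol $2\g_{h,t}\theta_{h,t}\geq0$ lies in $S(h^{-1}\jap{x}^{-1},g_h)$, so the sharp G\aa rding inequality for $g_h$ gives $\Op(2\g_{h,t}\theta_{h,t})\geq-\Op(\rho_3)$ with $\rho_3\in S(h_{g_h}\cdot h^{-1}\jap{x}^{-1},g_h)=S(h^{-(m-2)/(m-1)}\jap{x}^{-2},g_h)$. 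Collecting, $\pa_t F_0(h,t)+i[P,F_0(h,t)]\geq-\Op(r_{0,h,t})$ with $r_{0,h,t}=\rho_3-2\g_{h,t}(\{V,\g_{h,t}\}+\rho_1)-\rho_2$, supported in $\supp\f_{0,h,t}$ modulo $S(h^\infty\jap{x}^{-\infty},g_h)$ by pseudolocality. Each summand belongs to $S(h^{-(m-2)/(m-1)}\jap{x}^{-1-\m},g_h)$: the $\rho_1,\rho_2$ terms are better in $h$ by a factor $h^{1/(m-1)}$, while $\jap{x}^{-2}$ and $\jap{x}^{-3}$ dominate $\jap{x}^{-1-\m}$ once $\m$ is shrunk so that $0<\m\leq1$ (harmless, since the coefficients decay with every smaller exponent as well) --- this is exactly the symbol estimate asserted in (iv). The main obstacle is precisely this order-bookkeeping in the nonstandard metric $g_h$: verifying the two scaling identities and then controlling every remainder (the commutator remainder $\rho_1$, the composition remainder $\rho_2$, the sharp G\aa rding remainder $\rho_3$) within the single symbol class allotted to $r_{0,h,t}$, uniformly in $t\geq0$.
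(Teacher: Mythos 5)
Your proof is correct and follows essentially the same route as the paper: items (i)--(iii) are bookkeeping from Lemma~\ref{sym} and the estimates \eqref{cl} (which give $|x|\sim t/h$ and $|\x|\sim h^{-1/(m-1)}$ on the support, converting $\jap{\x}^{m-1}$ into $h^{-1}$), and for (iv) both arguments exploit the homogeneity of $p_m$ to reduce positivity of the transport term to Lemma~\ref{sym}(iv), split off $V=p-p_m\in S^{m-1,-\m}$ as the sub-principal error, and apply the sharp G\aa rding inequality in the metric $g_h$ to produce $r_{0,h,t}$. The only organizational difference is your factorization $\pa_t(G^*G)+i[P,G^*G]=W^*G+G^*W$ with G\aa rding applied to $\Op(2\g_{h,t}\theta_{h,t})$, versus the paper's direct treatment of the symbol $\pa_t|\g_{h,t}|^2+\{p_m,|\g_{h,t}|^2\}\geq0$; and both rely on the harmless reduction to $\m\leq1$ (implicit in the paper, which you make explicit) so that the $\jap{x}^{-2}$ decay of the G\aa rding remainder fits inside the stated $\jap{x}^{-1-\m}$.
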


\begin{proof}
Propeties (i)--(iii) follow from (\ref{cl}) and Lemma \ref{sym}. We prove (iv). 
Since $|x|\sim t/h$ holds on $\supp \g_{h,t}$, we learn
$\pa_t\f_{0,h,t}(\cdot, \cdot)\in S(h^{-1}\jap{x}^{-1} ,g_h)$. 
Moreover, we have
$[P, F_0(h,t)]\in \Op S(\jap{x}^{-1}\jap{\x}^{m-1},  g_h)$. 
By its support property, $[P, F_0(h,t)] \in \Op S(h^{-1}\jap{x}^{-1},  g_h)$ follows. We obtain
\[
\frac{\pa}{\pa t} |\g_{h,t}(h,t,x,\x)|^2 + \{p_m, |\g_{h,t}(\cdot ,\cdot)|^2\}(x,\x)\geq 0
\]
by Lemma \ref{sym} $(iv)$. We note $p=p_m+V$ with $V\in S^{m-1, -\m}$ and 
\[
[V, F_0(h,t)]\in \Op S\bigpare{h^{-\frac{m-2}{m-1}}\jap{x}^{-1-\m}, g_h}.
\]
By the sharp G\aa rding inequality, there exists 
$r_{0,h,t} \in S\bigpare{h^{\frac{-(m-2)}{m-1}} \jap{x}^{-1-\m},g_h}$ such that (iv) holds.
\end{proof}

\begin{proof}[Proof of Theorem \ref{sing}] We choose $\l_0,\l_1,\l_2, . . . \in[1,2)$ such that
\[
1 = \l_0 <\l_1<\l_2<\cdot \cdot \cdot <2,
\]
and take $\g_{k,h,t}(x,\x)$ as $\g_{h,t}(x,\x)$ and $T_k$ as $T_0$ with $\d_j$ replaced by $\l_k\d_j$ in Lemma \ref{sym} and \eqref{gdef}. By the choice of $\Psi$, we note
\begin{equation}\label{gsupp}
\g_{k+1,h,t}(x,\x)\geq L_k
\end{equation} 
on $\supp \g_{k,h,t}(\cdot,\cdot)$ for some $L_k>0$. For $k\geq 1$, set
\[
\f_{k,h,t}(x,\x) = h^{\frac{k-m+1}{m-1}}tC_k\g_{k,h,t}\# \g_{k,h,t} \in S(h^{\frac{k-m+1}{m-1}}t,g_h)
\]
where $C_k>0$ is determined later.
By Lemma \ref{op} (iv), we can write $r_{0,h,t} = r_{01,h,t} + r_{02,h,t}$, where 
\begin{align}\label{r_01}
r_{01,h,t}\in S\bigpare{h^{\frac{-(m-2)}{m-1}}\jap{x}^{-1-\m} ,g_h}
\end{align}
satisfies $\supp r_{01,h,t}(t,\cdot,\cdot) \subset \supp \f_0(t,\cdot,\cdot)$ and $r_{02,h,t} \in S(h^{\infty}\jap{x}^{-\infty},g_h)$. By (\ref{gsupp}), we can find $C_1>0$ such that
\[
r_{01,h,t}(x,\x) \leq C_1h^{\frac{-m+2}{m-1}}|\g_{1,h,t}(x,\x)|^2.
\]
This inequality with Lemma \ref{sym} (iv) implies
\begin{align}
&C_1h^{\frac{-m+2}{m-1}}\biggpare{\frac{\pa}{\pa t} (t|\g_{1,h,t}|^2) +t\{p_m,|\g_{1,h,t}|^2\}}(x,\x)\label{gain2} \\
&\quad = C_1h^{\frac{-m+2}{m-1}}t\biggpare{\frac{\pa}{\pa t} |\g_{1,h,t}|^2+\{p_m,|\g_{1,h,t}|^2\}}(x,\x) + C_1h^{\frac{-m+2}{m-1}}|\g_{1,h,t}(x,\x)|^2\nonumber\\
&\quad \geq r_{01,h,t}(x,\x).\nonumber
\end{align}
Taking $M_k=\max(T_k, ||v_+|-2\l_k\d_1|)>0$, we have
\begin{equation}\label{trange}
t\leq M_kh\jap{x},\,\, \text{for}\,\, (t,x,\x)\in \supp \g_{k,h,t}
\end{equation}
by Lemma \ref{op} (ii). Lemma \ref{op} (iii) with \eqref{trange} implies
\begin{equation}\label{gasym}
h^{\frac{-m+2}{m-1}}t\biggpare{\frac{\pa |\g_{1,h,t}|^2}{\pa t}+\{p_m,|\g_{1,h,t}|^2\}}\in S\bigpare{h^{\frac{-m+2}{m-1}}, g_h)}.
\end{equation}
By \eqref{r_01}, \eqref{gain2} and \eqref{gasym}, it follows that the both sides in \eqref{gain2} belong to $S(h^{\frac{-m+2}{m-1}}, dx^2/\jap{x}^2+h^{2/(m-1)}d\x^2).$
The sharp G\aa rding inequality implies that there exists 
\[
r_{1,h,t} \in S(h^{\frac{-m+3}{m-1}}\jap{x}^{-1}, g_h)
\]
 which is supported in 
$\supp \f_{1,h,t}$ modulo $S(h^{\infty}\jap{x}^{-\infty},g_h)$ such that 
\[
\frac{\pa}{\pa t} \Op(\f_{1,h,t}) + i[P, \Op(\f_{1,h,t})] \geq \Op(r_{0,h,t}) - \Op(r_{1,h,t}).
\]

We set $F_1(h,t) = F_0(h,t) +\Op(\f_{1,h,t})$, then we have
\[
\frac{\pa}{\pa t} F_1(h,t) + i[P,F_1(h,t)] \geq  - \Op(r_{1,t,h}).
\]
Iterating the above argument, we can construct $C_k>0$, $F_k(t)$ and 
\[
r_{k,h,t} \in S\bigpare{h^{\frac{k-m+2}{m-1}}\jap{x}^{-1},g_h}
\]
such that $\supp r_{k,h,t}\subset \supp \f_{k,h,t}(\cdot,\cdot)$ modulo $S(h^{\infty}\jap{x}^{-\infty},g_h)$ and
\begin{align*} 
&\frac{\pa}{\pa t} F_k(h,t) + i[P,F_k(h,t)]  \geq   -\Op( r_k(h,t,\cdot,\cdot)),\\
&F_{k+1}(h,t) = F_{k}(h,t) + \Op(\f_{k,h,t}),\\
&r_{k,h,t}(x,\x) \leq C_{k+1}h^{\frac{k-m+2}{m-1}} \g_{k+1,h,t}(x,\x)\quad\text{modulo } S(h^{\infty}\jap{x}^{-\infty},g_h).
\end{align*}
By the Borel's Theorem (see \cite{Z} Theorem 4.15), we can define
\[
\f_{h,t}(x,\x) \sim \sum_{n=0}^{\infty} \f_{j,h,t}(x,\x)
\]
and
$F(h,t) = \Op(\f_{h,t})$. 
Then, $F(h,t)$ satisfies the properties in Theorem \ref{sing}. This completes the proof of 
Theorem \ref{sing}.
\end{proof}

\section{Compactly supported perturbation}\label{appcpt}

The proof is considerably simpler if the perturbation is compactly supported, since we do not need 
the argument of Subsection~\ref{global}. Here we discuss the simpler argument for this case. 
We assume that there exists $R>0$ such that $\supp q\subset B_R(0)\times \re^n$, 
where $B_R(0)=\{x\in \re^n\mid |x|<R\}$. We note still the local regularity argument 
(Subsection~\ref{local} and Appendices~\ref{appa}, \ref{appb}). 
Let $\g\in C^{\infty}(\re^n)$ be a real-valued function such that $\g=1$ on $\re^n\setminus B_{R+1}(0)$ and $\g=0$ on $B_R(0)$.

\begin{prop}\label{cptsupp}
Let $k\geq 0$ and $u\in L^2(\re^n) \cap H^{k+m-1}_\mathrm{loc}(\re^n)$ be a distributional solution to $(P - i) u = 0$. Then we have $\g u \in H^k$. In particular, $u\in H^k$ follows.
\end{prop}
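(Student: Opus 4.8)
The plan is to establish the estimate $\g u\in H^k$ by running the commutator/positive-commutator argument for the model operator $P_0=p_0(D_x)$ outside the obstacle, exploiting the fact that on the support of $\g$ the perturbation $Q=\Op(q)$ is absent except through lower-order commutator terms whose symbols are supported near $B_{R+1}(0)$, where $u$ is already known to be in $H^{k+m-1}_{\mathrm{loc}}$ by Proposition \ref{locre}. First I would choose a real-valued weight $\Phi=\Phi(x,\x)$ modeled on the escaping function $b^\d$ of Subsection~\ref{global}, but now only supported in $|x|\geq R+1$ and in a conic region where $p_0$ is of real principal type (i.e.\ $|\x|$ bounded below), with the property that along the flow of $H_{p_0}$ the quantity $\Phi$ is increasing modulo a controllable error. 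Since $p_0$ has constant coefficients, its Hamilton flow is explicit — straight lines $x(t)=x_0+t\,\pa_\x p_0(\x_0)$, $\x(t)=\x_0$ — so the non-trapping of null bicharacteristics is automatic and the construction of $\Phi$ is elementary (one may even reuse the functions $\rho_\pm^\d(\hat x\cdot\hat v(\x))$ from Subsection~\ref{global}).

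The key steps, in order, are: (1) Set $u(t,x)=e^{-t}u(x)$ as in the proof of Proposition \ref{locre}, so that $(i\pa_t-P)u(t)=0$ and $\|u(t)\|_{L^2}\leq e^{-t}\|u\|_{L^2}$. (2) Build a time-dependent family $G(h,t)=\Op(g_{h,t})$ with $g_{h,0}\geq c>0$ microlocally at a given point $(x_1,\x_1)$ with $|x_1|>R+1$, $\x_1\neq 0$, satisfying $\pa_t g_{h,t}+\{p_0,g_{h,t}\}\geq 0$ modulo errors, together with the crucial support property that $g_{h,t}$ stays in $\{|x|\geq R+1\}$ so that $[Q,\Op(g_{h,t})]$ has symbol supported in $\supp\pa_x q\subset B_{R+1}(0)$, hence is smoothing against $u$ up to the known local regularity. (3) Run the energy estimate
\[
\jap{u,G(h,0)u}=\jap{u(t),G(h,t)u(t)}-\int_0^t\Bigjap{u(s),\Bigpare{\tfrac{\pa G}{\pa s}+i[P,G]}u(s)}\,ds,
\]
bounding the integrand from below using $\tfrac{\pa G}{\pa t}+i[P_0,G]\geq -R(h,t)$ with $\|R(h,t)\|=O(h^\infty)$ and $i[Q,G(h,t)]u(s)$ absorbed via $\g u\in H^{k+m-1}_{\mathrm{loc}}$, then send $t=h^{-1}$ to deduce $\|\Op(g_{h,0})u\|=O(h^{N})$ for all $N$, i.e.\ $u$ is microlocally smooth at $(x_1,\x_1)$. (4) Since $(x_1,\x_1)$ with $|x_1|>R+1$ is arbitrary, combine with a standard elliptic argument in $\x$ (using $C^{-1}|\x|^{m-1}\leq|\pa_\x p_0|$ handles the characteristic directions while the elliptic region is immediate) and the local regularity near the obstacle to patch together $\g u\in H^k$; the last sentence $u\in H^k$ then follows since $u=\g u+(1-\g)u$ and $(1-\g)u$ is compactly supported in the region where $u\in H^{k+m-1}_{\mathrm{loc}}\subset H^k_{\mathrm{loc}}$.

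The main obstacle I expect is step (2): arranging the escaping function $g_{h,t}$ so that its support \emph{remains} in $\{|x|\geq R+1\}$ for all $t\geq 0$ while still being a genuine escaping function for $H_{p_0}$ and starting positive at an arbitrary $(x_1,\x_1)$ outside the obstacle. For backward-incoming directions the bicharacteristic through $(x_1,\x_1)$ may enter $|x|<R+1$ in the past, so one must either restrict attention to the outgoing part of the flow and exploit that $P$ commutes with complex conjugation in the asymptotically-constant-coefficient case — or, more robustly, observe that one only needs \emph{global} microlocal smoothness of $u$, which by Proposition \ref{locre} is already in hand locally; thus it suffices to propagate smoothness from the local region outward along $H_{p_0}$, which is exactly the forward construction of Appendix~\ref{appb} applied to $p_0$. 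The remaining steps are routine pseudodifferential calculus, with the support separation between $\supp\pa q$ and $\supp g_{h,t}$ doing the work that Subsection~\ref{global} had to do by hand via the weighted radial-point estimate.
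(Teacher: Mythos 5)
Your proposal takes a far more elaborate route than the paper and, more to the point, contains a genuine gap. Even granting that your steps (1)--(3) can be carried out, what they deliver is microlocal $C^\infty$-smoothness of $u$ at each fixed $(x_1,\x_1)$ with $|x_1|>R+1$ --- but that information is already available from Proposition~\ref{locre}, which gives $u\in C^\infty(\re^n)$. The proposition to be proved is a \emph{global} weighted-$L^2$ statement, $\g u\in H^k$, i.e.\ $\int\jap{\x}^{2k}|\widehat{\g u}(\x)|^2\,d\x<\infty$, and microlocal smoothness at every point does not imply this: the constants in an escaping-function estimate a priori degenerate as the base point $(x_1,\x_1)$ runs off to infinity, and one would still have to sum the pointwise bounds over a phase-space decomposition. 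Your step (4), ``patch together $\g u\in H^k$,'' is therefore not a deduction; it silently re-asserts the global bound that is the whole content of the proposition. Making your route quantitative and uniform in the base point would, in effect, reconstruct the radial-point argument of Subsection~\ref{global}, which this appendix is specifically designed to bypass.

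The paper's actual argument is short, static, and uses a different mechanism. Since $P$ is a differential (hence local) operator and the coefficients of $Q=P-P_0$ are supported in $B_R(0)$ while $\g$ vanishes there, one has the exact identity $p_0(D)(\g u)=P(\g u)=i\g u+Ku$, where $K=[P,\g]$ is a differential operator of order $m-1$ with compactly supported coefficients, so $Ku\in H^k$ by the local regularity hypothesis. The key point is that $L=p_0(D)$ is a constant-coefficient, self-adjoint Fourier multiplier, so it commutes \emph{exactly} with the regularizer $N_\e^{2k}=(I-\Delta)^{2k}(I-\e\Delta)^{-2k}$; consequently $\Im(N_\e^{2k}\g u,\,L\g u)_{L^2}=0$ identically. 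Pairing the identity with $N_\e^{2k}\g u$ and taking imaginary parts gives $\|N_\e^{k}\g u\|^2\leq\|N_\e^{k}\g u\|\,\|N_\e^{k}Ku\|$, hence $\|N_\e^{k}\g u\|\leq\|N_\e^{k}Ku\|$ uniformly in $\e>0$, and $\g u\in H^k$ follows by monotone convergence as $\e\to0$. No escaping function, no time dependence, no use of the non-trapping of the free flow, and no propagation estimate are needed --- the whole argument rests on the vanishing of the commutator $[p_0(D),N_\e^{2k}]$, which is exactly the structural simplification that the compactly supported perturbation buys you and that your proposal does not exploit.
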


\begin{proof}
Set $N = I - \Delta$ and $N_{\e} = (I - \Delta)(I - \e \Delta)^{-1}$ and define $L =p_0(D)$ where $\Delta$ denotes the standard Laplacian on $\re^n$. By virtue of the support property of $\g$, we compute
\begin{align*}
L(\g u) =  P(\g u) = \g Pu + [P, \g ]u
=  i\g u + Ku,
\end{align*}
where $K\coloneqq [P, \g]$ is compactly supported coefficients differential operator with order $m-1$. We note $Ku \in H^1$ since $u\in H^m_\mathrm{loc}(\re^n)$. Hence, we have
\begin{align*}
2i\Im(N_\e^{2k}(\g u), L(\g u))_{L^2}
= & 2i\Im(N_\e^{2k}(\g u), i\g u + Ku)_{L^2}\\
= & 2i\| N_\e^{k} (\g u) \|_{L^2}^2 + 2i\Im (N_\e^{2k} (\g u), Ku)_{L^2}.
\end{align*}
On the other hand, by the Plancherel theorem, we have
\[
2i\Im (N_\e^{2k} (\g u), L(\g u))_{L^2}
= (N_\e^{2k} (\g u), L(\g u))_{L^2} - (L(\g u), N_\e^{2k} (\g u))_{L^2}=0.
\]
Thus, we have
\[
\| N_\e^{k} (\g u) \|_{L^2}^2
\leq  | \Im (N_{\e}^{2k} ( \g u), Ku) | \leq  \| N_\e^{k} (\g u)\|_{L^2} \| N_\e^{k} Ku \|_{L^2} 
\]
Consequently, take $\e \to 0$  and we obtain
$\| N^{k} (\g u)\|_{L^2}\leq \| N^{k} Ku \|_{L^2}< \infty$, 
by using the monotone convergence theorem and $Ku \in H^k$. This implies $\g u \in H^k$.
\end{proof}

\begin{proof}[Proof of Proposition \ref{reg}]
Suppose that $u\in L^2(\re^n)$ satisfies $(P-i)u=0$. By Proposition \ref{locre}, we have $u\in C^{\infty}(\re^n)\subset H^{3(m-1)/2}_\mathrm{loc}(\re^n)$. By Proposition~\ref{cptsupp}, we conclude $u\in H^{(m-1)/2}\subset H^{(m-1)/2, -1/2}$.
\end{proof}


\end{document}